\newtheorem{thm}{Theorem}[section]
\newtheorem{cor}[thm]{Corollary}
\newtheorem{lem}[thm]{Lemma}
\newtheorem{prop}[thm]{Proposition}
\theoremstyle{definition}
\newtheorem{defin}[thm]{Definition}
\newtheorem{rem}[thm]{Remark}
\newtheorem{exa}[thm]{Example}
\numberwithin{equation}{section}
\numberwithin{equation}{section}
\newcommand{\BB}[1]{\mathbb{#1}}
\newcommand{\CAL}[1]{\mathcal{#1}}
\newcommand{\FRAK}[1]{\mathfrak{#1}}
\newcommand{\BS}[1]{\boldsymbol #1}
\newcommand{\BLF}[2]{\left\langle #1\,\|\,#2\right\rangle}
\newcommand{\BLFFF}[2]{\Bigl\langle #1\,\Big\|\,#2\Bigr\rangle}
\newcommand{\COMP}{\raisebox{0.3ex}{\hspace{0.3ex}%
   {$\scriptstyle{\circ}$}\hspace{0.5ex}}}  %%% composition
\newcommand{\DA}{\hbox{\hskip-.4em}\downarrow}
\newcommand{\FRAC}[2]{\frac{#1}{#2}}
\newcommand{\id}{\mbox{\upshape id}}
\newcommand{\Ker}{\mbox{\upshape Ker\,}}
\renewcommand{\mod}{\ \,\mbox{mod}\ }
\newcommand{\OL}[1]{\overline{#1}}
\newcommand{\ord}{\mbox{\upshape ord\,}}
\newcommand{\SLF}[2]{\left\langle #1\, |\, #2\right\rangle}
\newcommand{\SLFF}[2]{\bigl\langle #1\, \big|\, #2\bigr\rangle}
\newcommand{\SLFFF}[2]%
{\Bigl\langle #1\, \Big|\, #2\Bigr\rangle}
\newcommand{\SLFFFF}[2]%
{\biggl\langle #1\, \bigg|\, #2\biggr\rangle}
\newcommand{\Span}{\mbox{\upshape Span}}
\newcommand{\TAY}{\mbox{\sffamily\itshape T\,}}
\newcommand{\UA}{\hbox{\hskip-.4em}\uparrow}
\begin{document}

\baselineskip=17pt

\author[S. Izumi]{Shuzo Izumi}
\address{Research Center for Quantum Computing, 
Kindai University\\ 
Higashi-Osaka 577-8502, Japan}
\email{sizmsizm@gmail.com}

\date{\today}

\title[Spaces of polynomial functions]
{Spaces of polynomial functions of 
bounded degrees on an embedded 
manifold and their duals}

{\scriptsize
\footnote{This work is partly supported by 
JSPS KAKENHI (21540054) 
and partly done during the author's stay at Sydney University 
with the aid of JSPS Bilateral Joint Projects/Seminars (2010). 
}}

\begin{abstract}
Let $\CAL{O}(U)$ denote the algebra of holomorphic functions 
on an 
open subset $U\subset\BB{C}^n$ and 
$Z\subset\CAL{O}(U)$ its finite-dimensional vector subspace. By 
the theory of least space of de~Boor and Ron, there 
exists a projection $\TAY_{\BS{b}}$ from 
the local ring $\CAL{O}_{n,\BS{b}}$ onto 
the space $Z_{\BS{b}}$ of germs of elements of $Z$ at 
$\BS{b}$. At a general point $\BS{b}\in U$ its kernel is an 
ideal and $\TAY_{\BS{b}}$ induces a structure of 
an Artinian algebra 
on $Z_{\BS{b}}$. In particular, it holds at points where $k$-th 
jets of elements of $Z$ form a vector bundle for 
each $k\in\BB{N}$. For an embedded manifold $X\subset\BB{C}^m$, 
we introduce a 
space of higher order tangents following Bos and Calvi. In 
the case of curve, using $\TAY_{\BS{b}}$, we define the Taylor 
projector of order $d$ at a general point $\BS{a}\in X$, 
generalising results of Bos and Calvi. It is a retraction of 
$\CAL{O}_{X,\BS{a}}$ onto the set of the polynomial functions 
on $X_{\BS{a}}$ 
of degree up to $d$. Using the ideal property stated 
above, we show that the transcendency index, defined by the 
author, of the embedding of manifold $X\subset\BB{C}^m$ is not 
very high at a general point of $X$.\\

\noindent
{\bf Keywords}: 
$\bullet$ Taylor projector 
$\bullet$ one point interpolation on manifolds 
$\bullet$ least operator 
$\bullet$ higher order tangents 
%$\bullet$ annihilating system
$\bullet$ zero-estimate
$\bullet$ transcendency index\\
{\bf MSC (2000)}: 
primary 32B15, secondary 11J82, 13J07, 32E30, 41A10, 41A63
\end{abstract}
\maketitle

\section{Introduction}
\label{introduction}
The motivation of this paper is applications of least spaces of 
de~Boor and Ron and a generalisation of the theory of 
Bos and Calvi on Taylor projector on a plane algebraic curve. 
In particular, the main problem is to clarify the nature of 
``singularities of an affine embedding of a manifold" 
found by Bos and 
Calvi in the case of plane algebraic curves. 
\\

In early nineties, there was a big progress in the theory of 
multivariate polynomial interpolation 
with arbitrary interpolation nodes 
(see e.g. \cite{REFgs}, \cite{REFinterpolation}). 
Outstanding methods are application of Gr\"{o}bner basis 
(e.g. Marinari-M\"{o}ller-Mora, \cite{REFmmm}), 
use of exponential 
polynomials for evaluation at interpolation points 
(Dyn-Ron \cite{REFdr}, 
de~Boor-Ron \cite{REFbr1}) and the duality between the space of 
interpolating functions and its least space 
(de~Boor-Ron \cite{REFbr1}, \cite{REFbr2}). 
The first method is applied widely treating 
the interpolation theory 
as algebraic geometry of 0-dimensional sub-schemes. 
The second is related to systems of PDEs 
with constant coefficients. 
The author feel that the third method is also applicable 
to general problems beyond interpolation. 
This is a trial for such a direction. 

The essence of the third method above is that, 
for a finite-dimensional vector 
space 
$$
Z_{\BS{b}}\subset\BB{C}\{\BS{t}-\BS{b}\}\quad
(\BS{t}:=(t_1,\dots,t_n),\ \BS{b}:=(b_1,\dots,b_n))
$$ 
of holomorphic function germs at 
$\BS{b}$, the initial forms of its elements with respect to the 
total degree generate a dual space with respect to 
a sesquilinear form as follows. 

Let $U$ be an open subset of an affine space $\BB{C}^n$ and 
let $\CAL{O}_n(U)$ denote the ring of holomorphic functions 
on $U$. 
Take $f(\BS{t})\in\CAL{O}_n(U)$ and $\BS{b}\in U$. 
The \textit{least part} $f_{\BS{b}}\DA$ of $f$ at 
$\BS{b}$ means the non-zero homogeneous part of the lowest 
degree of the power series expansion of $f$ with respect 
to the affine 
coordinates $\BS{t}':=\BS{t}-\BS{b}$ centred at $\BS{b}$. 
Since we shall consider these least parts as elements of 
a dual space of $\CAL{O}_n(U)$, we replace 
the variable $\BS{t}'$ in the least parts 
by the corresponding Greek letter $\BS{\tau}$. 
In \S \ref{sectionLEAST}, 
this will be treated as a Schwarz distribution supported at 
$\BS{b}$. We have no need to attach the symbol $\BS{b}$ to 
$\BS{\tau}$. The polynomial $f_{\BS{b}}\DA$ may be 
considered as a homogeneous element of $\BB{C}[\BS{\tau}]$. 
For example 
$$
\bigl((t_1-b_1)^p(t_2-b_2)^q
+(t_1-b_1)^{p+1}(t_2-b_2)^q\bigr)_{\BS{b}}\DA
=\tau_1^p\tau_2^q\in\BB{C}[\BS{\tau}]
.$$
If $Z$ is a vector subspace of 
$\CAL{O}_{n,\BS{b}}:=\BB{C}\{\BS{t}-\BS{b}\}$ or $\CAL{O}(U)$, 
then $Z_{\BS{b}}\DA\subset\BB{C}[\BS{\tau}]$ denotes 
the linear span 
$\Span_{\BB{C}}\left(f_{\BS{b}}\DA:\,f\in Z\right)$ 
over $\BB{C}$ 
of the least parts of elements of $Z$. The mapping 
$$
\DA:\,Z\longrightarrow Z_{\BS{b}}\DA,\qquad
f\longmapsto f_{\BS{b}}\DA
$$
is called the \textit{least operator.} 
Let 
$$
S_{n,\BS{b}}:\,\BB{C}[\BS{\tau}]\times\CAL{O}_{n,\BS{b}}
\longrightarrow\BB{C}
$$
denote the ordinary sesquilinear form 
(see \S \ref{section-weak}). 
If $Z$ is finite-dimensional, the restriction 
$S_Z:\,Z_{\BS{b}}\DA\times Z\longrightarrow\BB{C}[\BS{t}]$ 
of $S_{n,\BS{b}}$ is found to be non-degenerate by 
de~Boor and Ron \cite{REFbr1}, \cite{REFbr2}. 
Let us define the projector 
$$
\TAY_{Z,\BS{b}}:\,\CAL{O}_{n,\BS{b}}\longrightarrow Z_{\BS{b}}
$$ 
as the adjoint linear mapping of the inclusion 
$Z_{\BS{b}}\DA\longrightarrow
\CAL{O}_{n,\BS{b}}\DA=\BB{C}[\BS{\tau}]$. 
This is a retract i.e. 
$\TAY_{Z,\BS{b}}\COMP\kappa=\rm{id}_{Z_{\BS{b}}}$, 
where $\kappa$ denotes the inclusion mapping. 
A vector subspace of $\BB{C}[\BS{\tau}]$ is defined to be  
$D$\textit{-invariant} if it is closed with respect 
to the partial 
differentiations with respect to $\tau_1,\dots,\tau_n$. 
Let $U_Z^{\,\rm{inv}}$ denote the set of points where 
$Z_{\BS{b}}\DA$ $(\BS{b}\in U)$ is $D$-invariant and 
$U_Z^{\,\rm{bdl}}$ the set of points of $U$ where, 
for each $k\in\BB{N}$, 
the $k$-jets of elements of $Z$ form a vector bundle 
in a respective small neighbourhood. 
The set $U_Z^{\,\rm{bdl}}$ is invariant under biholomorphic 
transformation of $U$. We prove the following.\\

%%%%%%%%%%%%%%%%%
\noindent{\bf Theorem \ref{THMd-closed}.} 
\textit{
The set $U_Z^{\,\rm{bdl}}$ is 
non-empty and analytically open and 
$U_Z^{\,\rm{bdl}}\subset U_Z^{\,\rm{inv}}$, 
where a set is called 
\emph{analytically open} if it is the complement of a 
closed analytic subset.  
}\vspace*{1ex} 

%%%%%%%%%%%%%%%%
This is the key theorem of this paper but it can be 
proved simply 
describing $\BB{C}[\BS{\Phi}]^d_{\BS{b}}\DA$ by the language of 
``formal theory of differential equations". 
This method is suggested to the author by Tohru Morimoto. \\

%%%%%%%%%%%%%%%%%%%%
\noindent{\bf Theorem \ref{THMvector}.} 
\textit{
Let $U$ be an open subset of $\BB{C}^n$ and $Z$ 
a finite-dimensional 
vector subspace of $\CAL{O}_n(U)$. 
Then $U_Z^{\,\rm{inv}}$ is invariant under biholomorphic 
transformation of $U$ and the vector space 
$Z_{\BS{b}}$ has a structure of an Artinian algebra 
as a factor algebra of 
$\CAL{O}_{n,\BS{b}}$ through the projector 
$\TAY_{Z,\BS{b}}:\,\CAL{O}_{n,\BS{b}}
\longrightarrow Z_{\BS{b}}$ 
at each $\BS{b}\in U_Z^{\,\rm{inv}}$. 
This structure is unique up to canonical isomorphism as 
a contravariant tensor  (see Remark \ref{covariant}).  
}\vspace*{1ex} 
%%%%%%%%%%%%%%%%%%%%

These results trace back to the very interesting theory of 
Bos and Calvi \cite{REFBC-1}. 
Let $X$ be a complex submanifold of an open subset 
$U\subset\BB{C}^m$ and $\CAL{O}_{X,\BS{a}}$ the local algebra 
of the germs of holomorphic functions on $X$ at $\BS{a}$. 
The vector space of polynomials of degrees at most $d$ 
in $\BS{x}$ 
is denoted by  $\BB{C}[\BS{x}]^d\subset\BB{C}[\BS{x}]$. We put 
$$
P^d(X_{\BS{a}})
:=\BB{C}[\BS{x}]^d|_{X_{\BS{a}}}\subset\CAL{O}_{X,\BS{a}}
,$$ 
the vector subspace of polynomial functions on $X$ of 
degree at most $d$. 
Let 
$$
\BS{\Phi}:=(\Phi_1,\dots,\Phi_m):\,
\BB{C}_{\BS{b}}^n\longrightarrow\BB{C}_{\BS{a}}^m
$$
be a local parametrisation of $X$, which means that, 
if its range 
is restricted to the image $X_{\BS{a}}$, it is the germ 
of a biholomorphic mapping. In this paper, 
we express an analytic mapping germ by the upper case 
of a bold Greek letter and the algebra homomorphism induced 
by it is 
denoted by the lower case of the corresponding letter as 
$$
\varphi:\,\CAL{O}_{m,\BS{a}}\longrightarrow
\CAL{O}_{n,\BS{b}},\qquad
f\longmapsto \varphi(f):=f\COMP\BS{\Phi}
.$$ 
Let 
$$
\BB{C}[\BS{\Phi}]^d:=\varphi(\BB{C}[\BS{x}]^d)
\subset\CAL{O}_{n,\BS{b}}
$$ 
denote the vector subspace of all the pull-backs of elements 
of $\BB{C}[\BS{x}]^d$ by $\BS{\Phi}$ namely the polynomials 
of degree not larger than $d$ in the component functions 
$\Phi_1,\dots,\Phi_m$. Following Bos and Calvi, 
we introduce a special set 
$D_{\BS{a}}^{\varphi,d}\subset\BB{C}[\BS{\xi}]$ of higher 
order tangents of $X$ at $\BS{a}$ as the set of 
the push-forwards of elements of 
$\BB{C}[\BS{\Phi}]^d_{\BS{b}}\DA\subset\BB{C}[\BS{\tau}]$ 
by $\BS{\Phi}$, where 
$\BB{C}[\BS{\Phi}]^d_{\BS{b}}\DA$ is considered as a set of 
higher order tangents of $\BB{C}^n$ at $\BS{b}$. 
It can be expressed 
as $D_{\BS{a}}^{\varphi,d}
:={}^s\varphi(\BB{C}[\BS{\Phi}]_{\BS{b}}^d\DA)
\subset\BB{C}[\BS{\xi}]$, using 
the adjoint homomorphism 
${}^s\varphi:\,\BB{C}[\BS{\tau}]\longrightarrow
\BB{C}[\BS{\xi}]$ 
of $\varphi$. It is a dual space of $P^d(X_{\BS{a}})$ 
with respect to the sesquilinear form induced 
by $S_{n,\BS{b}}$ and we call its elements 
\textit{Bos-Calvi tangents}. 
We define the $\varphi$-Taylor projector 
$$
\TAY_{\BS{a}}^{\varphi,d}:\,\CAL{O}_{X,\BS{a}}
\longrightarrow P^d(X_{\BS{a}})
$$
of degree $d$ on $X\subset\BB{C}^m$ at $\BS{a}$ as the adjoint 
homomorphism of the inclusion 
$D_{\BS{a}}^{\varphi,d}\longrightarrow
\varphi(\BB{C}[\BS{\tau}\DA)
\subset\BB{C}[\BS{\xi}]$. 
The mapping $\TAY_{\BS{a}}^{\varphi,d}$ and the space 
$D_{\BS{a}}^{\varphi,d}$ are defined 
using a local parametrisation $\varphi$ of $X$ around $\BS{a}$. 
If the reader wants to see this complicated relation among 
these spaces and mappings at a glance, see Diagram 4 in \S 
\ref{section-taylorian}. 

\

%%%%%%%%%%%%%%%%%%%
\noindent{\bf Proposition \ref{PROideal-closed}.}
\textit{
Let $X$ be a complex submanifold of an open subset 
of $\BB{C}^m$. 
For any point $\BS{a}\in X$, the following conditions 
are equivalent for each local parametrisation $\varphi$. 
\begin{enumerate}
\item
The set $D_{\BS{a}}^{\varphi,d}$ of Bos-Calvi tangents is 
$D$-invariant at $\BS{a}$.
\item
The space of annihilators 
$(D_{\BS{a}}^{\varphi,d})^{\bot_X}
=\Ker\TAY_{\BS{a}}^{\varphi,d}$
is an ideal of $\CAL{O}_{X,\BS{a}}$.
\end{enumerate}
}

\

%%%%%%%%%%%%%%%%%%%%%%
\noindent{\bf Remark \ref{REMgen}.} 
Let $X$ be a complex submanifold of an open subset
of $\mathbb{C}^m$. If $D_{\boldsymbol{a}}^{\psi,d}$
is $D$-invariant for some $\psi$, 
it is so also for all local parametrisations
$\varphi$ at $\boldsymbol{a}$, that is, 
$D_{\boldsymbol{a}}^{\varphi,d}$ is a covariant tensor.

\

Let us call a point $\BS{a}\in X$ \textit{$D$-invariant point 
of degree} $d$ 
if it satisfies the condition $(1)$ (or $(2)$) of Proposition 
\ref{PROideal-closed} for any (or some) $\varphi$ and 
\textit{$D$-invariant point of degree $\infty$} 
if it is $D$-invariant point of degree $d$ for all 
$d\in\BB{N}$. 
Theorem \ref{THMd-closed} implies that the set of 
$D$-invariant points of degree $d$ contains a non-empty 
analytically open subset $X$. Thus the set of points 
which are not $D$-invariant of degree $\infty$ are contained in 
a countable union of thin analytic subsets of $X$, 
a set of first 
category in Baire's sense with Lebesgue measure 0 in $X$. 

In the case of a plane algebraic curve, 
Bos and Calvi \cite{REFBC-2} prove that 
$\TAY_{\BS{a}}^{\varphi,d}$ 
is independent of the choice of $\varphi$ if and only 
if the powers of monomials appearing in 
$\BB{C}[\BS{\Phi}]^d_{\BS{b}}\DA\subset\BB{C}[{\tau}]$ 
form a \textit{gap-free} sequence and that this condition 
actually holds at all but finite number of points on $X$, 
using the Wronskian. In this case, 
the gap-free property is equivalent to 
the $D$-invariance property of 
$\BB{C}[\BS{\Phi}]^d_{\BS{b}}\DA$. 
Bos and Calvi \cite{REFBC-2} give the name 
``Taylorian property" to the independence of 
$\TAY_{\BS{a}}^{\varphi,d}$ from the local 
parametrisation $\varphi$. 
In this paper, we generalise a the theorem of Bos and Calvi 
to analytic curves with larger codimensions as follows.\\

%%%%%%%%%%%%%%%%%%%%%%%
\noindent{\bf Theorem \ref{THMequivalence}.}
\textit{
Let $X$ be a $1$-dimensional regular complex submanifold of an 
open subset of $\BB{C}^m$. 
Take a local parametrisation $\BS{\Phi}:\,
\BB{C}_{\BS{b}}^n\rightarrow\BB{C}_{\BS{a}}^m$. 
Then, for any $d\in\BB{N}$, the following three properties 
of $\BS{a}$ are equivalent.
\begin{enumerate}
\item
For all $k\in\BB{N}_0$, $\BS{a}$ is a bundle point of the 
$k$-jet spaces of $\BB{C}[\BS{\Phi}]_{\BS{b}}^d\DA$. 
\item
The powers of monomials appearing in 
$\BB{C}[\BS{\Phi}]^d_{\BS{b}}\DA\subset\BB{C}[{\tau}]$ 
form a \textit{gap-free} sequence. 
\item
Point $\BS{a}$ is Taylorian of degree $d$. 
\end{enumerate}
}
%%%%%%%%%%%%%%%%%%%%%%%%%%

For a higher-dimensional submanifold, however, 
$D$-invariance does not mean the Taylorian property and the set 
$D_{\BS{a}}^{\varphi,d}$ of higher order tangents 
depends upon the local parametrisation $\BS{\Phi}$ 
as we will see in 
Example \ref{EXAdifference}.

We can measure simplicity of embedding $X\subset\BB{C}^m$ 
by the set of Bos-Calvi tangents. Let us put 
$$
\theta_{\CAL{O}_{n,\BS{b}},\BS{\Phi}}(d)
:=
\max\left\{\deg~p:~p\in\BB{C}[\BS{\Phi}]^d_{\BS{b}}\DA
\setminus\{0\}\right\}
.$$
Let $\ord_{\BS{b}}(f)$ of $f\in\CAL{O}_{n,\BS{b}}$ 
denote the vanishing order of $f$ at $\BS{b}$. 
Since $\theta_{\CAL{O}_{n,\BS{b}},\BS{\Phi}}(d)$ is equal to 
$$
\sup\left\{\ord_{\BS{b}}F(\Phi_1,\dots,\Phi_m):\,
F\in\BB{C}[\BS{x}]^d,\ F(\Phi_1,\dots,\Phi_m)\neq 0\right\}
,$$ 
its estimate as a function of $d$ is called a 
\textit{zero-estimates} 
of $\BS{\Phi}$. 
It is related to transcendence of the embedding of 
$X\subset\BB{C}^m$ at $\BS{a}$. 
Zero-estimate is one of the most important methods in the 
transcendental number theory. It is given for exponential 
polynomials, for some number theoretic functions or 
solutions of 
some good system of differential equations so far. 
Here, using $D$-invariance of degree $\infty$, we show 
an effective zero-estimate for a certain set of quite general 
holomorphic functions, but only at a general point. Let 
\begin{gather*}
\chi(\overline{X}_{\BS{a}},\,d)
:=\dim_{\BB{C}}\BB{C}[\BS{\Phi}]^d
-\dim_{\BB{C}}\BB{C}[\BS{\Phi}]^{d-1}
\\
=\dim_{\BB{C}}P^d(X_{\BS{a}})-\dim_{\BB{C}}P^{d-1}(X_{\BS{a}})
\quad
(\dim_{\BB{C}}P^{-1}(X_{\BS{a}})=0)
\end{gather*} 
denote the Hilbert function of 
the smallest algebraic subset (the Zariski closure) 
$\overline{X}_{\BS{a}}$ 
of $\BB{C}^m$ that contains a representative of 
the germ $X_{\BS{a}}$. We have the following estimates. 
\vspace*{1ex}

%%%%%%%%%%%%%%%%%%%%%%%
\noindent{\bf Theorem \ref{THMinequality}.} 
\textit{Let 
$\BS{\Phi}:\,\BB{C}_{\BS{b}}^n
\longrightarrow X_{\BS{a}}\subset\BB{C}_{\BS{a}}^m$ 
be an embedding of a complex manifold. 
Then we have 
$$
{n+d\choose n}+
\theta_{\CAL{O}_{n,\BS{b}},\BS{\Phi}}(d)-d
\le 
\dim_{\BB{C}}\BB{C}[\BS{\Phi}]^d
=
\sum_{i=0}^d \chi(\overline{X}_{\BS{a}},\,i)
\le
{m+d\choose m}
$$
for any $D$-invariant point $\BS{a}$ of degree $d$. 
Hence the transcendency index 
$$
\alpha(X_{\BS{a}}):=
\limsup_{d\to\infty}
\log_d\theta_{\CAL{O}_{n,\BS{b}},\BS{\Phi}}(d)
,$$ 
defined in \cite{REFpitman}, 
is majorised by $\dim \overline{X}_{\BS{a}}\,(\le m)$ at 
a $D$-invariant point of degree $\infty$. 
} 
\vspace*{1ex}
%%%%%%%%%%%%%%%%%%%%%%%%%

That is, the transcendency index of an embedding 
$X\subset\BB{C}^m$ of complex manifold is bounded 
above effectively 
excepting points of a set expressed as a countable 
union of thin analytic subsets, even if $X$ is quite general. 
This estimate has both 
merits and demerits in comparison with the zero-estimate 
obtained as a corollary of Gabrielov's multiplicity-estimate 
\cite{REFgabrielov} for Noetherian functions on 
an integral manifold of 
a Noetherian vector field. 
%%%%%%%%%%%%%%%%%%%%%%%%%

All results remain valid also for the real analytic category. 
We do not treat the multi-point interpolation problem 
as \cite{REFBC-1} in this paper. 
The earlier half of this paper consists of detailed 
descriptions of 
the basic facts which may be well-known to specialists of the 
respective fields. They are included 
because the author could not guess the fields of readers. 
%%%%%%%%%%%%%%%%%%%%%%%%%%%%%%
\section{Least spaces}\label{sectionLEAST}
%%%%%%%%%%%%%%%%%%%%%%%%%%%%%%%%%
Here we recall the least space of a vector space of 
holomorphic functions 
at a point. It is the graded space associated 
to the maximal-ideal-adic filtration. 
We use the term ``least space" and the simple symbol $\DA$ 
of de~Boor and Ron \cite{REFbr1}, \cite{REFbr2} used 
in the interpolation theory. 
\\ 
 
First we define the least operator and the least space 
in an intrinsic way. 
Let 
$$
\CAL{O}_{n,\BS{b}}:=\BB{C}\{\BS{t}-\BS{b}\}
=\BB{C}\{t_1-b_1,\dots,t_n-b_n\}
$$ 
denote the local algebra of convergent power series centred at 
$\BS{b}:=(b_1,\dots,b_n)\in\BB{C}^n$ and 
$$
\FRAK{m}_{n,\BS{b}}:=(\BS{t}-\BS{b})\CAL{O}_{n,\BS{b}}
=(t_1-b_1,\dots,t_n-b_n)\CAL{O}_{n,\BS{b}}
$$ 
its maximal ideal. 
This algebra $\CAL{O}_{n,\BS{b}}$ has a filtration 
$$
\CAL{O}_{n,\BS{b}}
=\FRAK{m}_{n,\BS{b}}^0 \supset \FRAK{m}_{n,\BS{b}}^1
\supset \FRAK{m}_{n,\BS{b}}^2 \cdots
$$
and it satisfies the following conditions: 
$$
\bigcap_{i\in\BB{N}_0} \FRAK{m}_{n,\BS{b}}^i=\{0\},\quad 
\dim_{\BB{C}}
\FRAC{\FRAK{m}_{n,\BS{b}}^i}{\FRAK{m}_{n,\BS{b}}^{i+1}}
=\FRAC{(n+i-1)!}{(n-1)!\,i!}<\infty
.$$
Here the latter equality follows from the fact 
that the homogeneous 
polynomials of degree $i$ form a representative system of the 
residue classes of 
${\FRAK{m}_{n,\BS{b}}^i}/{\FRAK{m}_{n,\BS{b}}^{i+1}}$. 
We define the \textit{least space} of $\CAL{O}_{n,\BS{b}}$ by 
$$
\CAL{O}_{n,\BS{b}}\DA
:=\displaystyle\bigoplus_{i\in\mathbb{N}_0}
\FRAC{\FRAK{m}_{n,\BS{b}}^i}{\FRAK{m}_{n,\BS{b}}^{i+1}}
\quad(\BB{N}_0:=\{0,1,\dots\})
.$$
An element contained in a single component 
${\FRAK{m}_{n,\BS{b}}^i}/{\FRAK{m}_{n,\BS{b}}^{i+1}}$ is called 
\textit{homogeneous.} 
Let us define the \textit{order function} 
$$
\ord_{\BS{b}}:\,\CAL{O}_{n,\BS{b}}\longrightarrow\BB{N}_0,\quad
f\longmapsto\ord_{\BS{b}}f;
$$
by
$$
\ord_{\BS{b}}\, f
:=\max\left\{i:f \in \FRAK{m}_{n,\BS{b}}^i\right\}
\quad(\ord_{\BS{b}}\,0=+\infty)
.$$
If $\ord_{\BS{b}}\, f=i$, the \textit{least part} 
$f_{\BS{b}}\DA$ of $f$ is 
defined to be the residue class of $f$ in 
${\FRAK{m}_{n,\BS{b}}^i}/{\FRAK{m}_{n,\BS{b}}^{i+1}}$ i.e. 
$$
f_{\BS{b}}\DA:=f\ \rm{mod}\ 
\FRAK{m}_{n,\BS{b}}^{\alpha+1} 
\qquad (\alpha:=\ord_{\BS{b}}\, f,\ 0_{\BS{b}}\DA:=0)
.$$
The mapping 
$$
\DA\,:\,\CAL{O}_{n,\BS{b}}\longrightarrow
\CAL{O}_{n,\BS{b}}\DA,\quad 
f\longmapsto f_{\BS{b}}\DA
$$
is called the \textit{least operator}. 
This least operator is not a linear map. It is obvious that 
$\CAL{O}_{n,\BS{b}}\DA
=\Span_{\BB{C}}(f_{\BS{b}}\DA:\,f\in\CAL{O}_{n,\BS{b}})$, 
the linear span of the least parts of the elements of 
$\CAL{O}_{n,\BS{b}}$. Thus an element of 
$\CAL{O}_{n,\BS{b}}\DA$ may not be homogeneous. 
These are the intrinsic definitions of the 
least part and the least space. 

The original definition of the least part of $f$ by 
de~Boor and Ron is the non-zero homogeneous part  
$f_{\BS{b}}^{\BS{t}}\DA$ 
of $f$ of the smallest 
degree in the power series expansion of $f$ 
with respect to some 
affine coordinate system $\BS{t}$. 
This homogeneous part $f_{\BS{b}}^{\BS{t}}\DA$ is 
a coordinate expression of 
$f_{\BS{b}}\DA$. Before \S \ref{section-intrinsic}, 
we fix an affine coordinate system 
and hence we omit the superscript $\BS{t}$ even in 
the coordinate expression $f_{\BS{b}}^{\BS{t}}\DA$. 
Let us adopt the multi-exponent expression: 
$$
\BS{\nu}:=(\nu_1,\dots,\nu_n),\quad 
(\BS{t}-\BS{b})^{\BS{\nu}}
=(t_1-b_1)^{\nu_1}\cdots(t_n-b_n)^{\nu_n}
.$$ 
Since we consider elements of  
$\CAL{O}_{n,\BS{b}}\DA$ as belong to the dual space of 
$\CAL{O}_{n,\BS{b}}$ 
in the later sections, we express them by a polynomial in 
Greek variables corresponding to the original as:
$$
\tau_i:=(t_i-b_i)_{\BS{b}}\DA,\quad
\BS{\tau}^{\BS{\nu}}:=(\BS{t}-\BS{b})^{\BS{\nu}}_{\BS{b}}\DA 
\in \FRAK{m}_{n,\BS{b}}^{|\BS{\nu}|}/
\FRAK{m}_{n,\BS{b}}^{|\BS{\nu}|+1}
.$$
Hence the least space $\CAL{O}_{n,\BS{b}}\DA$ is denoted 
by the polynomial algebra $\BB{C}[\BS{\tau}]$ with 
$\BS{\tau}:=(\tau_1,\dots,\tau_n)$, 
$\tau_i:=(t_i-b_i)_{\BS{b}}\DA$.
The product in $\BB{C}[\BS{\tau}]$ is a natural operation 
as a consequence 
of the property 
$$
\FRAK{m}_{n,\BS{b}}^i\FRAK{m}_{n,\BS{b}}^j
=\FRAK{m}_{n,\BS{b}}^{i+j}
.$$
Let $Z_{\BS{b}}$ be a finite-dimensional vector subspace of 
$\CAL{O}_{n,\BS{b}}$. 
We put 
$$
Z_{\BS{b}}\DA:=\Span_{\BB{C}}(f_{\BS{b}}\DA:\,f
\in Z_{\BS{b}})\subset\BB{C}[\BS{\tau}]
,$$
the linear span of $\{f_{\BS{b}}\DA:\,f\in Z_{\BS{b}}\}$ 
and call this 
the \textit{least space} of $Z_{\BS{b}}$. We know the following 
(which will be strengthened to Theorem \ref{THMminimal}). 
%%%%%%%%%%%%%%%%%%%%%%%%%%%
\begin{thm}\label{THMgraded-space}{\rm (de~Boor-Ron 
\cite[Proposition 2.10]{REFbr2}; cf. 
\cite[Theorem 7.1]{REFinterpolation})}
Let $Z_{\BS{b}}$ be a finite-dimensional vector subspace of 
$\CAL{O}_{n,\BS{b}}$. Then we have
$$
\dim_{\mathbb{C}}Z_{\BS{b}}\DA = \dim_{\mathbb{C}}Z_{\BS{b}}.
$$
\end{thm}
%%%%%%%%%%%%%%%%%%%%%%%%%%%%%%%%%%%%%%%%%%%%%%%%%%%%%%%%%%%%%%%%
\section{Jet spaces and multivariate Wronskians}%
\label{section-wronskian}
%%%%%%%%%%%%%%%%%%%%%%%%%%%%%%%%%%%%%%%%%%%%%%%%%%%%%%%%%%%%%%%%%
Let $Z$ be a vector space of holomorphic functions on 
an open subset $U\subset\BB{C}^n$. 
The $k$-jets of the elements of $Z$ at a 
point of $U$ form a vector space. 
If we gather such vector spaces 
only at good points of $U$, we get a holomorphic vector 
bundle, $k$-jet bundle of $Z$. 
The theorem of Walker on Wronskians implies that the 
jet sections of elements of $Z$ with order up to 
$\dim_{\BB{C}}Z-1$ span those of any order on an analytically 
open subset. This order $\dim_{\BB{C}}Z-1$ is minimal 
for a general $Z$. 
\\

Let $\CAL{O}_n$ denote the sheaf of germs of holomorphic 
functions on $\BB{C}^n$. 
We call the sheaf of germs of holomorphic sections 
of a holomorphic vector bundle \emph{the associated sheaf}
or an \emph{$\mathcal{O}_n$-module} associated to the bundle. 
It is expressed by the script style of the letter 
which is used for the bundle. 
The correspondence of holomorphic vector bundles on $U$ 
to the associated $\CAL{O}_n$-module 
defines a bijective mapping of the set of isomorphism 
classes of holomorphic vector bundles of rank $r$ over 
$U$ onto the set of isomorphism classes of locally 
free $\CAL{O}_U$-modules ($\CAL{O}_U=\CAL{O}_n|_U$) 
of rank $r$ over $U$ (see e.g. \cite[Proposition 3.3]{REFpr}). 
We use the parenthesised $\BS{b}$ for the values 
or the sets of 
the values at $\BS{b}$ (bundle fibre, e.g. $R_{k}({\BS{b}})$) 
and $\BS{b}$ in subscript style for the germs or the sets 
of the germs at $\BS{b}\in U$ (stalk, sheaf fibre, e.g. 
$\CAL{L}_{\BS{b}}^k$) or the indication 
of the centre of the coordinates for which the least part 
is defined (as $Z_{\BS{b}}\DA$). 

Let 
$$
\pi_k:\,J^k(\CAL{O}_U)\longrightarrow U\quad
\left(J^k(\CAL{O}_U)\cong U\times\BB{C}^{N(n,k)},\ 
N(n,k):={n+k\choose k}\right)
$$ 
denote the $k$-jet space of holomorphic functions on $U$, 
the holomorphic vector bundle 
of $k$-jets of holomorphic functions defined on open subsets 
of $U$. Its coordinates are denoted by 
$$
\bigl(\BS{t},\,(u_{\BS{\nu}}:\,|\BS{\nu}|\le k)\bigr)\quad
(\BS{t}:=(t_1,\dots,t_n)\in U,\ \BS{\nu}:=(\nu_1,\dots,\nu_n))
.$$
Let $\CAL{O}_n(V)$ denote the algebra of sections of 
$\CAL{O}_n$ 
over $V\subset U$. The \textit{$k$-jet extension} 
$j{\,}^k{f}$ of $f\in\CAL{O}_n(V)$ is defined by 
\begin{gather*}
j{\,}^k{f}:\,V\longrightarrow J^k(\CAL{O}_U),\quad
\BS{t}\longmapsto\Bigl(\BS{t},\,
u_{\BS{\nu}}(j{\,}^kf):\,|\BS{\nu}|\le k\Bigr)
\\
\left(u_{\BS{\nu}}(j{\,}^kf)
%=\FRAC{1}{\BS{\nu}!}S_{n,\BS{t}}\SLF{\BS{\tau}^{\BS{\nu}}}{f}
:=\FRAC{1}{\BS{\nu}!}\FRAC{\partial^{|\BS{\nu}|}f(\BS{t})}%
{\partial\BS{t}^{\BS{\nu}}}\right)
.\end{gather*}
This is a section of the jet space $J^k(\CAL{O}_U)$ over $V$. 
The coefficient ${1}/{\BS{\nu}!}$ of the $\BS{\nu}$-th fibre 
coordinate is convenient in the calculation of prolongation 
below. Thus the coordinates $u_{\BS{\nu}}\ (|\BS{\nu}|\le k)$ 
are called the 
\textit{fibre coordinates} corresponding to the 
normalised $\BS{\nu}$-th derivative. 

If $Z$ is a finite-dimensional vector subspace of 
$\CAL{O}_n(U)$, 
the evaluation of the jet extension at 
$\BS{b}\in U$ defines the mapping 
$$
j{\,}^k|_Z({\BS{b}}):\,Z\longrightarrow J^k(\CAL{O}_U)(\BS{b}),
\quad
f\longmapsto j{\,}^kf(\BS{b})
.$$
Let 
$$
\bigl(\BS{b};\,R_{k}({\BS{b}})\bigr)
:=\{j{\,}^kf(\BS{b}):\,f\in Z\}
$$ 
denote its image. Then 
we have the natural commutative Diagram 1 of 
linear mappings of vector spaces. 
%%%%%%%%%%%%%%
\renewcommand{\tablename}{\textsf{diagram}}
\begin{table}[h]
\caption{\textsf{\scriptsize Jet spaces of 
$Z\subset\BB{C}[\BS{\tau}]$}}%
\label{tab:1}
\unitlength=1.2ex
% \unitlength=1mm
\begin{center}\begin{picture}(65,12)(8,0)
\thicklines
\put(36.5,14){$Z$}
\path(37,13)(37,7)\path(36.4,8)(37,7)(37.6,8)
   \put(37.5,8.7){\small$j^{k}|_Z({\BS{b}})$}% vertical
\path(15.5,5)(20.5,5)\path(16.3,5.4)(15.5,5)(16.3,4.6)
% horizon leftleft
\put(21.8,4){$R_{k-1}({\BS{b}})$}
\path(29,5)(34.5,5)\path(29.8,5.4)(29,5)(29.8,4.6)
% horizon left
\put(35.5,4){$R_{k}({\BS{b}})$}
   \put(30,2.7){\small$\Sigma^k({\BS{b}})$}
\path(35,13)(27,7)\path(28,7.2)(27,7)(27.5,8)
   \put(25.3,10.9){\small$j^{k-1}|_Z({\BS{b}})$}% left down
\path(41,5)(46,5)\path(41.8,5.4)(41,5)(41.8,4.6)% horizon right
   \put(43.3,11){\small$j^{k+1}|_Z({\BS{b}})$}
\path(54.5,5)(59.5,5)\path(55.3,5.4)(54.5,5)(55.3,4.6)
% horizon rightright
\put(47,4){$R_{k+1}({\BS{b}})$}
   \put(41,2.7){\small$\Sigma^{k+1}({\BS{b}})$}
\path(39,13)(47,7)\path(46,7.2)(47,7)(46.5,7.9)% right down
\dottedline{.5}(12.5,5)(15.5,5)
\dottedline{.5}(59.5,5)(62,5)
\end{picture}\end{center}
\end{table}
%%%%%%%%%%%%%%%%%%%%%%%%%%
Here, the horizontal mappings are projections defined 
by forgetting the coordinates corresponding to 
the highest order derivatives. The total image 
$\bigcup_{\BS{b}\in U}(\BS{b},\,R_{k}({\BS{b}}))$ 
is not an analytic subset of $J^k(\CAL{O}_U)$ nor even 
a closed subset in general. Let us put 
$$
r_k:=\max\{\dim_{\BB{C}}R_k({\BS{t}}):\,\BS{t}\in U\},\quad
U_Z^k:=\{\BS{t}\in U:\,\dim_{\BB{C}}R_k({\BS{t}})=r_k\}
.$$
Let us call the complement of a closed analytic subset in $U$ 
\textit{analytically open} in $U$. 
Suppose that $U$ is connected. 
Since the points of $U_Z^k$ are characterised by the full rank 
condition of certain matrices with holomorphic elements, 
$U_Z^k$ is a non-empty analytically open subset. Putting 
$$
R_k:=\left\{\left(\BS{b};\,R_k(\BS{b})\right):
\,\BS{b}\in U_Z^k\right\}
,$$
we have a holomorphic vector bundle 
$$
\pi_k|_{R_k}:\,R_{k}\longrightarrow U_Z^k
.$$
%%%%%%%%%%%%%%%%%%%%%%%%%%%
\begin{defin}\label{DEFbundlepoint}
We call a point of $U_Z^k$ \textit{bundle point of the 
$k$-jet space} of $Z$ and a point of 
$U_Z^{\,\rm{bdl}}:=U_Z^0\cap U_Z^1\cap\cdots$ 
\textit{bundle point of all 
the jet spaces} of $Z$. 
\end{defin}
%%%%%%%%%%%%%%%%%%%%%%%%%
\begin{exa}
Let us put $Z:=\Span(s^2,t^2,s^3))\in\CAL{O}_2(\BB{C}^2)$. 
Since the higher order derivatives of $s^2,t^2,s^3$ 
with respect to 
multiple order 
$$
\BS{\nu}=(0,0),\ (1,0),\ (0,1),\ (2,0),\ (1,1),\ (0,2)
,\ (3,0),\ (2,1),\ (1,2),\ (0,3)
$$ 
are listed as 
$$
\begin{pmatrix}
s^2&2s  &0 &2 &0 &0&0&0&0&0\\
t^2&0   &2t&0 &0 &2&0&0&0&0\\
s^3&3s^2&0 &6s&0 &0&6&0&0&0
\end{pmatrix}
,$$
we have 
\begin{gather*}
r_0=1,\ r_1=r_2=\cdots=3; 
\\
U_Z^0=\BB{C}^2\setminus\{(0,0)\}
\supsetneqq
U_Z^1=\BB{C}^2\setminus\{st=0\}
\\
\subsetneqq
U_Z^2=\BB{C}^2\setminus\{s=0\}
\subsetneqq
U_Z^3=U_Z^4=\cdots=\BB{C}^2;
\\
U_Z^{\,\rm{bdl}}=\BB{C}^2\setminus\{st=0\}
.\end{gather*} 
Hence there is no monotone inclusion relation among 
$U_Z^0,U_Z^1,U_Z^2,\dots$ (see \ref{LEMbdl-nbd}). 
\end{exa}
%%%%%%%%%%%%%%%%%%%%

Now we recall a known fact on multivariate Wronskians. 
For polynomials in multi-variable, Siegel \cite{REFsiegel}
and Roth \cite{REFroth} found that their 
linear independence is judged by non-vanishing 
of certain set of Wronskians and applied it to 
the theory of the rational approximation to algebraic numbers. 
Walker \cite{REFwalker} has obtained the minimal set of 
Wronskians needed to judge the linear independence. 
It allows us to write 
out a minimal finite system of PDEs explicitly 
whose solution space is a given finite-dimensional 
vector subspace $Z\subset\CAL{O}_n(U)$. 
It also enables us to state 
the subsequent arguments efficiently. 
%%%%%%%%%%%%%%%%%%%%%%%%%%%%%
\begin{defin}
Walker called 
$Y:=\{\BS{\nu}_1,\dots,\BS{\nu}_m\}\in(\BB{N}_0^n)^m$ 
\textit{Young-like} if it satisfies the following condition:
$$
\left(\BS{\nu}\in\BB{N}_0^n,\ \exists\ \BS{\nu}_i\in Y:\ 
\BS{\nu}\le\BS{\nu}_i\right)\ 
\Longrightarrow 
{\BS{\nu}}\in\BS{Y}
.$$
Here $\le$ implies the product order of the usual order 
of $\BB{N}_0$ defined as 
$$
\BS{\nu}:=(\nu_1,\dots,\nu_n)\le \BS{\mu}:=(\mu_1,\dots,\mu_n)
\Longleftrightarrow
\nu_1\le\mu_1,\dots,\nu_1\le\mu_m
.$$ 
The property of Young-likeness of $Y$ is equivalent to the 
condition that\\ 
$\Span_{\BB{C}}(\BS{\tau}^{\BS{\nu}_1},\dots,
\BS{\tau}^{\BS{\nu}_m})$ is 
$D$-invariant in the sense defined in 
\S \ref{section-prolongation}. 
Let us put 
$$
\CAL{Y}_m:=\{Y\in(\BB{N}_0^n)^m:\,Y\text{ is Young-like}\}
.$$
\end{defin}
%%%%%%%%%%%%%%%%%%%%%%%%%%
A set with this property is called in various ways: 
\emph{order-closed set} in \cite{REFbr2}, 
\emph{monotone} or \emph{lower set} in others. 
%%%%%%%%%%%%%%%%%%%%%%%%%%
\begin{thm}\label{THMwalker}%
{\rm(Siegel \cite{REFsiegel}; Roth \cite{REFroth}; 
Walker 
\cite[Theorem 3.1, Theorem 3.4, Remark in \S 3]{REFwalker})}
\hspace*{\fill}\\ 
\hspace*{3.5ex}(1)\hspace{1.2ex} 
\begin{minipage}[t]{64.5ex}
Let $f_1,\dots,f_m$ be meromorphic functions on a 
connected open subset $U\subset\BB{C}^n$. 
Then they are linearly independent if and only if 
there exists at least one 
$\{\BS{\nu}_1,\dots,\BS{\nu}_m\}\in\CAL{Y}_m$ 
such that 
$$
W(f_1,\dots,f_m;\,\BS{\nu}_1,\dots,\BS{\nu}_m):=
\begin{array}{|cccc|}
f_1^{(\BS{\nu}_1)} & f_1^{(\BS{\nu}_2)} & \ldots & 
f_1^{(\BS{\nu}_m)} \\
f_2^{(\BS{\nu}_1)} & f_2^{(\BS{\nu}_2)} & \ldots & 
f_2^{(\BS{\nu}_m)} \\
\vdots & \vdots & \vdots & \vdots \\
f_m^{(\BS{\nu}_1)} & f_m^{(\BS{\nu}_2)} & \cdots & 
f_m^{(\BS{\nu}_m)}
\end{array}
$$
does not vanish identically. 
\end{minipage}
\\
\hspace*{3.5ex}(2)\hspace{1.2ex} \begin{minipage}[t]{64.5ex}
The set $\CAL{Y}_m$ is the least set among 
the sets with this property in the following sense. 
If $\CAL{Y}'\subset(\BB{N}_0^n)^m$ and 
$\CAL{Y}'\subsetneq\CAL{Y}_m$, 
there exist linearly independent 
monomials $f_1,\dots,f_m$ such that 
$$
W(f_1,\dots,f_m;\,\BS{\nu}_1,\dots,\BS{\nu}_m)=0
$$
for all $(\BS{\nu}_1,\dots,\BS{\nu}_m)\in\CAL{Y}'$. 
\end{minipage}
\end{thm}
%%%%%%%%%%%%%%%%%%%%%%%%%%%%%%%%%%%%%

We have the following immediate consequence of this theorem. 
%%%%%%%%%%%%%%%%%%%%%%%%%%%%%%%%%%%%%
\begin{cor}\label{CORwalker}
Let $\{f_1,\dots,f_m\}$ be a basis of a vector subspace 
$Z\subset\CAL{O}_{n}(U)$. 
Then $Z$ is the space of holomorphic solutions of 
the system of PDEs: 
$$
W(f_1,\dots,f_{m},y\/;\,\BS{\nu}_1,\dots,\BS{\nu}_{m+1})=0
\qquad
(\{\BS{\nu}_1,\dots,\BS{\nu}_{m+1}\}\in\CAL{Y}_{m+1})
,$$
where $y=y(\BS{t})\in\CAL{O}_{n}(U)$ denotes 
the unknown function. 
\end{cor}
%%%%%%%%%%%%%%%%%%%%%%%%%%
\begin{lem}\label{LEMbdl-nbd}
Let $U$ be a connected open subset of $\BB{C}^n$ and 
let $\{f_1,\dots,f_m\}$ $(m\ge 1)$ be a basis of 
a vector subspace 
$Z\subset\CAL{O}_{n}(U)$. Then we have the following. 
\begin{enumerate}
\item
If $Y=\{\BS{\nu}_1,\dots,\BS{\nu}_m\}
\in\CAL{Y}_m$ and if $W(f_1,\dots,f_m;Y)(\BS{b})\neq 0$, 
we see that $\BS{b}\in U_Z^{m-1}$ and that the vectors 
$\left(f_1^{(\BS{\nu}_i)}(\BS{b}),\dots,
f_m^{(\BS{\nu}_i)}(\BS{b})\right)$ 
$(i=1,\dots,m)$ span all 
$\left(f_1^{(\BS{\nu})}(\BS{b}),\dots,f_m^{(\BS{\nu})}
(\BS{b})\right)$ 
$(\BS{\nu}\in\BB{N}_0^m)$. 
Hence the fibre coordinates 
$u_{\BS{\nu}_1},\dots, u_{\BS{\nu}_m}$ of $J^{m-1}(\CAL{O}_U)$ 
form a fibre coordinates system of $R_{m-1}$ over a 
neighbourhood of $\BS{b}$. 
\item
We have 
$$
1=r_0\le r_1\le \cdots\le r_{m-1}=r_m=\cdots=m
;$$$$
U_Z^{m-1}\subset U_Z^m\subset\cdots;\quad 
U_Z^{\,\rm{bdl}}=U_Z^1\cap\cdots\cap U_Z^{m-1}
.$$
\item
The vector bundles $R_k$ $(k\ge m-1)$ are all isomorphic 
if they are restricted to $U_Z^{m-1}$. 
\item
The sets $U_Z^k$ and $U_Z^{\,\rm{bdl}}$ 
are non-empty analytically open subset of $U$ and 
independent of 
the change of local coordinates of $U$. 
\end{enumerate}
\end{lem}
%%%%%%%%%%%%%%%%%%%%%%%%%%

\begin{proof}
\begin{enumerate}
\item
If $\boldsymbol{\nu}_i
\in Y\in\CAL{Y}_m$, there is a chain connecting it 
to $(0,\dots,0)$. It is not longer 
than $\# Y-1=m-1\quad(1\le i\le m)$. 
Then the coordinates $u_{\BS{\nu}_1},\dots,u_{\BS{\nu}_m}$ 
form a subset of the fibre coordinate system of 
$J^{m-1}(\CAL{O}_U)(\BS{b})$. 
The assumption $W(f_1,\dots,f_m;Y)(\BS{b})\neq 0$ implies that 
$R_{m-1}(\BS{b})$ is a $m$-dimensional subspace of 
$J^{m-1}(\CAL{O}_U)(\BS{b})$. 
Since $R_k(\BS{b})$ is spanned by $m$ vectors, this is maximal 
and we see that $\BS{b}\in U_Z^{m-1}$. 
The rest are now obvious. 
\item
The fact that $r_i$ are not decreasing is obvious. 
By Theorem \ref{THMwalker}, there exist $\BS{b}$ and 
$Y:=\{\BS{\nu}_1,\dots,\BS{\nu}_m\}\in\CAL{Y}_m$ with 
$W(f_1,\dots,f_m;Y)(\BS{b})\neq 0$. Then $r_{m-1}=m$ by (1). 
These prove the first expression. Since $r_{m-1}=r_m=\cdots=m$ 
and since $R_k$ are increasing, 
$U_Z^{m-1}\subset U_Z^m\subset\cdots$ follows. 
Therefore 
$U_Z^{\,\rm{bdl}}
=U_Z^1\cap U_Z^2\cap\cdots=U_Z^1\cap\cdots\cap U_Z^{m-1}$. 
\item
The property (1) implies that $u_{\nu}$ $(|\BS{\nu}|\le m-1)$ 
is a linear combination 
of $u_{\BS{\nu}_1},\dots,u_{\BS{\nu}_m}$ in $R_{m-1}$ with 
coefficients in $\CAL{O}_n$. 
Then $R_k$ $(k\ge m-1)$ are not proper extensions of $R_{m-1}$. 
\item
We have already stated that $U_Z^k$ are open in their 
definition above. Independence from coordinate change is 
a consequence of the fact 
that the jet spaces are contravariant geometric object 
 (see Remark \ref{covariant}), which will be detailed in 
\S \ref{section-intrinsic}. 
\end{enumerate}
\end{proof}
%%%%%%%%%%%%%%%%%%%%%%%%%%
\section{Generic $D$-invariance of least spaces}
\label{section-prolongation}
%%%%%%%%%%%%%%%%%%%%%%%%%%%%%%%%
Let $Z$ be a vector space of holomorphic functions 
on an open subset 
$U\subset\BB{C}^n$. 
The vector space of the germs of the elements of $Z$ 
at $\BS{b}\in U$ is denoted by $Z_{\BS{b}}$. 
The least space $Z_{\BS{b}}\DA$ of $Z_{\BS{b}}$ is identified 
as a vector subspace of $\BB{C}[\BS{\tau}]$ 
(as stated in Introduction). 
Our main purpose here is to prove that 
$Z_{\BS{b}}\DA$ is closed under partial differentiation by 
$\tau_i$ $(1\le i\le n)$ at a general point of $U$. 
The point of the proof is the prolongation of PDEs annihilating 
the jets of elements of $Z$, which is suggested to 
the author by Tohru Morimoto. 

%%%%%%%%%%%%%%%%%%%%%%%%
\begin{defin}
A vector subspace 
$Q\subset\BB{C}[\BS{\tau}]$ 
$(\BS{\tau}:=(\tau_1,\dots,\tau_n))$ is 
$D$\textit{-invariant} if it is closed with respect 
to the partial 
differentiations with respect to $\tau_1,\dots,\tau_n$. 
Let $Z$ be a vector space of holomorphic functions 
on an open subset 
$U\subset\BB{C}^n$. We put 
$$
U_Z^{\,\rm{inv}}
:=\{\BS{b}:Z_{\BS{b}}\DA\text{ is $D$-invariant}\}
.$$
\end{defin}
%%%%%%%%%%%%%%%%%%%%%%%
\begin{prop}\label{PROnonvanishing}
If $0<\dim_{\BB{C}}Z_{\BS{b}}<\infty$ and 
$\BS{b}\in U_Z^{\,\rm{inv}}$, then there exists 
$f\in Z_{\BS{b}}$ such that $f(\BS{b})\neq 0$. 
\end{prop}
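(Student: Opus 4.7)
The plan is to extract a nonzero constant inside $Z_{\BS{b}}\DA$ by differentiating, and then argue that the existence of a nonzero constant in $Z_{\BS{b}}\DA$ forces some $f\in Z$ to be nonzero at $\BS{b}$.

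First I would pick any $f\in Z\setminus\{0\}$, which exists since $\dim_{\BB{C}}Z>0$. Its least part $f_{\BS{b}}\DA$ is a nonzero homogeneous polynomial in $\BB{C}[\BS{\tau}]$ of some degree $k=\ord_{\BS{b}}f\ge 0$. If $k=0$ we are already done, because $f_{\BS{b}}\DA=f(\BS{b})\ne 0$. Otherwise, there is a multi-index $\BS{\nu}$ with $|\BS{\nu}|=k$ such that the coefficient of $\BS{\tau}^{\BS{\nu}}$ in $f_{\BS{b}}\DA$ is nonzero; then $\partial^{\BS{\nu}}(f_{\BS{b}}\DA)=\BS{\nu}!\,c$ for some $c\in\BB{C}^{\times}$, which is a nonzero constant polynomial. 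By the standing assumption that $Z_{\BS{b}}\DA$ is $D$-invariant, this constant lies in $Z_{\BS{b}}\DA$. So the problem is reduced to: if a nonzero constant belongs to $Z_{\BS{b}}\DA$, then some element of $Z$ is nonvanishing at $\BS{b}$.

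For that, write the constant as a finite linear combination $c=\sum_i\lambda_i(f_i)_{\BS{b}}\DA$ with $f_i\in Z$. Each $(f_i)_{\BS{b}}\DA$ is homogeneous of degree $\ord_{\BS{b}}f_i$, i.e.\ lies in a single graded piece of $\BB{C}[\BS{\tau}]$. Projecting both sides to the degree-$0$ component of $\BB{C}[\BS{\tau}]$ kills all terms except those with $\ord_{\BS{b}}f_i=0$, for which $(f_i)_{\BS{b}}\DA=f_i(\BS{b})$. Thus $c=\sum_{\ord_{\BS{b}}f_i=0}\lambda_i f_i(\BS{b})$, and since $c\ne 0$ at least one such $f_i$ must satisfy $f_i(\BS{b})\ne 0$. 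This gives the desired element of $Z$.

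The argument is essentially a one-step observation once one notices that least parts are homogeneous (so $Z_{\BS{b}}\DA$ is a graded subspace of $\BB{C}[\BS{\tau}]$) and that $D$-invariance lets us force the degree down to $0$. The only point requiring care is the implicit use of homogeneity of $f_{\BS{b}}\DA$ and the legitimacy of projecting a relation in $Z_{\BS{b}}\DA$ onto its constant component; both are consequences of the definition of the least operator reviewed in Section~\ref{sectionLEAST}. I do not foresee a genuine obstacle, and I do not expect to need the full strength of Theorem~\ref{THMgraded-space}.
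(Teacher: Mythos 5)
Your proof is correct and takes essentially the same approach as the paper, just phrased directly rather than by contradiction: both arguments rest on the observation that $D$-invariance forces a nonzero constant into $Z_{\BS{b}}\DA$, which (because $Z_{\BS{b}}\DA$ is spanned by the homogeneous least parts) is impossible unless some $f\in Z$ has $\ord_{\BS{b}}f=0$. The paper compresses this into the remark that $Z_{\BS{b}}\subset\FRAK{m}_{n,\BS{b}}$ would make $Z_{\BS{b}}\DA$ fail to be $D$-invariant; you unpack exactly the reasoning behind that remark.
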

%\vspace*{1ex}
%%%%%%%%%%%%%%%%%%%%%%%%%%

\begin{proof}
If $f(\BS{b})=0$ for all $f\in Z_{\BS{b}}$, we have 
$1\not\in Z_{\BS{b}}\DA$. This contradicts the assumption of 
$D$-invariance of $Z_{\BS{b}}\DA$. 
\end{proof}
%%%%%%%%%%%%%%%%%%%%%

Therefore, at a point $\BS{b}$ of the simultaneous 
vanishing locus of 
the elements of $Z$, the least space $Z_{\BS{b}}\DA$ is 
not $D$-invariant. 
The converse does not holds as we will see in 
Example \ref{EXA1}. 
%%%%%%%%%%%%%%%%%%%%%%%
\begin{prop}\label{PROtranslation}
If $Q\subset\BB{C}[\BS{\tau}]$ is $D$-invariant, 
it is translation invariant i.e. $p(\BS{\tau})\in Q$ implies 
$p(\BS{\tau}+\BS{b})\in Q$ for any constant vector $\BS{b}$. 
\end{prop}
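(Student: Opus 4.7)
The plan is to use the multivariate Taylor expansion of a polynomial about a shifted point, which reduces the translate $p(\boldsymbol{\tau}+\boldsymbol{b})$ to a \emph{finite} linear combination of iterated partial derivatives of $p(\boldsymbol{\tau})$.

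First I would fix $p\in Z$ and let $d:=\deg p$. Since $p$ is a polynomial, $\partial^{\boldsymbol{\nu}}p=0$ whenever $|\boldsymbol{\nu}|>d$, so the Taylor expansion
$$
p(\boldsymbol{\tau}+\boldsymbol{b})
=\sum_{|\boldsymbol{\nu}|\le d}\frac{\boldsymbol{b}^{\boldsymbol{\nu}}}{\boldsymbol{\nu}!}\,
\partial^{\boldsymbol{\nu}}p(\boldsymbol{\tau})
$$
is a finite sum. This is a purely formal identity in $\mathbb{C}[\boldsymbol{\tau}]$ and needs no analytic justification.

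Next, by the $D$-invariance hypothesis, each $\partial^{\boldsymbol{\nu}}p$ lies in $Z$ (apply the closure under $\partial/\partial\tau_i$ iteratively $|\boldsymbol{\nu}|$ times). The coefficients $\boldsymbol{b}^{\boldsymbol{\nu}}/\boldsymbol{\nu}!$ are scalars in $\mathbb{C}$, and $Z$ is a vector subspace, so the finite linear combination on the right-hand side belongs to $Z$. Hence $p(\boldsymbol{\tau}+\boldsymbol{b})\in Z$, which is the desired translation invariance.

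There is essentially no obstacle here: the only point requiring a mild remark is that the Taylor sum is finite, which is what allows us to invoke the vector-space (rather than topological) closure of $Z$. The hypothesis that the elements of $Z$ are polynomials, rather than formal power series, is exactly what makes the argument work.
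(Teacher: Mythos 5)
Your proof is correct and takes exactly the same approach as the paper: expand $p(\boldsymbol{\tau}+\boldsymbol{b})$ via the finite Taylor formula, observe that each derivative $\partial^{\boldsymbol{\nu}}p$ lies in $Z$ by $D$-invariance, and conclude by the vector-space structure of $Z$. The paper states this more tersely but the argument is identical.
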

%%%%%%%%%%%%%%%%%%%%%%

\begin{proof}
This is obvious from the ordinary Taylor formula:
$$% \\$\hfill\displaystyle
p(\BS{\tau}+\BS{b})
=\sum_{|\BS{\nu}|\le d}\FRAC{1}{\BS{\nu}!}
\FRAC{\partial^{|\BS{\nu}|} p(\BS{\tau})}%
{\partial\BS{\tau}^{\BS{\nu}}}\BS{b}^{\BS{\nu}}
\quad (d:=\deg p)
.$$
\end{proof}

Let $\CAL{L}^k$ denote the sheaf of germs of 
holomorphic functions 
on the manifold $J^k(\CAL{O}_{U_Z^{\,\rm{bdl}}})$ 
vanishing on the 
submanifold $R_k$ which are linear in the fibre coordinates $u_{\BS{\nu}}$. 
This is an $\CAL{O}_{U_Z^{\,\rm{bdl}}}$-module 
on $U_Z^{\,\rm{bdl}}$. 
The local sections of $\CAL{L}^k$ are functions 
$$
A(\BS{t},\BS{u})
:=\sum_{|\BS{\nu}|\le k}\alpha_{\BS{\nu}}(\BS{t})
\cdot u_{\BS{\nu}}
$$
which are homogeneous linear in fibre coordinates 
$u_{\BS{\nu}}$ of 
$J^k(\CAL{O}_{U_Z^{\,\rm{bdl}}})$ with coefficients 
$\alpha_{\BS{\nu}}(\BS{t})\in\CAL{O}_n(V)$ 
$(V\subset U_Z^{\,\rm{bdl}})$. 
If $u_{\BS{\nu}}$ are replaced by 
the corresponding differential operators 
$(1/\BS{\nu}!)\cdot{\partial^{|\BS{\nu}|}}
/{\partial\BS{t}^{\BS{\nu}}}$, 
then elements of ${\CAL{L}}^k(V)$ become linear partial 
differential 
operators which annihilates the functions of $Z$ on $V$. 
%%%%%%%%%%%%%%%%%%%%%%%%%%%%%%%%%%%%%
\begin{defin}
Take a local section 
$$
A(\BS{t},\BS{u})
:=\sum_{|\BS{\nu}|\le k-1}
\alpha_{\BS{\nu}}(\BS{t})\cdot u_{\BS{\nu}}
\in\CAL{L}^{k-1}(V)\quad 
(\alpha_{\BS{\nu}}(\BS{t})\in\CAL{O}_n(V))
$$
over $V\subset U$. 
Differentiating the relation 
$$
\sum_{|\BS{\nu}|\le k-1}\dfrac{1}{\BS{\nu}!}
\alpha_{\BS{\nu}}(\BS{t})
\cdot \FRAC{\partial^{|\BS{\nu}|}f}{\partial \BS{t}^{{\nu}}}=0
\qquad(f\in Z)
$$ 
by $t_i$, we have 
$$
\sum_{|\BS{\nu}|\le k-1}
\dfrac{1}{\BS{\nu}!} \left(
\alpha_{\BS{\nu}}^{(\BS{e}_i)}(\BS{t})
\cdot \FRAC{\partial^{|\BS{\nu}|}}{\partial\BS{t}^{\BS{\nu}}}
+\alpha_{\BS{\nu}}(\BS{t})
\cdot \FRAC{\partial^{|\BS{\nu}+\BS{e}_i|}}%
{\partial\BS{t}^{\BS{\nu}+\BS{e}_i}}
\right)f=0
\qquad(f\in Z)
,$$
where $\BS{e}_i:=(\delta_{1i},\delta_{2i},\dots,\delta_{ni})$ 
denotes the $i$-th unit vector. 
Hence $\alpha_{\BS{\nu}}^{(\BS{e}_i)}$ expresses 
the derivative of $\alpha_{\BS{\nu}}$ by $t_i$. 
This equation implies that
$$
\sum_{|\BS{\nu}|\le k-1}
\left(\alpha_{\BS{\nu}}^{(\BS{e}_i)}(\BS{t})\cdot u_{\BS{\nu}}
+(\nu_i+1)\alpha_{\BS{\nu}}(\BS{t})
\cdot u_{\BS{\nu}+\BS{e}_i}\right)\in\CAL{L}^k(V)
\quad(i=1,\dots,n)
.$$
These are called the \textit{first prolongations} 
of $A(\BS{t},\BS{u})$. 
\end{defin}
%%%%%%%%%%%%%%%%%%%%%%%%%%%%%%%%%%%%
\begin{thm}\label{THMd-closed}
Let $\CAL{O}_n(U)$ be the algebra of holomorphic functions on 
a connected open subset $U\subset\BB{C}^n$, 
$Z$ its finite-dimensional 
vector subspace and $U_Z^{\,\rm{bdl}}\subset U$ 
the set of bundle points of all 
the jet spaces of $Z$ (Definition \ref{DEFbundlepoint}). 
Then $U_Z^{\,\rm{bdl}}$ is a non-empty analytically 
open subset and $U_Z^{\,\rm{bdl}}\subset U_Z^{\,\rm{inv}}$.
\end{thm}
%%%%%%%%%%%%%%%%%%%%%%%%

\begin{proof}
We have seen that $U_Z^{\,\rm{bdl}}$ is a non-empty 
analytically open set in Lemma \ref{LEMbdl-nbd}.
The canonical projections 
$$
\Pi^k:\,{J}_k(\CAL{O}_U)\longrightarrow 
{J}_{k-1}(\CAL{O}_U),\qquad
\Sigma^k:\,R_k\longrightarrow R_{k-1}
$$
are constant rank homomorphisms over $U_Z^{\,\rm{bdl}}$ 
and they induce the inclusion 
$i^{\,k}:\,\Ker\Sigma^k\longrightarrow\Ker\Pi^k$ 
of locally free analytic sheaves by the following diagram: \\
%%%%%%%%%%%%%%%%%%%%
{\unitlength=1ex
\hspace*{3ex}
\begin{picture}(65,16)
%\thicklines
%%
\put(11,4){$\CAL{J}_{k-1}(\CAL{O}_U)$}
\path(20.5,5)(25.5,5)\path(21.3,5.4)(20.5,5)(21.3,4.6)
% horizon left
\put(26.6,4){$\CAL{J}_k(\CAL{O}_U)$}
\path(34,5)(39,5)\path(34.8,5.4)(34,5)(34.8,4.6)
% horizon middle
\path(48,5)(52.5,5)\path(48.3,5.4)(47.5,5)(48.3,4.6)
% horizon right
\put(53,4){0.}
\put(39.5,4){$\Ker\Pi^{k}$}
\put(14,11){$\CAL{R}_{k-1}$}
\path(20.5,12)(25.5,12)\path(21.3,12.4)(20.5,12)(21.3,11.6)
% horizon left
\put(27.6,11){$\CAL{R}_{k}$}
\path(33,12)(38,12)\path(33.8,12.4)(33,12)(33.8,11.6)
% horizon middle
\path(48,12)(52,12)\path(48.3,12.4)(47.5,12)(48.3,11.6)
% horizon right
\put(53,11){0}
\put(39,11){$\Ker\Sigma^{k}$}
\dashline[30]{.3}(41.5,10.5)(41.5,6)
\path(40.9,7)(41.5,6)(42.1,7)
   \put(42,8){\small$i^{\,k}$}% vertical
\path(28.5,10.5)(28.5,6)\path(27.9,7)(28.5,6)(29.1,7)
%   \put(29,8){\small$i^{k}$}% vertical
\path(15.5,10.5)(15.5,6)\path(14.9,7)(15.5,6)(16.1,7)
%   \put(16,8){\small$i^{k-1}$}% vertical
\put(22,12.5){\small$\Sigma^{k}$}
\put(22.5,3){\small$\Pi^{k}$}
%\put(26,0){$(l<k,\ k\ge k-1)$. \quad$l=k-1?$}
\end{picture}
}\\
%%%%%%%%%%%%%%%%%%%%%%%%%%%%%
A local section of $\Ker\Pi^k$ is expressed as 
$$
f(\BS{t},\BS{\tau})
=\sum_{|\BS{\nu}|=k}
\beta_{\BS{\nu}}(\BS{t})\BS{\tau}^{\BS{\nu}}
\quad
\left(\beta_{\BS{\nu}}(\BS{t}):=\dfrac{1}{\BS{\nu}!}
\dfrac{\partial^{|\BS{\nu}|}f}%
{\partial\BS{\tau}^{\BS{\nu}}}(\BS{t},\BS{0})\right)
,$$
where the monomials $\BS{\tau}^{\BS{\nu}}$ 
stand for the base of the fibre 
corresponding to coordinate $u_{\BS{\nu}}$. 
(We may write $\BS{\tau}^{\BS{\nu}}$ 
as $(d\BS{t})^{\odot\BS{\nu}}
=(dt_1)^{\odot\nu_1}\odot\cdots\odot (dt_n)^{\odot\nu_n}$, 
using the symmetric tensor product 
$\odot$, see Theorem \ref{THMvector}, (1).) 

The least space $Z_{\BS{b}}\DA$ is got by evaluating 
$\Ker\Sigma^k$ at $\BS{b}$. 
Hence $D$-invariance of $Z_{\BS{b}}\DA$ 
at degree $k$ reduces to the implication 
$$
f\in\Ker\Sigma^k\ \Longrightarrow\ 
\frac{\partial f}{\partial \tau_i}\in\Ker\Sigma^{k-1}
.$$
Take $f(\BS{t},\BS{\tau})\in\Ker\Sigma^k$ and 
any defining equation 
$$
A(\BS{t},\BS{u}):=\sum_{|\BS{\nu}|\le k-1} 
\alpha_{\BS{\nu}}(\BS{t})\cdot u_{\BS{\nu}}\in\CAL{L}^{k-1}(V)
$$
of $\CAL{R}_{k-1}$ over a neighbourhood $V$ of ${\BS{b}}$, 
we have 
\begin{gather*}
A\left(j{\,}^k\left(\FRAC{\partial f}{\partial\tau_i}
\right)\right)
=
\sum_{|\BS{\nu}|\le k-1} 
\alpha_{\BS{\nu}}(\BS{t})\cdot  u_{\BS{\nu}}
\left(j{\,}^k\left(\FRAC{\partial f}{\partial\tau_i}
\right)\right)
\\
=
\sum_{|\BS{\nu}|\le k-1}
(\nu_i+1)
\alpha_{\BS{\nu}}(\BS{t})\cdot u_{\BS{\nu}+\BS{e}_i}(j{\,}^kf)
\\
=
\sum_{|\BS{\nu}|\le k-1}
\left(
\alpha_{\BS{\nu}}^{(\BS{e}_i)}(\BS{t})\cdot u_{\BS{\nu}}
+(\nu_i+1)
\alpha_{\BS{\nu}}(\BS{t})\cdot u_{\BS{\nu}+\BS{e}_i}\right)
(j{\,}^kf)=0
.
\end{gather*}
Here, since $|\BS{\nu}|\le k-1$ implies 
$u_{\BS{\nu}}(j{\,}^kf)=0$, 
the third equality follows. 
The last equality follows from the first prolongations 
stated above. 
This proves that 
${\partial f}/{\partial \tau_i}\in \CAL{R}_{k-1,\BS{b}}$. 
The inclusion 
${\partial f}/{\partial \tau_i}\in\Ker\Sigma_{\BS{b}}^{k-1}$ 
follows from $f(\BS{t},\BS{\tau})\in\Ker\Sigma^k$ 
(homogeneity of $f$). 
Evaluating at $\BS{b}\in U_Z^k\cap U_Z^{k-1}$, 
we have shown that $Z_{\BS{b}}\DA$ is $D$-invariant at 
degree $k$. Then total $Z_{\BS{b}}\DA$ is $D$-invariant 
on the open subset 
$$
U_Z^{\,\rm{bdl}}
:=U_Z^0 \cap U_Z^1 \cap \cdots=U_Z^0 \cap\cdots\cap U_Z^{m-1}
$$$$
(m:=\dim_{\BB{C}}Z)
$$ 
described in Proposition \ref{LEMbdl-nbd}. 
\end{proof}
%%%%%%%%%%%%%%%%%%%%%%%%%%%
By Corollary \ref{CORwalker}, there exists $k\le m-1$ 
such that a 
system of linear PDEs of order $k+1$ is sufficient 
to select the 
sections of $Z$, namely $Z$ is defined by $\CAL{L}^k$. 
We may call the sheaf $\CAL{L}^k$ for such $k$ 
the \textit{defining system of PDEs} for $Z$. 
In particular, $\CAL{L}^{m-1}$ is a defining system of $Z$. 
%%%%%%%%%%%%%%%%%%%%%%%%%%%%
\section{Sesquilinear forms and weak topologies}
\label{section-weak}
%%%%%%%%%%%%%%%%%%%%%%%%%%%%%%%
Here we recall the sesquilinear form on the product 
$\BB{C}[\BS{\tau}]\times\BB{C}\{\BS{t}\}$ of 
the space of polynomials and the convergent 
power series algebra. 
The restriction of this sesquilinear form to the product of a 
finite-dimensional subspace $Z\subset\BB{C}\{\BS{t}\}$ and 
its least space $Z_{\BS{b}}\DA\subset\BB{C}[\BS{\tau}]$ 
is proved to be 
non-degenerate by de~Boor-Ron {\cite{REFbr1}}, {\cite{REFbr2}}. 
We decide the notation and certify their properties 
through the definite statements on bilinear forms in Bourbaki 
\cite{REFEVT}.\\

Let us define a complex bilinear form 
$$
B_{n,\BS{b}}:\BB{C}[\BS{\tau}]\times\CAL{O}_{n,\BS{b}}
\longrightarrow \BB{C},\quad
(p,\, f)\longmapsto B_{n,\BS{b}}\BLF{p}{f}
,$$
by 
$$
B_{n,\BS{b}}\BLFFF{\sum_{\mbox{\scriptsize finite}}
a_{\BS{\nu}}\BS{\tau}^{\BS{\nu}}}
{\sum b_{\BS{\mu}}(\BS{t}-\BS{b})^{\BS{\mu}}}
:=
\sum_{\mbox{\scriptsize finite}} \BS{\nu}! 
a_{\BS{\nu}}b_{\BS{\nu}}
,$$
where $\BS{\nu}!=\nu_1!\cdots\nu_m!$. In particular, 
\begin{gather*}
B_{n,\BS{b}}
\BLF{\BS{\tau}^{\BS{\nu}}}{(\BS{t}-\BS{b})^{\BS{\mu}}}
=
\frac
{\partial^{|\BS{\nu}|}(\BS{t}-\BS{b})^{\BS{\mu}}}
{\partial\BS{t}^{\BS{\nu}}}(\BS{b})
\\
=
\frac
{\partial^{\nu_1+\dots+\nu_n}(\BS{t}-\BS{b})^{\BS{\mu}}}
{\partial t_1^{\nu_1}\dots\partial t_n^{\nu_n}}(\BS{b})
=
\left\{
\begin{array}{ll}
\BS{\nu}! & (\BS{\nu}=\BS{\mu})\\ 
0      & (\BS{\nu}\neq\BS{\mu})
\end{array}
\right.
.\end{gather*}
Thus the monomial 
$\BS{\tau}^{\BS{\nu}}$ can be identified with 
the signed higher order 
derivative $(-1)^{|\BS{\nu}|}\delta_{\BS{b}}^{(\BS{\nu})}$ of 
the Dirac delta function supported at $\BS{b}\in\BB{C}^m$. 
The sign $(-1)^{|\BS{\nu}|}$ here originates 
from partial integral in the Schwarz distribution theory 
\cite[(II, 1;7)]{REFschwarz}. 

Now let $u$ denote the complex conjugations: 
\begin{gather*}
u:\BB{C}[\BS{\tau}]\longrightarrow \BB{C}[\BS{\tau}],\quad
\sum_{\mbox{\scriptsize finite}} 
a_{\BS{\nu}}\BS{\tau}^{\BS{\nu}}\longmapsto
\sum_{\mbox{\scriptsize finite}} 
\bar{a}_{\BS{\nu}}\BS{\tau}^{\BS{\nu}}
,\\
u:\CAL{O}_{n,\BS{b}}\longrightarrow\CAL{O}_{n,\BS{b}},\quad
\sum b_{\BS{\mu}}(\BS{t}-\BS{b})^{\BS{\mu}}\longmapsto
\sum \bar{b}_{\BS{\mu}}(\BS{t}-\BS{b})^{\BS{\mu}}
.\end{gather*}
The sesquilinear form 
$$
S_{n,\BS{b}}:\BB{C}[\BS{\tau}]\times\CAL{O}_{n,\BS{b}}
\longrightarrow \BB{C},\quad
(p,\,f)\longmapsto S_{n,\BS{b}}\SLF{p}{f}
$$
is defined by
$$
S_{n,\BS{b}}\SLF{p}{f}:=B_{n,\BS{b}}\BLF{p}{u(f)}
.$$
This can be expressed also as
$$
S_{n,\BS{b}}:=B_{n,\BS{b}}\COMP(\id_{\BB{C}[\BS{\tau}]},\, u)
.$$
Explicitly, we have 
$$
S_{n,\BS{b}}\SLFFFF{\sum_{\mbox{\scriptsize finite}} 
a_{\BS{\nu}} \BS{\tau}^{\BS{\nu}}}%
{\sum b_{\BS{\mu}}(\BS{t}-\BS{b})^{\BS{\mu}}}
=\sum \BS{\nu}!{a}_{\BS{\nu}}\bar{b}_{\BS{\nu}}
.$$

The \textit{weak topology} of $\BB{C}[\BS{\tau}]$ with respect 
to $B_{n,\BS{b}}$ 
is the coarsest topology such that the linear functionals 
$$
b_{\| f}:\BB{C}[\BS{\tau}]\longrightarrow\BB{C},\quad
p\longmapsto B_{n,\BS{b}}\BLF{p}{f}
$$
are continuous for all $f\in\CAL{O}_{n,\BS{b}}$. 
Similarly the \textit{weak topology} of $\CAL{O}_{n,\BS{b}}$ 
with respect to $B_{n,\BS{b}}$ 
is the coarsest topology such that the linear functionals 
$$
b_{p\|}:\CAL{O}_{n,\BS{b}}\longrightarrow\BB{C},\quad
f\longmapsto B_{n,\BS{b}}\BLF{p}{f}
$$ 
are continuous for all $p\in \BB{C}[\BS{\tau}]$. 
By these topologies, $\BB{C}[\BS{\tau}]$ and 
$\CAL{O}_{n,\BS{b}}$ become topological vector spaces. 
%%%%%%%%%%%%%
\begin{rem}\label{REMhermitian}
We have chosen the sesquilinear form expressed 
by the diagonal matrix with diagonal elements 
$\BS{\nu}!$ but note that this has not an intrinsic legitimacy. 
This form may be transformed to a different positive 
definite Hermitian matrix in another affine coordinates 
but the weak topologies remain unchanged. 
\end{rem}
%%%%%%%%%%%%%%%%%%

If $L$ is a vector subspace of $\BB{C}[\BS{\tau}]$, 
its annihilator space 
(orthogonal space) with respect to $B_{n,\BS{b}}$ 
is denoted by $L^{\top}$: 
$$
L^{\top}:=\{f:B_{n,\BS{b}}\BLF{p}{f}=0\mbox{ for all }p\in L\}
.$$ 
Similarly, if $K$ is a vector subspace of $\CAL{O}_{n,\BS{b}}$, 
its annihilator space 
with respect to $B_{n,\BS{b}}$ is denoted by $K^{\top}$: 
$$
K^{\top}:=\{p:B_{n,\BS{b}}\BLF{p}{f}=0\mbox{ for all }f\in K\}
.$$ 
These are vector subspaces. 
We adopt the abbreviation $L^{\top\top}=(L^{\top})^{\top}$ and 
$K^{\top\top}=(K^{\top})^{\top}$, etc. Recall that 
$L^{\top\top}$ and $K^{\top\top}$ are the weak closures of 
$L$ and $K$ 
respectively 
(Bourbaki \cite[4, \S 1, n${}^{\circ}$3, 4)]{REFEVT}). 

The \textit{weak topologies} of $\BB{C}[\BS{\tau}]$ and 
$\CAL{O}_{n,\BS{b}}$ 
with respect 
to $S_{n,\BS{b}}$ are defined in a similar way to those 
with respect to 
$B_{n,\BS{b}}$ and they become topological vector spaces. 
The weak topology of $\CAL{O}_{n,\BS{b}}$ is nothing 
but that of 
the coefficient-wise convergence. 
If $L\subset\BB{C}[\BS{\tau}]$ and 
$K\subset\CAL{O}_{n,\BS{b}}$ are vector 
subspaces, their annihilators spaces with respect to 
$S_{n,\BS{b}}$ 
are denoted by $L^{\bot}$, $K^{\bot}$ respectively: 
\begin{gather*}
L^{\bot}:=\{f:S_{n,\BS{b}}\SLF{p}{f}=0\mbox{ for all }p\in L\},
\\
K^{\bot}:=\{p:S_{n,\BS{b}}\SLF{p}{f}=0\mbox{ for all }f\in K\}
.\end{gather*}
These are also vector subspaces. 
We adopt the abbreviation $L^{\bot\bot}=(L^{\bot})^{\bot}$ and 
$K^{\bot\bot}=(K^{\bot})^{\bot}$, etc. 
It is obvious that 
\begin{gather*}
S_{n,\BS{b}}\SLF{p}{f}
=B_{n,\BS{b}}\BLF{p}{u(f)}
=\overline{B_{n,\BS{b}}\BLF{u(p)}{f}}
,\\
B_{n,\BS{b}}\BLF{p}{f}
=S_{n,\BS{b}}\SLF{p}{u(f)}
=\overline{S_{n,\BS{b}}\SLF{u(p)}{f}}
.\end{gather*}
Defining $s_{|f}$, $s_{p|}$ in a similar way as 
$b_{||f}$, $b_{p||}$, 
we have 
$$
s_{|u(f)}=b_{\| f},\quad s_{|f}=b_{\| u(f)},
\quad s_{p|}=b_{p\| }\COMP u,\quad b_{p\| }=s_{p|}\COMP u
.$$ 
Since $u$ is an involution 
(i.e. $u\COMP u$ is the identity) and 
commute with the operations ${}^{\bot}$ and ${}^{\top}$, 
we have the equalities  
$$
\{s_{|f}: f\in\CAL{O}_{n,\BS{b}}\}
=\{b_{|f}: f\in\CAL{O}_{n,\BS{b}}\}, 
\quad 
\{s_{p\|}: p\in\BB{C}[\BS{\tau}]\}=\{b_{p\| }: 
p\in\BB{C}[\BS{\tau}]\}
.$$ 
Hence we have the following.
%%%%%%%%%%%%%%%%%
\begin{prop}\label{PROorthogonal}
The weak topologies on $\BB{C}[\BS{\tau}]$ 
(resp. $\CAL{O}_{n,\BS{b}}$) 
with respect to $B_{n,\BS{b}}$ and $S_{n,\BS{b}}$ coincide.
If $L$ (resp. $K$) is a vector subspace of 
$\BB{C}[\BS{\tau}]$ (resp. 
$\CAL{O}_{n,\BS{b}}$), we have the equalities 
\begin{gather*}
L^{\bot}=u(L^{\top})=(u(L))^{\top},\quad 
K^{\top}=u(K^{\bot})=(u(K))^{\bot},
\\
K^{\bot}=u(K^{\top})=(u(K))^{\top},\quad 
L^{\top}=u(L^{\bot})=(u(L))^{\bot}
\end{gather*}
and hence 
$$
L^{\bot\bot}=L^{\top\top},\quad K^{\top\top}=K^{\bot\bot}
.$$
\end{prop}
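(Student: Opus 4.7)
The plan is to unpack the definitions and systematically exploit the two identities
$$
S_{n,\BS{b}}\SLF{p}{f}=B_{n,\BS{b}}\BLF{p}{u(f)}
=\overline{B_{n,\BS{b}}\BLF{u(p)}{f}}
$$
together with the fact that $u$ is an involution on each of $\BB{C}[\BS{\tau}]$ and $\CAL{O}_{n,\BS{b}}$ (so $u$ is a $\BB{C}$-antilinear bijection, hence sends vector subspaces to vector subspaces and satisfies $u(u(M))=M$ for any subspace $M$).

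First I would handle coincidence of the weak topologies. A sub-basis for the weak topology on $\BB{C}[\BS{\tau}]$ with respect to $B_{n,\BS{b}}$ is given by the family of functionals $\{b_{\|f}:f\in\CAL{O}_{n,\BS{b}}\}$, while the one for $S_{n,\BS{b}}$ is $\{s_{|f}:f\in\CAL{O}_{n,\BS{b}}\}$. The relation $s_{|f}=b_{\|u(f)}$, combined with the fact that $u$ is a bijection of $\CAL{O}_{n,\BS{b}}$, shows that these two families of linear functionals are set-theoretically equal, so they induce the same coarsest topology; the argument on $\CAL{O}_{n,\BS{b}}$ is identical using $s_{p|}=t_{p\|}\COMP u$ and bijectivity of $u$ on $\BB{C}[\BS{\tau}]$.

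Next I would derive the annihilator identities by direct chasing of the definitions. For $L\subset\BB{C}[\BS{\tau}]$ and $f\in\CAL{O}_{n,\BS{b}}$, the equivalence
$$
f\in L^{\bot}\Longleftrightarrow S_{n,\BS{b}}\SLF{p}{f}=0\ (\forall p\in L)
\Longleftrightarrow B_{n,\BS{b}}\BLF{p}{u(f)}=0\ (\forall p\in L)
\Longleftrightarrow u(f)\in L^{\top}
$$
gives $L^{\bot}=u(L^{\top})$; using instead the second form $S\SLF{p}{f}=\overline{B\BLF{u(p)}{f}}$ (and the fact that vanishing of a complex number is preserved under conjugation) yields $L^{\bot}=(u(L))^{\top}$. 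The four identities for $K\subset\CAL{O}_{n,\BS{b}}$ are obtained by the symmetric calculation starting from the relations for $B$ in terms of $S$. Each of the eight equalities in the proposition is thus a one-line verification.

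Finally, for the double-annihilator equalities I would just iterate: applying $L\mapsto u(L^{\top})$ twice gives
$$
L^{\bot\bot}=u\bigl((u(L^{\top}))^{\top}\bigr)=u\bigl(u(L^{\top\top})\bigr)=L^{\top\top},
$$
where the middle step uses the already-proved relation $u(M)^{\top}=M^{\bot}\ldots$ wait, more directly, $L^{\bot\bot}=(u(L^{\top}))^{\bot}=u((u(L^{\top}))^{\top})$, but $(u(L^{\top}))^{\top}=(L^{\top})^{\bot}$ by the dual identity already established for $K=L^{\top}$, and then $u(L^{\top})^{\bot}\ldots$ In practice the cleanest way is to observe that any of the eight identities expresses $(\cdot)^{\bot}$ as $u\COMP(\cdot)^{\top}$ or $(\cdot)^{\top}\COMP u$; composing two such expressions and using $u\COMP u=\id$ immediately gives $L^{\bot\bot}=L^{\top\top}$ and $K^{\bot\bot}=K^{\top\top}$. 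There is no genuine obstacle here: the entire proposition is a bookkeeping consequence of the intertwining $S=B\COMP(\id,u)$ together with $u^2=\id$, and the main care required is simply to keep straight which variable $u$ acts on when passing between $B$ and $S$.
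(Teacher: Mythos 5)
Your proposal is correct and takes essentially the same route the paper does: the paper states the intertwining relations $s_{|f}=b_{\|u(f)}$, $s_{p|}=t_{p\|}\COMP u$, etc., observes that $u$ is an involution, and treats the proposition as an immediate consequence, which is exactly what you spell out. The brief wobble in your last paragraph (``wait, more directly\ldots'') is harmless, and you recover with the right formulation: since ${}^{\bot}$ factors as $u\COMP{}^{\top}$ (or ${}^{\top}\COMP u$), composing twice and using $u\COMP u=\id$ gives the double-annihilator equalities at once.
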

%%%%%%%%%%%%%%%%%

By the first assertion, we have no need to refer to forms 
$B_{n,\BS{b}}$ and $S_{n,\BS{b}}$ for weak topologies. 
Now we can deduce a few properties of the space of 
annihilators with 
respect to the sesquilinear forms from those with respect to 
bilinear forms (Bourbaki \cite[4, \S 1, n${}^{\circ} 4$]{REFEVT}). 
%%%%%%%%%%%%%
\begin{cor}\label{CORorthogonal}
If $L$ (resp. $K$) is a vector subspace of 
$\BB{C}[\BS{\tau}]$ (resp. 
$\CAL{O}_{n,\BS{b}}$), we have the following.
\begin{enumerate}
\item
$K^{\bot}$, $L^{\bot}$ are all weakly closed. 
\item
$L^{\bot\bot}$ is the weak closure of $L$ in 
$\BB{C}[\BS{\tau}]$ and 
$K^{\bot\bot}$ is the weak closure of $K$ in 
$\CAL{O}_{n,\BS{b}}$. 
\item
$L^{\bot\bot\bot}=L^{\bot},\quad K^{\bot\bot\bot}=K^{\bot}$.
\end{enumerate}
\end{cor}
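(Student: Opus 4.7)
The plan is to deduce all three statements from the corresponding facts for bilinear forms in Bourbaki via the translation given by Proposition \ref{PROorthogonal}. The key structural input is that the weak topologies induced by $B_{n,\BS{b}}$ and by $S_{n,\BS{b}}$ coincide on both $\BB{C}[\BS{\tau}]$ and $\CAL{O}_{n,\BS{b}}$, and that the annihilators are related by the involution $u$ via $L^{\bot}=u(L^{\top})$ and $K^{\bot}=u(K^{\top})$. Since $u$ is a continuous involution with respect to the weak topology (it only conjugates the coefficients, and the linear functionals $s_{|f}$ and $b_{\|u(f)}$ agree), any topological property for the bilinear annihilators will transfer automatically to the sesquilinear ones.

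For (1), I would simply observe that
\[
L^{\bot}=\bigcap_{p\in L}\{f\in\CAL{O}_{n,\BS{b}}:\,S_{n,\BS{b}}\SLF{p}{f}=0\}=\bigcap_{p\in L}\Ker s_{p|},
\]
an intersection of kernels of linear functionals that are continuous by the very definition of the weak topology associated with $S_{n,\BS{b}}$. The same argument handles $K^{\bot}$. Alternatively, one can use $L^{\bot}=u(L^{\top})$, the fact (Bourbaki \cite{REFbourbaki}, II, \S1, n${}^{\circ}$4) that $L^{\top}$ is weakly closed for $B_{n,\BS{b}}$, and coincidence of the two weak topologies from Proposition \ref{PROorthogonal}.

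For (2), I would apply the bipolar theorem for $B_{n,\BS{b}}$: in loc.\ cit., $L^{\top\top}$ is the weak closure of $L$. By Proposition \ref{PROorthogonal} we have $L^{\bot\bot}=L^{\top\top}$, and since the two weak topologies agree, $L^{\bot\bot}$ is exactly the weak closure of $L$ in $\BB{C}[\BS{\tau}]$. The same argument applied with $K$ in place of $L$ yields the second half. For (3), I would apply (2) to the subspace $L^{\bot}\subset\BB{C}[\BS{\tau}]$, giving that $(L^{\bot})^{\bot\bot}$ is the weak closure of $L^{\bot}$; but by (1) this subspace is already weakly closed, so $L^{\bot\bot\bot}=L^{\bot}$, and analogously for $K$.

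I do not anticipate a serious obstacle: the whole proof is a mechanical translation through the involution $u$, and the only point that requires any care is verifying that the weak topology attached to $S_{n,\BS{b}}$ really does coincide with that attached to $B_{n,\BS{b}}$, but this is precisely the first assertion of Proposition \ref{PROorthogonal}, which I may invoke directly. Thus the proof will be essentially a three-line citation of Bourbaki combined with the identities $L^{\bot}=u(L^{\top})$ and $K^{\bot}=u(K^{\top})$.
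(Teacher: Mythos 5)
Your proposal is correct and follows essentially the same route as the paper: the paper's proof is precisely a one-line appeal to Bourbaki's bilinear results transferred through Proposition~\ref{PROorthogonal} (coincidence of the two weak topologies and the identities $L^{\bot}=u(L^{\top})$, $K^{\bot}=u(K^{\top})$), which is exactly what you spell out, only in fuller detail. One small slip: in your argument for (3), $L^{\bot}$ is a subspace of $\CAL{O}_{n,\BS{b}}$, not of $\BB{C}[\BS{\tau}]$, so you are applying the second half of (2); the conclusion is unaffected.
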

%%%%%%%%%%%%%%%%%%%%%%%%%%%%%%%

We can easily prove the following. 
%%%%%%%%%%%%%%%%%
\begin{prop}\label{PROnondegenerate}
The form
$$
S_{n,\BS{b}}:\BB{C}[\BS{\tau}]\times\CAL{O}_{n,\BS{b}}
\longrightarrow \BB{C}
$$
is a non-degenerate sesquilinear form, i.e. 
$\BB{C}[\BS{\tau}]^{\bot}=\{0\}$ and 
$\CAL{O}_{n,\BS{b}}^{\bot}=\{0\}$. 
\end{prop}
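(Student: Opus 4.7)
The plan is to test the annihilator conditions directly against the monomial basis. The essential input is the explicit formula already recorded in the definition of $B_{n,\BS{b}}$, namely
$$
B_{n,\BS{b}}\BLF{\BS{\tau}^{\BS{\nu}}}{(\BS{t}-\BS{b})^{\BS{\mu}}}
=\BS{\nu}!\,\delta_{\BS{\nu},\BS{\mu}},
$$
which, combined with the definition $S_{n,\BS{b}}\SLF{p}{f}=B_{n,\BS{b}}\BLF{p}{u(f)}$ and the fact that $u$ acts trivially on the real monomials $(\BS{t}-\BS{b})^{\BS{\mu}}$, yields
$$
S_{n,\BS{b}}\SLF{\BS{\tau}^{\BS{\nu}}}{(\BS{t}-\BS{b})^{\BS{\mu}}}
=\BS{\nu}!\,\delta_{\BS{\nu},\BS{\mu}}.
$$

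First I would handle $\BB{C}[\BS{\tau}]^{\bot}=\{0\}$. Let $f=\sum b_{\BS{\mu}}(\BS{t}-\BS{b})^{\BS{\mu}}\in\BB{C}[\BS{\tau}]^{\bot}$. For each fixed multi-index $\BS{\nu}_0\in\BB{N}_0^n$ I test with $p=\BS{\tau}^{\BS{\nu}_0}$; the orthogonality relation above collapses the series and gives $\BS{\nu}_0!\,\overline{b}_{\BS{\nu}_0}=0$. Since $\BS{\nu}_0!\neq 0$ this forces $b_{\BS{\nu}_0}=0$ for every $\BS{\nu}_0$, hence $f=0$.

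The other direction $\CAL{O}_{n,\BS{b}}^{\bot}=\{0\}$ is strictly easier because elements of $\BB{C}[\BS{\tau}]$ have only finitely many nonzero coefficients. Given $p=\sum_{\text{finite}}a_{\BS{\nu}}\BS{\tau}^{\BS{\nu}}\in\CAL{O}_{n,\BS{b}}^{\bot}$, I test against $f=(\BS{t}-\BS{b})^{\BS{\nu}_0}$ for each $\BS{\nu}_0$ occurring in $p$, extracting $\BS{\nu}_0!\,a_{\BS{\nu}_0}=0$ and hence $a_{\BS{\nu}_0}=0$, so $p=0$.

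There is no real obstacle: both statements reduce to the fact that $\{\BS{\tau}^{\BS{\nu}}\}$ and $\{(\BS{t}-\BS{b})^{\BS{\mu}}\}$ are biorthogonal (up to the factor $\BS{\nu}!$) under $S_{n,\BS{b}}$, so the nondegeneracy is already built into the definition of the pairing. The only point to be a little careful about is that the test element $(\BS{t}-\BS{b})^{\BS{\nu}_0}$ lies in $\CAL{O}_{n,\BS{b}}$ and the test element $\BS{\tau}^{\BS{\nu}_0}$ lies in $\BB{C}[\BS{\tau}]$, which is immediate; and that the complex conjugation in $u$ preserves vanishing of coefficients, which is obvious.
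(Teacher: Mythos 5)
Your proof is correct and is the natural (and essentially only) argument: the paper itself offers no proof, merely asserting the proposition is easy, and the intended reasoning is exactly the biorthogonality of the monomial bases under $S_{n,\BS{b}}$ that you exploit. One small caveat worth noting for precision: when you test $f\in\BB{C}[\BS{\tau}]^{\bot}\subset\CAL{O}_{n,\BS{b}}$ against all $\BS{\tau}^{\BS{\nu}_0}$, you are tacitly using that a convergent power series with all Taylor coefficients zero is the zero germ; this is true, of course, and is in fact the characterisation $\bigcap_i\FRAK{m}_{n,\BS{b}}^i=0$ recorded in \S\ref{sectionLEAST}, but it is the only point where something beyond formal linear algebra enters, so it deserves a word. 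Otherwise your handling of the conjugation $u$ (trivial on real monomials) and the finiteness of supports is exactly right.
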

%%%%%%%%%%%%%%%%%%%

Let us follow the notation in \S\ref{sectionLEAST}: 
$$
\BS{\tau}^{\BS{\nu}}:=(\BS{t}-\BS{b})^{\BS{\nu}}_{\BS{b}}\DA 
\in \FRAK{m}_{n,\BS{b}}^{|\BS{\nu}|}/
\FRAK{m}_{n,\BS{b}}^{|\BS{\nu}|+1}
\subset\BB{C}[\BS{\tau}]=\CAL{O}_{n,\BS{b}}\DA
$$
for the elements of the least spaces. 
The following is a most natural and efficient for 
construction of 
a dual space of a finite-dimensional subspace of 
$\CAL{O}_{n,\BS{b}}$. 
%%%%%%%%%%%%%%%%%%%%%%%%%%%%%%%
\begin{thm}({\rm de~Boor-Ron \cite[Theorem 5.8]{REFbr2}})%
\label{THMminimal}
Let $Z$ be a vector subspace of 
$\CAL{O}_{n,\BS{b}}$. 
Then the sesquilinear form 
$$
S_Z:\,Z_{\BS{b}}\DA\times Z\longrightarrow \mathbb{C}
$$
obtained as the restriction of 
$S_{n,\BS{b}}:\,\BB{C}[\BS{\tau}]
\times\CAL{O}_{n,\BS{b}}\longrightarrow \BB{C}$ 
is non-degenerate, i.e. 
$\BB{C}[\BS{\tau}]^{\bot_Z}
=\{0\}$, $\CAL{O}_{n,\BS{b}}^{\bot_Z}=\{0\}$, 
where $\bot_Z$ denotes the space of annihilators 
with respect to $S_Z$. 
\end{thm}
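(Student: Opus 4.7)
The plan is to prove non-degeneracy directly on each side, exploiting the fact that the sesquilinear form $S_{n,\BS{b}}$ respects the grading: the formula $S_{n,\BS{b}}\SLF{\BS{\tau}^{\BS{\nu}}}{(\BS{t}-\BS{b})^{\BS{\mu}}}=\BS{\nu}!\,\delta_{\BS{\nu}\BS{\mu}}$ shows that if $p$ is homogeneous of degree $d$ then $S_{n,\BS{b}}\SLF{p}{f}$ depends only on the degree-$d$ coefficients of $f$ in its expansion around $\BS{b}$.

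First I would handle non-degeneracy on the $Z$-side. Given $f\in Z$ with $f\neq 0$, set $d:=\ord_{\BS{b}}f$ and write $f=\sum_{\BS{\mu}}b_{\BS{\mu}}(\BS{t}-\BS{b})^{\BS{\mu}}$. Then $p:=f_{\BS{b}}\DA=\sum_{|\BS{\mu}|=d}b_{\BS{\mu}}\BS{\tau}^{\BS{\mu}}$ lies in $Z_{\BS{b}}\DA$ by the very definition of the least space, and the explicit formula for $S_{n,\BS{b}}$ yields $S\SLF{p}{f}=\sum_{|\BS{\mu}|=d}\BS{\mu}!\,|b_{\BS{\mu}}|^2>0$. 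Hence $f\notin (Z_{\BS{b}}\DA)^{\bot_Z}$, and we actually exhibit a dual vector, namely $f_{\BS{b}}\DA$ itself.

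For the $Z_{\BS{b}}\DA$-side the decisive observation is that $Z_{\BS{b}}\DA$ is a \emph{graded} subspace of $\BB{C}[\BS{\tau}]$. Indeed each generator $f_{\BS{b}}\DA$ is homogeneous of degree $\ord_{\BS{b}}f$, so collecting the spanning relation by degree shows that if $p\in Z_{\BS{b}}\DA$ decomposes uniquely as $p=p_{d_1}+\cdots+p_{d_r}$ with $d_1<\cdots<d_r$ and each $p_{d_i}\neq 0$ homogeneous, then every $p_{d_i}$ again lies in $Z_{\BS{b}}\DA$. The non-obvious idea is to work with the \emph{highest} degree $d:=d_r$ rather than the lowest. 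I choose $f_j\in Z$ with $\ord_{\BS{b}}f_j=d$ and scalars $\lambda_j\in\BB{C}$ with $p_d=\sum_j\lambda_j(f_j)_{\BS{b}}\DA$, and put $g:=\sum_j\lambda_j f_j\in Z$; then $g_{\BS{b}}\DA=p_d$, and in particular $\ord_{\BS{b}}g=d$.

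Finally I would compute $S\SLF{p}{g}=\sum_{i=1}^{r}S\SLF{p_{d_i}}{g}$: by the bi-graded observation, for $i<r$ the term $S\SLF{p_{d_i}}{g}$ depends only on the degree-$d_i$ coefficients of $g$, which vanish because $\ord_{\BS{b}}g=d>d_i$; while for $i=r$ the same positive quantity as in the first step appears. Therefore $S\SLF{p}{g}>0$, so $p\notin Z^{\bot_Z}$. The only genuine obstacle is realising that one should pick the top-degree homogeneous component of $p$, not the bottom one: the latter choice would leave uncontrolled higher-degree cross-terms, whereas the former makes the ``error terms'' vanish automatically from $\ord_{\BS{b}}g=d$.
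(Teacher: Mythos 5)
Your argument is correct and follows essentially the same route as the paper's: on the $Z$-side you test $f$ against its own least part $f_{\BS{b}}\DA$, and on the $Z_{\BS{b}}\DA$-side you isolate the \emph{top}-degree homogeneous part $p_{\BS{b}}\UA$ of $p$, realize it as $g_{\BS{b}}\DA$ for a suitable $g\in Z$, and test $p$ against $g$. The one place you add useful detail that the paper leaves implicit is the justification that such a single $g$ exists (collecting generators $f_j$ of order exactly $d$ and observing that the non-linear least operator behaves linearly on combinations of equal-order elements once the degree-$d$ part is non-zero); the paper simply asserts its existence.
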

%%%%%%%%%%%%%%%%%%%%%%%

\begin{proof}
Suppose that $f\neq 0$ belongs to $Z$ and annihilates 
$Z_{\BS{b}}\DA$. We can express $f$ as 
$$
f=\sum_{|\BS{\nu}|\ge d}a_{\BS{\nu}}\BS{\tau}^{\BS{\nu}}\quad 
(d:=\ord_{\BS{b}} f<\infty)
.$$ 
Then we have 
$$
0=S_Z({f_{\BS{b}}\DA,f})
=\sum_{|\BS{\nu}|=d}\BS{\nu}! |a_{\BS{\nu}}|^2\neq 0
,$$
a contradiction. 
This proves that $(Z_{\BS{b}}\DA)^{\bot_Z}=\{0\}$. 

Suppose that $p\neq 0$ belongs to $Z_{\BS{b}}\DA$ 
and annihilates $Z$. 
Let $p_{\BS{b}}\UA\neq 0$ denote the highest degree 
homogeneous part of $p$ 
at $\BS{b}$. 
Since $Z_{\BS{b}}\DA$ is generated by homogeneous elements, 
there exists $g\in Z$ such that $p_{\BS{b}}\UA=g_{\BS{b}}\DA$. 
Then 
$$
0<S_n\SLF{p_{\BS{b}}\UA}{p_{\BS{b}}\UA}
=S_n\SLF{p_{\BS{b}}\UA}{g_{\BS{b}}\DA}=S_Z\SLF{p}{g}=0
,$$
a contradiction. This proves that $Z^{\bot}=\{0\}$. 
\end{proof}

Let us recall an elementary fact: 
existence of the \textit{adjoint mapping}. 
%%%%%%%%%%%%%%%%%%
\begin{lem}\label{LEMtrans-adj}
Let 
$S:Q\times Z\longrightarrow\BB{C}$ and 
$S':Q'\times Z'\longrightarrow\BB{C}$ 
be non-degenerate sesquilinear forms defined on products of 
topological vector spaces equipped with the weak topologies 
with respect to $S$ and $S'$ respectively and let 
$\kappa:Z\longrightarrow Z'$ 
be a linear mapping. Then the flowing are equivalent. 
\begin{enumerate}
\item
$\kappa$ is weakly continuous.
\item
There exists a linear mapping 
${}^s\kappa:Q'\longrightarrow Q$ 
(which we call the adjoint of $\kappa$) such that 
$$
\forall p\in Q,\ \forall f\in Z':\ 
S\SLF{\kappa(p)}{f}=S'\SLF{p}{{}^s\kappa(f)}
.$$
\end{enumerate}
The linear mapping 
${}^s\kappa$ which satisfies (2) is unique for $\kappa$. 
Each of these conditions implies that ${}^s\kappa$ is weakly 
continuous and ${}^{ss}\kappa:={}^s(^s\kappa)=\kappa$. 
\end{lem}
%%%%%%%%%%%%%%%%%%%%%%%

\begin{proof}
Let $u$ denote the complex conjugation 
$\sum a_{\BS{\mu}}(\BS{t}-\BS{b})^{\BS{\mu}}\longmapsto
\sum \bar{a}_{\BS{\mu}}(\BS{t}-\BS{b})^{\BS{\mu}}$ and 
define the bilinear forms 
$B:Q\times Z\longrightarrow\BB{C}$ and 
$B':Q'\times Z'\longrightarrow\BB{C}$ 
by 
$$
B\BLF{p}{f}:=S\SLF{p}{u(f)},\quad B'\BLF{p}{f}:=S'\SLF{p}{u(f)}
$$
(cf. the third paragraph of \S \ref{section-weak}). 
If $\kappa$ is weakly continuous, 
it has the weakly continuous transposed 
mapping i.e. there exists a linear mapping 
${}^t\kappa:Q'\longrightarrow Q$ such that 
$$
\forall p\in Q,\ \forall f\in Z':\ 
B\BLF{\kappa(p)}{f}=B'\BLF{p}{{}^t\kappa(f)}
,$$
by Bourbaki \cite[4, \S 4, n${}^{\circ} 1$]{REFEVT}. 
We have only to put ${}^s\kappa:=u\COMP{}^t\kappa\COMP u$. 
The rest follow from the corresponding properties of 
bilinear forms. 
\end{proof}
%%%%%%%%%%%%%%%%%%
\begin{lem}\label{LEMadjoint}
Multiplication by $p(\BS{\tau})$ in $\BB{C}[\BS{\tau}]$ is 
the adjoint 
of the differential operator $u(p)(\BS{\partial}_{\BS{t}})$ 
$(\BS{\partial}_{\BS{t}}
:=({\partial}/{\partial t_1},\dots,{\partial}/{\partial t_n}))$ 
on $\CAL{O}_{n,\BS{b}}$, with respect to $S_{n,\BS{b}}$. 
In particular, multiplication 
by $\tau_i$ is the adjoint of differentiation by $t_i$. 
Similarly, multiplication by $f(\BS{t})$ in 
$\CAL{O}_{n,\BS{b}}$ is 
the adjoint of the infinite differential operator 
$u(f)(\BS{\partial}_{\BS{\tau}})$ $(\BS{\partial}_{\BS{\tau}}
:=({\partial}/{\partial\tau_1},\dots,
{\partial}/{\partial\tau_n}))$ 
on $\BB{C}[\BS{\tau}]$, with respect to $S_{n,\BS{b}}$. 
(Such an operator is valid for polynomials.) 
In particular, multiplication 
by $t_i$ is the adjoint of differentiation by $\tau_i$. 
\end{lem}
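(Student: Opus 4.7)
The plan is to verify the two adjointness identities
\begin{equation*}
S_{n,\BS{b}}\SLF{p\cdot q}{f}=S_{n,\BS{b}}\SLF{q}{u(p)(\BS{\partial}_{\BS{t}})f},\qquad
S_{n,\BS{b}}\SLF{p}{f\cdot g}=S_{n,\BS{b}}\SLF{u(f)(\BS{\partial}_{\BS{\tau}})p}{g}
\end{equation*}
for $p,q\in\BB{C}[\BS{\tau}]$ and $f,g\in\CAL{O}_{n,\BS{b}}$; these express exactly the two adjoint relations claimed in the lemma. Since $S_{n,\BS{b}}$ is sesquilinear and the conjugations $u$ appearing on the right compensate exactly for complex-linear versus conjugate-linear dependencies, both sides of each identity are sesquilinear jointly in the remaining data. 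It therefore suffices to check them on monomial basis elements $p=\BS{\tau}^{\BS{\alpha}}$, $q=\BS{\tau}^{\BS{\nu}}$, $f=(\BS{t}-\BS{b})^{\BS{\mu}}$, $g=(\BS{t}-\BS{b})^{\BS{\lambda}}$, on which $u$ acts as the identity and all complex-conjugate bookkeeping disappears.

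For the first identity, the defining value $S_{n,\BS{b}}\SLF{\BS{\tau}^{\BS{\rho}}}{(\BS{t}-\BS{b})^{\BS{\sigma}}}=\BS{\rho}!\,\delta_{\BS{\rho},\BS{\sigma}}$ makes the left-hand side equal to $(\BS{\alpha}+\BS{\nu})!\,\delta_{\BS{\alpha}+\BS{\nu},\BS{\mu}}$. Combined with the elementary rule $\BS{\partial}_{\BS{t}}^{\BS{\alpha}}(\BS{t}-\BS{b})^{\BS{\mu}}=\frac{\BS{\mu}!}{(\BS{\mu}-\BS{\alpha})!}(\BS{t}-\BS{b})^{\BS{\mu}-\BS{\alpha}}$ (zero when $\BS{\mu}\not\ge\BS{\alpha}$), the right-hand side reduces to $\frac{\BS{\mu}!}{(\BS{\mu}-\BS{\alpha})!}\BS{\nu}!\,\delta_{\BS{\nu},\BS{\mu}-\BS{\alpha}}$, which agrees with $(\BS{\alpha}+\BS{\nu})!\,\delta_{\BS{\alpha}+\BS{\nu},\BS{\mu}}$ on the support of the Kronecker delta. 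The second identity follows by the mirror calculation with the roles of $\BS{\tau}$ and $\BS{t}-\BS{b}$ swapped; the specialisations $p=\tau_i$ and $f=t_i-b_i$ yield the two concrete adjointness statements advertised in the lemma.

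The only point worth flagging — hardly an obstacle — is that when $f\in\CAL{O}_{n,\BS{b}}$ has genuinely infinite power-series support, the symbol $u(f)(\BS{\partial}_{\BS{\tau}})$ denotes a formal differential operator of infinite order. Its action on a polynomial $p\in\BB{C}[\BS{\tau}]$ is nevertheless well-defined because only the finitely many terms $\overline{c_{\BS{\mu}}}\BS{\partial}_{\BS{\tau}}^{\BS{\mu}}$ with $|\BS{\mu}|\le\deg p$ survive, so $u(f)(\BS{\partial}_{\BS{\tau}})p\in\BB{C}[\BS{\tau}]$ and the right-hand pairing is finite and well-posed. This is exactly the content of the parenthetical warning in the statement, and it legitimises the monomial-by-monomial reduction used above.
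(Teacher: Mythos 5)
Your proof is correct and follows essentially the same route as the paper's own argument: both verify the adjointness identities by direct computation with the defining formula $S_{n,\BS{b}}\SLF{\BS{\tau}^{\BS{\rho}}}{(\BS{t}-\BS{b})^{\BS{\sigma}}}=\BS{\rho}!\,\delta_{\BS{\rho},\BS{\sigma}}$, prove one of the two assertions explicitly and note the other is analogous. The only cosmetic difference is that you reduce to monomial basis elements via sesquilinearity before computing, whereas the paper carries the general multi-indexed sums through the calculation; the underlying computation is identical.
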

%%%%%%%%%%%%%%%%%%%%%%%

\begin{proof} 
First note that these operations are weakly continuous and 
hence they have weakly continuous adjoint by 
Lemma \ref{LEMtrans-adj}. 
The second assertion is obvious from the direct calculations: 
\begin{gather*}
S_{n,\BS{b}}\SLFFFF%
{\sum_{\BS{\lambda}}c_{\BS{\lambda}}\BS{\partial}_{\BS{\tau}}^{\BS{\lambda}}
\sum_{\BS{\nu}}a_{\BS{\nu}}\BS{\tau}^{\BS{\nu}}
}{
\sum_{\BS{\mu}} b_{\BS{\mu}}\BS{t}^{\BS{\mu}}}
=
\sum_{\BS{\nu},\BS{\lambda}} 
(\BS{\lambda}+\BS{\nu})!c_{\BS{\lambda}}
{a}_{\BS{\lambda}+\BS{\nu}}\bar{b}_{\BS{\nu}}
\\
=
\sum_{\BS{\nu}}
\sum_{\BS{\mu} \geq \BS{\nu}} 
\BS{\mu}! {a}_{\BS{\mu}}\bar{b}_{\BS{\nu}}c_{\BS{\mu}-\BS{\nu}}
=
S_{n,\BS{b}}\SLFFFF{\sum_{\BS{\nu}}
a_{\BS{\nu}}\BS{\tau}^{\BS{\nu}}
}{
\sum_{\BS{\lambda}}\OL{c}_{\BS{\lambda}}\BS{t}^{\BS{\lambda}}
\sum_{\BS{\mu}} b_{\BS{\mu}}\BS{t}^{\BS{\mu}}}
,\end{gather*}
where $\geq$ denotes the product order of the order 
of the set of integers. 
The proof of the first inequality is quite similar. 
\end{proof}
%%%%%%%%%%%%%%%%%%%%%%%%%%%%%%%%%%%%%%%%
\section{Weak topologies of analytic algebras}%
\label{section-continuity}
%%%%%%%%%%%%%%%%%%%%%%%%%%%%%%%
Here we recall some topological properties of analytic 
algebras over $\BB{C}$ and their homomorphisms. 
The main reference is Grauert-Remmert \cite{REFgr}.\\

An algebra isomorphic to a factor algebra of 
a convergent power series algebra 
by a proper ideal 
is called \textit{(local) analytic algebra.} 
Take an analytic algebra 
$A:=\CAL{O}_{n,\BS{b}}/I$. This is a local $\BB{C}$-algebra 
in the sense it has a unique maximal ideal $\mathfrak{m}_A$, 
which consists of the residue classes of elements of 
$\FRAK{m}_{n,\BS{b}}$, 
and $A$ is a vector space over the subalgebra 
$\BB{C}\subset A$ such that the canonical homomorphism 
$$
\BB{C}\longrightarrow A/\mathfrak{m}_A,\qquad
\lambda\longmapsto\lambda\cdot 1\mod \mathfrak{m}_A
$$ 
of the fields is an isomorphism. A homomorphism of algebras 
is always assumed to be unitary: $1\mapsto 1$. 
Then any homomorphism 
$\varphi:B\rightarrow A$ of local $\mathbb{C}$-algebras is local: $\varphi(\mathfrak{m}_B)\subset\mathfrak{m}_A$. 

Following Grauert-Remmert \cite{REFgr}, let us see 
that the weak topology on $A$ is independent of the expression 
$A=\CAL{O}_{n,\BS{b}}/I$. 
Let $\pi_i:A\longrightarrow A/\mathfrak{m}_A^i$ 
$(i\in\BB{N})$ be the factor epimorphism. 
The analytic algebra $A/\FRAK{m}_A^i$ is 
a finite-dimensional $\BB{C}$-vector space and 
hence it has a unique 
structure of a topological vector space by 
Bourbaki \cite[1, \S 2, n${}^{\circ}3$ Theorem 2]{REFEVT}. 
We give $A$ the coarsest topology such that all the $\pi_i$ 
$(i\in\BB{N})$ are continuous 
and call it the \textit{projective topology}. 
Of course this is independent of 
the expression $\CAL{O}_{n,\BS{b}}/I$. Note that the projective 
topology is expressed as ``schwache Topologie" in 
\cite{REFgr}, which will be proved to be the same as our 
weak topology by the following lemma. 

%%%%%%%%%%%%%%%%%%%%%%%
\begin{lem}\label{LEMstrict}
Let $I\subset\CAL{O}_{n,\BS{b}}$ be an ideal and 
$A:=\CAL{O}_{n,\BS{b}}/I$ 
the analytic algebra. Then we have the following.
\begin{enumerate}
\item
On a regular analytic algebra $\CAL{O}_{n,\BS{b}}$, 
the projective topology and the weak topology coincide. 
\item
The ideal $I$ is closed with respect to both 
the projective topology and the weak one with respect to 
$S_{n,\BS{b}}:\BB{C}[\BS{\tau}]
\times\CAL{O}_{n,\BS{b}}\longrightarrow \BB{C}$. 
\item
The sesquilinear form
$S_{n,\BS{b}}$ induces a non-degenerate one 
$$
S_A:\,I^{\bot_n}\times A\longrightarrow \BB{C}
.$$
\item
The weak topology of $I^{\bot_n}$ (resp. of $A$) with respect 
to this $S_A$ coincides with the relative one from 
$\CAL{O}_{n,\BS{b}}$ 
(resp. the factor one of $\CAL{O}_{n,\BS{b}}$). 
\item
The projective topology of $A:=\CAL{O}_{n,\BS{b}}/I$ 
coincides with the factor topology of $\CAL{O}_{n,\BS{b}}$ 
with the projective topology. 
\item
The weak topology and the projective topology on $A$ coincide 
and the factor epimorphism 
$\CAL{O}_{n,\BS{b}}\longrightarrow A$ is 
always a weakly continuous and open mapping.
\end{enumerate}
\end{lem}
% \vspace*{2ex}
%%%%%%%%%%%%%%%%%%%%%%%%%

\begin{proof}
The first assertion is easy to see.  Closeness is proved in 
Grauert-Remmert \cite[II, \S 1, Satz 2]{REFgr} 
and hence we have 
$I=I^{\bot_n\bot_n}$. 
Then $S_{n,\BS{b}}$ induces a non-degenerate sesquilinear 
form $S_A$ on 
$I^{\bot_n}\times A\cong I^{\bot_n}\times
(\CAL{O}_{n,\BS{b}}/I^{\bot_n\bot_n})$ 
by Bourbaki \cite[4, \S 1, n${}^{\circ} 5$, Proposition 5]{REFEVT}. 
This pairing defines a weak topology on $A$. 
It coincides with the factor topology 
of $\CAL{O}_{n,\BS{b}}$ by 
\cite[4, \S 1, n${}^{\circ} 5$, Prop 7]{REFEVT}. 
The weak topology on $I^{\bot_n}$ 
coincides with the relative one by 
\cite[4, \S 1, n${}^{\circ} 5$, Prop 6]{REFEVT}. 
The projective topology on $A$ is proved to coincide with the 
factor topology of $\CAL{O}_{n,\BS{b}}$ by 
\cite[II, \S 1, Satz 3]{REFgr}. The assertions (1), (4) and (5) imply 
that two topologies 
on $A$ coincides. Continuity and openness are 
obvious properties 
of a factor morphism of topological groups. 
\end{proof}
%%%%%%%%%%%%%%%%%%%%%%%
\begin{cor}\label{CORhomomorphism}
Let $\varphi:B\longrightarrow A$ be a homomorphism 
of analytic algebras. Then $\varphi$ is weakly continuous. 
\end{cor}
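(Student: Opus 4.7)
The plan is to reduce the statement to continuity with respect to the projective topology, where the argument becomes essentially formal. By Lemma \ref{LEMstrict}, the weak topology on each of the analytic algebras $B$ and $A$ coincides with its projective topology, i.e. the coarsest topology for which every factor map $\pi_k^A : A \to A/\FRAK{m}_A^k$ (and similarly $\pi_k^B$) is continuous. So it is enough to show that $\phi$ is continuous for the projective topologies on $B$ and $A$.

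Next I would invoke the fact that a $\BB{C}$-algebra homomorphism $\phi : B \to A$ between analytic algebras is automatically local. This is because the natural map $\BB{C} \to B/\FRAK{m}_B$ and $\BB{C} \to A/\FRAK{m}_A$ are isomorphisms: an element $b \in \FRAK{m}_B$ has residue class $0$ in $\BB{C}$, hence $\phi(b)$ has residue class $0$ in $A/\FRAK{m}_A$ and must lie in $\FRAK{m}_A$. Multiplicativity then gives $\phi(\FRAK{m}_B^k) \subset \FRAK{m}_A^k$ for every $k \in \BB{N}$, so $\phi$ descends to a well-defined $\BB{C}$-linear map
$$
\phi_k : B/\FRAK{m}_B^k \longrightarrow A/\FRAK{m}_A^k \qquad (k \in \BB{N}).
$$

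Finally, each quotient $B/\FRAK{m}_B^k$ and $A/\FRAK{m}_A^k$ is a finite-dimensional $\BB{C}$-vector space, and so carries a unique Hausdorff topological vector space structure in which every $\BB{C}$-linear map is automatically continuous. In particular the composite $\pi_k^A \COMP \phi = \phi_k \COMP \pi_k^B$ is continuous for every $k$, and by the universal property of the projective topology on $A$ this is exactly the condition needed to conclude that $\phi$ itself is continuous.

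There is no real obstacle here: the content sits entirely in the identification of the weak and projective topologies carried out in Lemma \ref{LEMstrict}, together with the automatic locality of $\BB{C}$-algebra homomorphisms between analytic algebras with residue field $\BB{C}$. The only mild subtlety worth flagging in the write-up is that one must explicitly verify that the homomorphism is local before one can form the induced maps $\phi_k$; after that, continuity follows by the universal property of the inverse-limit topology.
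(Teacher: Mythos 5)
Your proof is correct but takes a genuinely different route from the paper's. The paper lifts $\phi$ to a morphism $\tilde\phi$ between the \emph{regular} analytic algebras $\CAL{O}_{n,\BS{b}}\to\CAL{O}_{m,\BS{a}}$ via Grauert--Remmert Satz II.0.3, observes that such a lift is a substitution map and hence weakly continuous, and then descends through the factor epimorphisms using the openness from Lemma~\ref{LEMstrict}. You instead stay entirely inside the quotient algebras: once $\phi$ is known to be local, it descends to the finite-dimensional quotients $B/\FRAK{m}_B^k\to A/\FRAK{m}_A^k$, each automatically continuous, and the universal property of the projective topology (again via Lemma~\ref{LEMstrict} for the identification weak~$=$~projective) finishes the job. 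Your route avoids the lifting theorem and the analysis of substitution maps, which is a genuine simplification. One small flaw worth fixing: your justification of locality is circular as written, since the claim that ``$\phi(b)$ has residue class $0$ in $A/\FRAK{m}_A$'' already presupposes that $\phi$ descends to residue fields, which is precisely the inclusion $\phi(\FRAK{m}_B)\subset\FRAK{m}_A$ you are trying to establish. The standard repair is via units: if some $m\in\FRAK{m}_B$ had $\phi(m)\notin\FRAK{m}_A$, write $\phi(m)=c+m'$ with $c\in\BB{C}^*$ and $m'\in\FRAK{m}_A$; then $m-c$ is a unit in $B$ (as $B$ is local and $c\neq 0$), so $\phi(m-c)=m'$ would have to be a unit in $A$, contradicting $m'\in\FRAK{m}_A$. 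With that one step repaired, your argument is complete and arguably cleaner than the paper's.
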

%%%%%%%%%%%%%%%%%%%%%%%%%

\begin{proof}
Suppose that 
$A=\CAL{O}_{m,\BS{b}}/I,\ B=\CAL{O}_{n,\BS{b}}/J$ and let 
$\pi_A:\CAL{O}_{m,\BS{a}}\longrightarrow A,\ 
\pi_B:\CAL{O}_{n,\BS{b}}\longrightarrow B$ 
denote the factor epimorphisms. Then $\varphi$ lifts to 
$\tilde{\varphi}:\CAL{O}_{n,\BS{b}}\longrightarrow
\CAL{O}_{m,\BS{a}}$ 
(see e.g. Grauert-Remmert \cite[II, \S 0, Satz 3)]{REFgr}. 
Since $\tilde{\varphi}$ 
is obtained by substituting for $y_i$ 
elements of the maximal ideal 
of $\CAL{O}_{n,\BS{b}}$, it is easy to see that 
$\tilde{\varphi}$ is weakly continuous. 
Then the inverse image $(\pi_A\COMP\tilde{\varphi})^{-1}(U)$ 
of an open 
subset $U\subset A$ is open by Lemma \ref{LEMstrict}. 
This implies that 
$\varphi^{-1}(U)$ is open again by Lemma \ref{LEMstrict} 
and that $\varphi$ is weakly continuous. 
\end{proof}
%%%%%%%%%%%%%%%%%%%%%%%%%%%%%%%%%%%%%%%%
\section{Projector to a vector subspace}%
\label{section-Bruno}
%%%%%%%%%%%%%%%%%%%%%%%%%%%%%%%
Bos and Calvi 
used the least space of de~Boor and Ron to define 
Taylor projector. 
Their construction can be abstracted as follows. 
Let $Z_{\BS{b}}$ be a finite-dimensional vector subspace of 
the local analytic algebra $\CAL{O}_{n,\BS{b}}$. 
Then there exists 
a retract (projector) 
$\TAY_{Z,\BS{b}}:\,\CAL{O}_{n,\BS{b}}
\longrightarrow Z_{\BS{b}}$ 
of vector spaces. By the results of former sections, 
at a general point $\BS{b}$, the kernel of 
$\TAY_{Z,\BS{b}}$ is an ideal and the space $Z_{\BS{b}}$ has 
a structure of Artinian algebra as a quotient of 
$\CAL{O}_{n,\BS{b}}$. 
\\

%%%%%%%%%%%%%%%%%%%%%%%%%%%%%%%%%
Let $\varphi:\,\CAL{O}_{m,\BS{a}}\longrightarrow
\CAL{O}_{n,\BS{b}}$ be a 
homomorphism of analytic algebras. There exists its adjoint 
$$
{}^s\varphi:\,\BB{C}[\BS{\tau}]=\CAL{O}_{n,\BS{b}}\DA
\longrightarrow \BB{C}[\BS{\xi}]=\CAL{O}_{m,\BS{a}}\DA
$$
by Lemma \ref{LEMtrans-adj}. 
This is nothing but the push-forward 
of derivatives of Dirac delta (a distribution with one point 
support $\{\BS{a}\}$, see Schwartz 
\cite[(III, 10;1), (IX, 4;1)]{REFschwarz}). 
Its concrete forms 
are given by multivariate versions of Fa\`{a} di Bruno formula 
(the formula for the higher order derivatives of composite 
multivariate functions, see e.g. \cite{REFlp}, \cite{{REFma}}). 
Note that the image of a homogeneous element by 
${}^s\varphi$ is 
not always homogeneous. 
It is a very troublesome task to calculate 
this formula by hand. 
% \end{rem}
%%%%%%%%%%%%%%%%%%%%
\begin{defin}\label{DEFprojector}
For a finite-dimensional vector subspace 
$Z_{\BS{b}}\subset\CAL{O}_{n,\BS{b}}$, we have defined 
a non-degenerate sesquilinear form 
$$
S_{Z,\BS{b}}:\,Z_{\BS{b}}\DA\times 
Z_{\BS{b}}\longrightarrow\BB{C}
$$
induced from $S_{n,\BS{b}}$ using some fixed coordinates 
$\BS{t}=(t_1,\dots,t_n)$ (Theorem \ref{THMminimal}). 
The topology of $Z_{\BS{b}}$ (resp. $Z_{\BS{b}}\DA$) 
as a topological vector space is unique 
by Bourbaki \cite[1, \S 2, n${}^{\circ}3$ Theorem 2]{REFEVT}. 
Let $\iota:\,Z_{\BS{b}}\DA\longrightarrow 
\CAL{O}_{n,\BS{b}}\DA$ and 
$\kappa:\, Z_{\BS{b}}\longrightarrow \CAL{O}_{n,\BS{b}}$ 
denote the inclusion mappings. 
Since all linear mappings defined on a 
finite-dimensional space are continuous by 
\cite[1, \S 2, n${}^{\circ} 3$, Corollary 2]{REFEVT}, 
the mappings $\iota$ and $\kappa$ are continuous. 
Then we have the weakly continuous adjoint linear mappings 
$\TAY_{Z,\BS{b}}:={}^s\iota:\,\CAL{O}_{n,\BS{b}}
\longrightarrow Z_{\BS{b}}$ of $\iota$ and 
${}^s\kappa:\,\CAL{O}_{n,\BS{b}}\longrightarrow Z_{\BS{b}}\DA$ 
of $\kappa$ by Lemma \ref{LEMtrans-adj}. 
Thus we have Diagram \ref{tab:2}, 
where the bold vertical lines imply the sesquilinear pairings. 
\end{defin}
%%%%%%%%%%%%%%%%%%%%%%%
\renewcommand{\tablename}{\textsf{diagram}}
\begin{table}[h]
\caption{\scriptsize\textsf{Projector}}\label{tab:2}
{\unitlength=.3em
\hspace*{8ex}
\begin{picture}(40,21)
\thicklines
%%ab^2
\put(0,16){$Z_{\BS{b}}\ \ \hookrightarrow\ \ 
\CAL{O}_{n,\BS{b}}=\BB{C}\{\BS{t}-\BS{b}\}$}
\path(2,5)(2,14)
  \path(2.1,5)(2.1,14)
\path(14,5)(14,14)
  \path(14.1,5)(14.1,14)
\put(0,2){$Z_{\BS{b}}\DA\ \ \hookrightarrow\ \ 
\CAL{O}_{n,\BS{b}}\DA\ =\BB{C}[\BS{\tau}]$}
\put(7.5,3.8){\small$\iota$}
\put(6.5,.5){$\dashleftarrow$}
\put(7,-1.8){\small${}^s\kappa$}
\put(7.2,18.3){\small$\kappa$}
\put(6.4,11.2){\small$\TAY_{Z,\BS{b}}$}
\put(6,14){$\dashleftarrow$}
\put(-4.1,9){\small$S_{Z,\BS{b}}$}
\put(15.2,9){\small$S_{n,\BS{b}}$}
\end{picture}
}\end{table}
%%%%%%%%%%%%%%%%%%%%%%%%%%%
\begin{prop}\label{PROprojector}
Let $Z_{\BS{b}}\subset\CAL{O}_{n,\BS{b}}$ be a 
finite-dimensional 
vector subspace. Then we have the following. 
\begin{enumerate}
\item
The mappings 
$\TAY_{Z,\BS{b}}$ and ${}^s\kappa$ are weakly continuous.
\item
We have the equalities 
\begin{gather*}
\Ker \TAY_{Z,\BS{b}}=(Z_{\BS{b}}\DA)^{\bot_n},\quad 
(\Ker \TAY_{Z,\BS{b}})^{\bot_n}=Z_{\BS{b}}\DA,
\\ 
\Ker {}^s\kappa=(Z_{\BS{b}})^{\bot_n},\quad 
(\Ker {}^s\kappa)^{\bot_n}=Z_{\BS{b}},\quad 
\end{gather*}
where $\bot_n$ 
denotes the subspace of annihilators with respect to 
$S_{n,\BS{b}}$.
\item 
The mappings $\TAY_{\BS{a},d}$ and ${}^s\kappa$ are 
retractions of 
vector spaces, 
i.e. $\TAY_{Z,\BS{b}}\COMP\kappa$ and 
${}^s\kappa\COMP\iota$ 
are the identities. 
\end{enumerate}
\end{prop}
%\vspace*{1ex}
%%%%%%%%%%%%%%%%%%%%%%%%%%

\begin{proof}
The property (1) is already stated above. 
The first equality of 
(2) follows from the fact that the sesquilinear form 
$S_{n,\BS{b}}$ is 
non-degenerate. Since $Z_{\BS{b}}\DA$ is finite-dimensional, 
it is weakly closed and 
$Z_{\BS{b}}\DA=(Z_{\BS{b}}\DA)^{\bot_n\bot_n}$. 
Then the first equality implies the second. 
The rest of (2) are quite similar. 
If $f\in Z_{\BS{b}}$ and $p\in Z_{\BS{b}}\DA$, we have
$$
S_{Z,\BS{b}}\SLF{p}{f}
=
S_{n,\BS{b}}\SLF{\iota(p)}{\kappa(f)}
=
S_{Z,\BS{b}}\SLF{p}{\TAY_{Z,\BS{b}}\COMP\kappa(f)}
.$$ 
Since $S_{Z,\BS{b}}$ is non-degenerate, this implies that 
$\TAY_{Z,\BS{b}}\COMP\kappa$ is the identity. 
Similarly ${}^s\kappa\COMP\iota$ is also the identity. 
\end{proof}

%%%%%%%%%%%%%%%%%%
\begin{rem}\label{RMKinterpolation}
A retraction of a space $F$ of smooth functions has 
a close relation with interpolation. 
Let $Z$ be a finite-dimensional vector subspace of $F$. 
An interpolation is the task to find a function $f\in Z$ 
which, together with its higher order derivatives, 
takes the prescribed values at a finite number of 
points $\BS{b}_i$. 
These regarded quantities, the values of the function itself 
or its higher order derivatives at $\BS{b}_i$, 
are expressed by Schwartz 
distributions $p_i$ $(1\le i\le q)$ supported at $\BS{b}_i$ 
as we have seen in \S \ref{section-weak} 
and can be expressed as elements of $\BB{C}[\BS{\tau}]$. 
For sake of simplicity, 
consider the case the support consists of a single 
point $\BS{b}$. 
If we apply the distributions $p_i$ to 
$f\in \CAL{O}_{n,\BS{b}}$, 
we get ``interpolation data" 
$S_{n,\BS{b}}\SLF{p_i}{f}\in\BB{C}$ 
$(1\le i\le q)$. Take a finite-dimensional space 
$Z_{\BS{b}}\subset\CAL{O}_{n,\BS{b}}$. Since 
$$
S_{n,\BS{b}}\SLF{p_i}{f}
=S_{Z,\BS{b}}\SLF{p_i}{\TAY_{Z,\BS{b}}(f)}
\quad
(f\in \CAL{O}_{n,\BS{b}})
,$$
$f$ and $\TAY_{Z,\BS{b}}(f)$ have the same data 
for the interpolation quantities $p_i\in Z_{\BS{b}}\DA$. 
Thus the data of $f$ is interpolated by an element 
$\TAY_{Z,\BS{b}}(f)\in Z_{\BS{b}}$. 
\end{rem}
%%%%%%%%%%%%%%%%%%%%%
\begin{lem}\label{LEMdclosed-ideal}
Let $Z_{\BS{b}}\subset\CAL{O}_{n,\BS{b}}$ be a 
finite-dimensional 
vector subspace. Then the following conditions 
(1) and (2) are equivalent. 
\begin{enumerate}
\item 
The least space $Z_{\BS{b}}\DA$ is $D$-invariant. 
\item
The kernel $\Ker\TAY_{Z,\BS{b}}=(Z_{\BS{b}}\DA)^{\bot_n}$ 
of $\TAY_{Z,\BS{b}}$ is an ideal. 
\end{enumerate}
\end{lem}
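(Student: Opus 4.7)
The plan is to translate the ideal condition on $K:=\Ker\TAY_{Z,\BS{b}}=(Z_{\BS{b}}\DA)^{\bot_n}$ (the identification coming from Proposition \ref{PROprojector}(2)) into the $D$-invariance of $Z_{\BS{b}}\DA$ using the adjoint formalism of Section \ref{section-weak}. The bridge between the two sides is that multiplication by $t_i-b_i$ on $\CAL{O}_{n,\BS{b}}$ is the adjoint of $\partial/\partial\tau_i$ on $\BB{C}[\BS{\tau}]$ with respect to $S_{n,\BS{b}}$; this is the shifted-coordinate version of Lemma \ref{LEMadjoint}.

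First I would reduce ``$K$ is an ideal'' to ``$(t_i-b_i)K\subset K$ for each $i=1,\dots,n$''. Since $\FRAK{m}_{n,\BS{b}}$ is generated by $t_1-b_1,\dots,t_n-b_n$ and $K$ is already a $\BB{C}$-subspace, stability under those generators gives stability under all polynomials in $\BS{t}-\BS{b}$ by induction; the extension to multiplication by arbitrary convergent power series uses the weak closedness of $K$ (Corollary \ref{CORorthogonal}(1)) together with weak continuity of the multiplication operators on $\CAL{O}_{n,\BS{b}}$.

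Next, via the adjoint relation $S_{n,\BS{b}}\SLF{p}{(t_i-b_i)f}=S_{n,\BS{b}}\SLF{\partial p/\partial\tau_i}{f}$, the containment $(t_i-b_i)K\subset K$ becomes, on taking orthogonals, equivalent to $\partial/\partial\tau_i(K^{\bot})\subset K^{\bot}$. A small but crucial step is the identification $K^{\bot}=Z_{\BS{b}}\DA$: in the finite-dimensional case this is immediate from $K^{\bot\bot}=K$, but since the lemma allows $\dim_{\BB{C}}Z=\infty$, I would verify weak closedness of $Z_{\BS{b}}\DA$ directly by exploiting its graded structure $Z_{\BS{b}}\DA=\bigoplus_d(Z_{\BS{b}}\DA)_d$, where each $(Z_{\BS{b}}\DA)_d$ sits inside the finite-dimensional space of homogeneous polynomials of degree $d$ in $\BB{C}[\BS{\tau}]$ and can be separated from its complement by a single homogeneous annihilating functional. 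Combining these steps yields the equivalence of ``$K$ is an ideal'' with ``$Z_{\BS{b}}\DA$ is $D$-invariant''.

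Finally I would verify properness. Assuming (1), Proposition \ref{PROnonvanishing} supplies $f\in Z$ with $f(\BS{b})\neq 0$, whence $Z_{\BS{b}}\DA$ contains a nonzero constant, $1\notin K$, and $K$ is proper. Conversely, $K=\CAL{O}_{n,\BS{b}}$ would force $Z_{\BS{b}}\DA=0$, contradicting $\dim_{\BB{C}}Z>0$ via Theorem \ref{THMgraded-space}. The main obstacle I expect is the clean handling of the weak-closure identification $K^{\bot}=Z_{\BS{b}}\DA$ in the potentially infinite-dimensional case; it is precisely this identification that allows the adjoint argument to yield $D$-invariance of $Z_{\BS{b}}\DA$ itself rather than merely of its weak closure.
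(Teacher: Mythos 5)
Your proposal follows essentially the same route as the paper: the equivalence is driven by the adjoint relation of Lemma \ref{LEMadjoint} (multiplication by $t_i$ is adjoint to $\partial/\partial\tau_i$), the passage from generators $t_i-b_i$ to arbitrary power series is handled via weak closedness of $K$, and the crucial identification $K^{\bot}=Z_{\BS{b}}\DA$ (needed for the direction $(2)\Rightarrow(1)$) is obtained exactly as in the paper by observing that $Z_{\BS{b}}\DA$ is graded with finite-dimensional homogeneous pieces, hence weakly closed, so that $(Z_{\BS{b}}\DA)^{\bot\bot}=Z_{\BS{b}}\DA$. The only noticeable divergence is in the properness check: you invoke Proposition \ref{PROnonvanishing} and Theorem \ref{THMgraded-space}, while the paper appeals to the retraction property of Proposition \ref{PROprojector}; both are sound for $\dim Z<\infty$, and both in fact extend to the infinite-dimensional case allowed by the lemma once one notes that the argument in Proposition \ref{PROnonvanishing} does not actually use finiteness (any nonzero $f\in Z$ with $f(\BS{b})=0$ has least part of positive degree, and $D$-invariance then forces a constant into $Z_{\BS{b}}\DA$). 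It would be worth flagging that small hypothesis mismatch rather than citing the proposition verbatim, but the mathematics is correct and the structure of the argument matches the paper's.
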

%%%%%%%%%%%%%%%%%%%%

The condition (2) is the property of an 
ideal projector of Birkhoff \cite{REFbirkhoff}. 
This property is important because it assures that 
$\TAY_{Z,\BS{b}}$ induces a factor epimorphism of rings. 
Equivalence of (1) and (2) appears in 
M.~G.~Marinari, H.~M.~M\"{o}ller, T.~Mora \cite{REFmmm}, 
% attribution: confirmed by Gasca-Marinari
Proposition 2.4 and de~Boor-Ron \cite[Proposition 6.1]{REFbr2}. 
The interpolation defined by a projector with this property is 
sometimes called Hermite interpolation but there is other 
proposal to use this word to imply the interpolation with 
more good property (de~Boor-Shekhtman \cite{REFbs}). 
%%%%
\begin{proof}
Suppose that $Z_{\BS{b}}\DA$ is $D$-invariant. 
If $p\in Z_{\BS{b}}\DA$ and $f\in (Z_{\BS{b}}\DA)^{\bot_n}$, 
we have
$$
S_{n,\BS{b}}\SLF{p}{t_if }
=
S_{n,\BS{b}}\SLF{\partial p/\partial \tau_i}{f }=0
\quad(i=1,\dots,n)
$$
by {Lemma \ref{LEMadjoint}}. This implies that 
$t_i(Z_{\BS{b}}\DA)^{\bot_n}\subset(Z_{\BS{b}}\DA)^{\bot_n}$. 
Then for any $g(\BS{t})\in\BB{C}[\BS{t}]$, we have 
$g(\BS{t})(Z_{\BS{b}}\DA)^{\bot_n}
\subset(Z_{\BS{b}}\DA)^{\bot_n}$. 
Taking the weak limit, this holds for $g\in\CAL{O}_{n,\BS{b}}$, 
which implies that $(Z_{\BS{b}}\DA)^{\bot_n}$ is an ideal of 
$\CAL{O}_{n,\BS{b}}$ and completes the proof of 
$(1)\Longrightarrow(2)$. 
In a similar way, we see that, if $(Z_{\BS{b}}\DA)^{\bot_n}$ 
is an ideal of $\CAL{O}_{m,\BS{a}}$, 
then $(Z_{\BS{b}}\DA)^{\bot_n\bot_n}$ is $D$-invariant. 
Since every homogeneous part of $Z_{\BS{b}}\DA$ is 
finite-dimensional and belongs to $Z_{\BS{b}}\DA$, 
we see that $Z_{\BS{b}}\DA$ is weakly 
closed. 
Then we have $(Z_{\BS{b}}\DA)^{\bot_n\bot_n}=Z_{\BS{b}}\DA$ 
and $(2)\Longrightarrow(1)$ follows. 
\end{proof}
%%%%%%%%%%%%%%%%%%%%%
\begin{rem}\label{REMartinian-1}
If the condition (2) holds and if $Z_{\BS{b}}\neq\emptyset$, 
the vector subspace $Z_{\BS{b}}$ has 
a structure of a $\BB{C}$-algebra such that the linear mapping 
$\TAY_{Z,\BS{b}}:\,
\CAL{O}_{n,\BS{b}}\longrightarrow Z_{\BS{b}}$ 
is a factor epimorphism of a $\BB{C}$-algebra 
(\cite[Corollary 3.6]{REFBC-2}). 
Since $\dim_{\BB{C}}Z_{\BS{b}}<\infty$, 
it is a local analytic algebra of Krull dimension 0. 
Then it is \emph{Artinian} in the sense that it satisfies 
the descending chain 
condition of ideals (cf. \cite{REFmatsumura}). 
Thus we have associated to each point of $U_Z^{\,\rm{inv}}$ 
an Artinian local algebra. 
Since all the elements of the maximal ideal are nilpotent, 
$Z_{\BS{b}}$ is not a subalgebra of $\CAL{O}_{n,\BS{a}}$ 
in general. 
\end{rem}
%%%%%%%%%%%%%%%%%%%%%%%%%%%%%%%%%%%%%%%%
\begin{exa}\label{EXA1}
We show a common example. Let us take the vector space 
$$
Z:=\Span_{\BB{C}}
(1,\,s,\,t,\,t^2+st^2,\,t^3)\subset\CAL{O}_2(\BB{R}^2)
.$$
%%%%%%%%%%%%%%%%%%%%%%%
\renewcommand{\tablename}{\textsf{diagram}}
\begin{table}[h]
\caption{\scriptsize\textsf{Bases of jet spaces of 
$Z$ at $\BS{t}=(a,b)$}}\label{tab:3}
\vskip-3.7ex
$$
\begin{array}{|c||c|c|c|c|c|c|c|}
\hline
\multirow{4}{8ex}{\text{jet space}}
&R_0      & \multicolumn{6}{|c|}{}\\
\cline{2-8}
&\multicolumn{3}{|c|}{R_1}&\multicolumn{4}{c|}{}\\
\cline{2-8}
&\multicolumn{5}{|c|}{R_2}&\multicolumn{2}{c|}{}\\
\cline{2-8}
&\multicolumn{7}{|c|}{R_3}\\
\hline
\text{fibre coordinates}
& u_{(0,0)} & u_{(1,0)} & u_{(0,1)} & u_{(1,1)} 
& u_{(0,2)} & u_{(1,2)} & u_{(0,3)} \\
\hline\hline
1       &1        & 0      & 0        & 0    & 0      & 0 & 0  \\
s       &a        & 1      & 0        & 0    & 0      & 0 & 0 \\
t       &b        & 0      & 1        & 0    & 0      & 0 & 0 \\
t^2+st^2&b^2+ab^2 & b^2    & 2(1+a)b  & 2b   & 1+a    & 1 & 0  \\
t^3     &b^3      & 0      & 3b^2     & 0    & 3b     & 0 & 1  \\
\hline
\end{array}
$$
\end{table}
\vskip-2ex
%%%%%%%%%%%%%%%%%%%%%%%%%%%
\noindent
Paying attention to the normalizing factor $1/\BS{\nu}!$ 
of fibre coordinates in \S \ref{section-wronskian}, 
we have Diagram 3 for the jets of basis of $Z$. For example, 
the bottom of the diagram implies 
$j^3(t^3)(\BS{b})
=(\BS{b};b^3,\,0,\,3b^2,\,0,\,0,\,3b,\,0,\,0,\,0,\,1)$ 
at $\BS{b}=(a,b)$. Then we have 
$$
\dim_{\BB{C}}R_k(\BS{b})
=\left\{
\begin{array}{ll}
=   1\quad         & \hfill\qquad(k=0)\phantom{.}\\
=   3              & \hfill\qquad(k=1)\phantom{.}\\
=   3\quad         & (a=-1,\ b=0,\ k=2)\\
=   4\quad         & (a\neq -1,\ b=0,\ k=2)\\
=   5\quad         & \hspace{3.5em}(b\neq 0,\ k=2)\\
=   5\quad         & \hspace{6.4em}(k\ge 3).
\end{array}
\right.
$$
Then, denoting $s'=s-a,\ t'=t-b$, we have the following.
\begin{enumerate}
\item
If $b\neq 0$,
$$
Z_{(a,\,b)}\DA=\Span(1,\sigma,\tau,\sigma\tau,\tau^2),
\quad
(Z_{(a,\,b)}\DA)^{\bot_2}
=\left(s'^2,s't'^2,t'^3\right)\BB{C}\{s',t'\}
.$$
The set of these points form $U^{\,\rm{bdl}}$ of $Z$ and, 
since the kernel $(Z_{(a,\,b)}\DA)^{\bot_2}$ are of 
the same form, 
the associated Artinian algebras are all isomorphic 
at these points. 
\item
If $a\neq -1,\ b=0$,
$$
Z_{(a,\,b)}\DA=\Span\left(1,\sigma,\tau,\tau^2,\tau^3\right),
\quad
(Z_{(a,\,b)}\DA)^{\bot_2}
=\left(s'^2,\,s't',\,t'^4\right)\BB{C}\{s',\,t'\}. 
$$
These points form 
$U^{\,\rm{inv}}\setminus U^{\,\rm{bdl}}$ of $Z$. 
The associated Artinian algebras are all isomorphic at these 
points but not isomorphic to those associated to 
bundle points. 
\item
If $a=-1,\ b=0$, 
\begin{gather*}
Z_{(a,\,b)}\DA
=\Span\left(1,\sigma,\tau,\sigma\tau^2,\tau^3\right),
\\
(Z_{(a,\,b)}\DA)^{\bot_2}
=\left(s'^3,\,s't'^3,\,t'^4\right)\BB{C}\{s',\,t'\}
+\BB{C}s't'+\BB{C}t'^2.
\end{gather*}
The least space of $Z$ is not $D$-invariant here. 
\end{enumerate}
We will see this example again in Example 
\ref{EXAdinv-but-notbdle}. 
\end{exa}
%%%%%%%%%%%%%%%%%%%%%%%%%%%
\section{Intrinsic treatment of least spaces}%
\label{section-intrinsic}
%%%%%%%%%%%%%%%%%%%%%%%%%%%%%%%
Here we show an intrinsic treatment of the least space, 
$D$-invariance property and the Artinian algebra 
associated to a $D$-invariant point. 
These are defined for complex manifolds independently 
of local coordinates 
$\BS{t}=(t_1,\dots,t_n)$. To explain them we distinguish 
the coordinate expression 
$f_{\BS{0}}^{\BS{t}}\DA:=(f\COMP\BS{\Phi})_{\BS{0}}\DA$ 
of the least part 
(resp. the least space $Z_{\BS{b}}^{\BS{t}}\DA$) 
and the intrinsic one $f_{\BS{b}}\DA$ (resp. $Z_{\BS{b}}$) 
of $f\in\CAL{O}_{M,\BS{b}}$ in this section.
\\ 

Let $M$ be an $n$-dimensional complex manifold and $\varphi$ 
a local parametrisation of $M$ centred at $\BS{b}$: 
$\varphi(\BS{0})=\BS{b}$. 
Let $\BS{s}$ be the local coordinates for $\varphi$, then 
$f_{\BS{0}}^{\BS{s}}:=(f\COMP\BS{\Phi})_{\BS{0}}\DA$ is expressed as 
a homogeneous polynomial, say of degree $k$, 
in $\BS{\sigma}:=\BS{s}_{\BS{0}}\DA$. 
If $\BS{\Psi}$ is another local parametrisation of $M$ 
centered at $\BS{b}$ 
with local coordinates $\BS{t}$ and dual coordinates 
$\BS{\tau}:=\BS{t}_{\BS{0}}\DA$. 
Then there exists a biholomorphic 
germ $\BS{\Theta}$ with $\BS{\Phi}=\BS{\Psi}\COMP\BS{\Theta}$. 
Let 
$$
\tilde{j}_{\theta}:\,
\BB{C}[\BS{\tau}]\longrightarrow\BB{C}[\BS{\sigma}],
\quad
p(\BS{\tau})\longmapsto p(J_{\theta}\BS{\sigma})
$$
denotes the isomorphism induced by the linear coordinate 
transformation expressed by the Jacobian matrix 
$J_{\theta}:={\partial(\BS{t})}/{\partial(\BS{s})}$ 
evaluated at $\BS{s}=\BS{0}$, where $\BS{\sigma}$ 
and $\BS{\tau}$ are treated as column vectors. 
By the definition of the least part, we have 
\begin{gather*}
\tilde{j}_{\theta} (f_{\BS{0}}^{\BS{t}}\DA)
=
f_{\BS{0}}^{\BS{t}}\DA\hspace{-.5ex}(J_{\theta}\BS{\sigma})
=
(f\COMP\BS{\Psi})_{\BS{0}}\DA\hspace{-.5ex}
(J_{\theta}\BS{\sigma})
=
((f\COMP\BS{\Psi})(J_{\theta}\BS{s}))_{\BS{0}}\DA
%=
%(f\COMP\BS{\Psi})_{\BS{0}}\DA(\Theta\COMP\BS{s})
\\
=
(f\COMP\BS{\Psi}\COMP\Theta\COMP\BS{s})_{\BS{0}}\DA
=
(f\COMP\BS{\Phi})_{\BS{0}}\DA
=
f_{\BS{0}}^{\BS{s}}\DA
.\end{gather*}
Then the collection 
$$
f_{\BS{b}}\DA
:=
\left\{\bigl(\BS{\Phi},f_{\BS{0}}^{\BS{s}}\DA\bigl):\,
\BS{\Phi}=\BS{\Phi}(\BS{s})
\text{ is a local parametrisation of }M
\text{ at }\BS{b}\right\}
,$$
whose elements are related by $\tilde{j}_{\theta}$ as above, 
can be seen as an evaluation 
at $\BS{b}$ of the $k$-fold symmetric tensor product, 
over the sheaf $\CAL{O}_M$ of holomorphic functions on $M$, 
of the usual cotangent sheaf $\CAL{T}^*$ of $\BB{C}^n$. 
The least part $f_{\BS{0}}^{\BS{s}}\DA\in\BB{C}[\BS{\sigma}]$ 
is a coordinate expression of $f_{\BS{b}}\DA$. 
Hence, we have the intrinsic form of the least space: 
$$
\CAL{O}_{M,\BS{b}}\DA
:=\left\{f_{\BS{b}}\DA:\,f\in\CAL{O}_{M,\BS{b}}\right\}
\cong\bigoplus_k\CAL{T}^*({\BS{b}})^{\odot k}
,$$ 
where the symmetric tensor product $\odot$ is taken 
over $\BB{C}$ (cf. Quillen \cite[p.2]{REFquillen}). 
%%%%%%%%%%%%%%%%%%%%%%%
\begin{defin}\label{DEFintrinsic}
Let $M$ be an $n$-dimensional complex manifold with a local 
parametrisation $\BS{\Phi}=\BS{\Phi}(\BS{s})$ at $\BS{b}$ and 
$Z\subset\CAL{O}_{M,\BS{b}}$ a finite-dimensional vector 
subspace. We denote the set of bundle points of all 
the jet spaces of 
$$
Z_{\BS{b}}^{\BS{s}}
:=
\{(f\COMP\BS{\Phi})_{\BS{0}}:\,f\in Z\}
\subset
\BB{C}\{\BS{s}\}
$$ 
and the set of $D$-invariant points of 
$$
Z_{\BS{b}}^{\BS{s}}\DA
:=
\Span_{\BB{C}}\left(f_{\BS{0}}^{\BS{s}}\DA:\,f\in Z\right)
\subset
\BB{C}[\BS{\sigma}]
$$ 
respectively by $M_Z^{\,\rm{bdl}}$ and $M_Z^{\,\rm{inv}}$. 
\end{defin}
%%%%%%%%%%%%%%%%%%%%%%%
\begin{thm}\label{THMintrinsic}
The sets 
$M_Z^{\,\rm{bdl}}$ and $M_Z^{\,\rm{inv}}$ are well-defined 
independently of the local parametrisation $\varphi$ and 
$M_Z^{\,\rm{bdl}}\subset M_Z^{\,\rm{inv}}$. 
\end{thm}
%%%%%%%%%%%%%%%%%%%%%%

\begin{proof}
Take two local parametrisations $\BS{\Phi}$ and $\BS{\Psi}$ and 
isomorphism $\tilde{j}_{\theta}$ as above. 
Since $\tilde{j}_{\theta}$ is a result of 
a linear change of variables, 
it does not alter the rank of $R_k$ in 
\S \ref{section-wronskian}. 
Then the set of bundle points $M_Z^{\,\rm{bdl}}$ 
of all jet spaces of $Z$ is well-defined independently 
of $\varphi$. 
The transformation $\tilde{j}_{\theta}$ changes a partial 
differentiation to a linear combination of 
partial differentiations. Hence the set $M_Z^{\,\rm{inv}}$ 
of $D$-invariant points for $Z$ is well-defined also. 
By Theorem \ref{THMd-closed}, we have 
$M_Z^{\,\rm{bdl}}\subset M_Z^{\,\rm{inv}}$. 
\end{proof}
The collection of $Z_{\BS{b}}^{\BS{s}}\DA$ for all $\varphi$ 
form the intrinsic least space 
$Z_{\BS{b}}\DA\subset\CAL{O}_{M,\BS{b}}\DA$ at $\BS{b}$. 
%%%%%%%%%%%%%%%%%%%%%%%
\begin{thm}\label{THMvector}
Let $M$ be an $n$-dimensional complex manifold and 
$Z\subset\CAL{O}(M)$ 
be a finite-dimensional vector subspace. 
If $\BS{b}\in M_Z^{\,\rm{inv}}$, 
the vector subspace $Z_{\BS{b}}$ 
has a structure 
$\CAL{O}_{n,\BS{b}}/(Z_{\BS{b}}\DA)^{\bot_n}$
of Artinian algebra, which is unique up to the canonical 
isomorphism (induced by ${}^s\tilde{j}_{\theta}$ in the proof) 
as a contravariant tensor. 
\end{thm}
%%%%%%%%%%%%%%%%%%%%%%%%%%

\begin{proof}
Existence of the structure of an Artinian algebra is 
explained in Remark 
\ref{REMartinian-1}. 
Take two local parametrisations around $\BS{b}\in M$ 
which induce isomorphisms 
$\varphi:\,\CAL{O}_{n,\BS{b}}\longrightarrow\BB{C}\{\BS{s}\}$ 
and 
$\psi:\,\CAL{O}_{n,\BS{b}}\longrightarrow\BB{C}\{\BS{t}\}$ 
such that $\varphi=\theta\COMP\psi$ for 
a third isomorphism $\theta$ 
of algebras. We have the homomorphism 
$\tilde{j}_{\theta}:\BB{C}[\BS{\tau}]
\longrightarrow\BB{C}[\BS{\sigma}]$ 
defined above and its adjoint 
${}^s\tilde{j}_{\theta}:\,
\BB{C}\{\BS{s}\}\longrightarrow\BB{C}\{\BS{t}\}$, 
which is nothing but the homomorphism corresponding 
to the coordinate 
transformation defined by the linear part of $\theta$. 
We have the following implications: 
\begin{gather*}
f\in (Z_{\BS{b}}^{\BS{t}}\DA)^{\bot_n}
\Longleftrightarrow
f\in(\tilde{j}_{\theta}(Z_{\BS{b}}^{\BS{s}}\DA))^{\bot_n}
\Longleftrightarrow
\forall p\in \tilde{j}_{\theta}(Z_{\BS{b}}^{\BS{s}}\DA):\,
S_{n,\BS{0}}\SLF{p}{f}=0
\\
\Longleftrightarrow
\forall q\in Z_{\BS{b}}^{\BS{s}}\DA:\,
S_{n,\BS{0}}\SLFF{\tilde{j}_{\theta}(q)}{f}=0
\Longleftrightarrow
\forall q\in Z_{\BS{b}}^{\BS{s}}\DA:\,
S_{n,\BS{0}}\SLFF{q}{{}^s\tilde{j}_{\theta}(f)}=0
\\
\Longleftrightarrow
{}^s\tilde{j}_{\theta}(f)\in(Z_{\BS{b}}^{\BS{s}}\DA)^{\bot_n}
\Longleftrightarrow
f\in({}^s\tilde{j}_{\theta})^{-1}
((Z_{\BS{b}}^{\BS{s}}\DA)^{\bot_n})
.\end{gather*}
This proves 
$$
(Z_{\BS{b}}^{\BS{t}}\DA)^{\bot_n}
=({}^s\tilde{j}_{\theta})^{-1}
((Z_{\BS{b}}^{\BS{s}}\DA)^{\bot_n})
$$ 
and $(Z_{\BS{b}}^{\BS{s}}\DA)^{\bot_n}$ is the image of 
$(Z_{\BS{b}}^{\BS{t}}\DA)^{\bot_n}$ by the isomorphism 
${}^s\tilde{j}_{\theta}$. 
This isomorphism is obtained by substituting $\BS{s}$ by 
${}^sJ_{\theta}\BS{t}$ and hence it is even 
an algebra isomorphism. 
Then $(Z_{\BS{b}}^{\BS{s}}\DA)^{\bot_n}$ is an ideal 
if and only if 
$(Z_{\BS{b}}^{\BS{t}}\DA)^{\bot_n}$ is so and hence 
$Z_{\BS{b}}^{\BS{s}}\DA$ 
is $D$-invariant if and only if $Z_{\BS{b}}^{\BS{t}}\DA$ is 
so by Lemma \ref{LEMdclosed-ideal}. 
When this is the case, we have an isomorphism 
$$
\BB{C}\{\BS{s}\}/(Z_{\BS{b}}^{\BS{s}}\DA)^{\bot_n}
\longrightarrow 
\BB{C}\{\BS{t}\}/(Z_{\BS{b}}^{\BS{t}}\DA)^{\bot_n}
$$ 
of algebras induced by ${}^s\tilde{j}_{\theta}$. 
\end{proof}
%%%%%%%%%%%%%%%%%%%%%
\begin{thm}\label{THMdmorphism}
Let $\varphi:\,\CAL{O}_{m,\BS{a}}:=\BB{C}\{\BS{x}\}
\longrightarrow\CAL{O}_{n,\BS{b}}:=\BB{C}\{\BS{t}\}$ be 
a homomorphism of analytic algebras and let 
${}^s\varphi:\,\BB{C}[\BS{\tau}]\longrightarrow
\BB{C}[\BS{\xi}]$ 
denote its adjoint mapping with respect to the sesquilinear 
forms $S_{m,\BS{a}}$ and $S_{n,\BS{b}}$. 
If $Q\subset\BB{C}[\BS{\tau}]$ 
is a finite-dimensional vector subspace, 
$\varphi$ induces a monomorphism $\psi$ of 
the factor vector spaces: 
$$
\psi:\,\CAL{O}_{m,\BS{a}}/({}^s\varphi(Q))^{\bot_m}
\longrightarrow
\CAL{O}_{n,\BS{b}}/Q^{\bot_n}
.$$
Furthermore we have the following.
\begin{enumerate}
\item
If $Q$ is $D$-invariant, so is ${}^s\varphi(Q)$ and $\psi$ is 
a monomorphism of Artinian algebras. 
\item
If $\varphi$ is an epimorphism, $\psi$ is an isomorphism 
and ${}^s\varphi(Q)$ is $D$-invariant if and only if $Q$ is so. 
If this is the case, then 
$\psi$ is an isomorphism of Artinian algebras. 
\end{enumerate}
\end{thm}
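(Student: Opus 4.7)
The whole theorem rests on one functorial identity. From the defining property of the adjoint,
\[
S_{m,\BS{a}}\SLF{{}^s\phi(p)}{f}=S_{n,\BS{b}}\SLF{p}{\phi(f)}\qquad(p\in Q,\ f\in\CAL{O}_{m,\BS{a}}),
\]
one reads off at once
\[
({}^s\phi(Q))^{\bot_m}=\phi^{-1}(Q^{\bot_n}).
\]
This single identity has two consequences which together yield the existence and the injectivity of $\psi$: the inclusion ``$\subseteq$'' says that $\phi$ sends $({}^s\phi(Q))^{\bot_m}$ into $Q^{\bot_n}$ (so $\phi$ descends to a well-defined linear map $\psi$ on the quotients), and the reverse inclusion ``$\supseteq$'' says that $\phi(f)\in Q^{\bot_n}$ forces $f\in({}^s\phi(Q))^{\bot_m}$ (so $\psi$ is a monomorphism).

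Assertion (1) follows immediately: if $\phi$ is surjective, every class in $\CAL{O}_{n,\BS{b}}/Q^{\bot_n}$ is represented by some $\phi(f)$, so $\psi$ is surjective as well. For (2) the strategy is to reduce ``$D$-invariance'' to ``proper ideal'' via Lemma \ref{LEMdclosed-ideal}. $D$-invariance of $Q$ makes $Q^{\bot_n}$ a proper ideal of $\CAL{O}_{n,\BS{b}}$; its preimage under the algebra homomorphism $\phi$, equal by the identity to $({}^s\phi(Q))^{\bot_m}$, is then an ideal of $\CAL{O}_{m,\BS{a}}$, and it is proper since $D$-invariance forces $1\in Q$ (apply enough $\partial/\partial\tau_i$'s to any nonzero element) and hence $\phi(1)=1\notin Q^{\bot_n}$. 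Running Lemma \ref{LEMdclosed-ideal} in the opposite direction on the finite-dimensional subspace $W:={}^s\phi(Q)\subset\BB{C}[\BS{\xi}]$ then gives its $D$-invariance. Remark \ref{REMartinian-1} (equivalently Theorem \ref{THMvector}) now endows both quotients with their canonical Artinian algebra structure as quotients of $\CAL{O}_{m,\BS{a}}$ and $\CAL{O}_{n,\BS{b}}$ by these ideals, and $\psi$ is manifestly an algebra homomorphism because it is the factorisation of the algebra homomorphism $\phi$. Assertion (3) is the conjunction of (1) and (2).

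The only technical point deserving any care is that Lemma \ref{LEMdclosed-ideal} is stated for the least space of a concrete vector space of holomorphic functions, whereas ${}^s\phi(Q)$ is just an abstract finite-dimensional subspace of $\BB{C}[\BS{\xi}]$. Its proof, however, uses only Lemma \ref{LEMadjoint} (adjointness of multiplication by $x_j$ and differentiation by $\xi_j$) together with the weak closure identity $W^{\bot_m\bot_m}=W$ for the subspace $W$; and since any finite-dimensional subspace of the Hausdorff (by Proposition \ref{PROnondegenerate}) weak topological vector space $\BB{C}[\BS{\xi}]$ is automatically weakly closed, the argument goes through verbatim. Apart from this piece of bookkeeping I expect no substantive obstacle.
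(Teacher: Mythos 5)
Your proof is correct and follows essentially the same route as the paper: establish the key identity $({}^s\phi(Q))^{\bot_m}=\phi^{-1}(Q^{\bot_n})$ directly from the defining adjoint property, read off both well-definedness and injectivity of $\psi$ from it, get surjectivity in (1) for free, and for (2) pass through Lemma~\ref{LEMdclosed-ideal} in both directions (``$D$-invariant $\Leftrightarrow$ annihilator is a proper ideal'') via the fact that preimages of proper ideals under unital ring homomorphisms are proper ideals. The one place you go beyond the paper is your observation that Lemma~\ref{LEMdclosed-ideal} is phrased for least spaces $Z_{\BS{b}}\DA$ while here it is applied to the abstract finite-dimensional subspaces $Q$ and ${}^s\phi(Q)$, and your remark that its proof only needs $W^{\bot\bot}=W$ (automatic for finite-dimensional subspaces since the weak topology is Hausdorff by Proposition~\ref{PROnondegenerate}) plus Lemma~\ref{LEMadjoint}; this is a genuine and worthwhile clarification of a point the paper cites without comment. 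Your argument for properness via $1\in Q$ (rather than the retraction property of $\TAY$ used in Lemma~\ref{LEMdclosed-ideal}) is a slightly more elementary variant but leads to the same place; everything checks out.
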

%%%%%%%%%%%%%%%%%%%%

\begin{proof}
To prove that $\psi$ exists and it is a monomorphism, 
we have only to prove the equality 
$({}^s\varphi(Q))^{\bot_m}=\varphi^{-1}(Q^{\bot_n})$. 
This is obvious by 
\begin{gather*}
f\in({}^s\varphi(Q))^{\bot_m}
\Longleftrightarrow
\forall q\in Q:\ S_{m,\BS{a}}\SLF{{}^s\varphi(q)}{f}=0
\\
\Longleftrightarrow
\forall q\in Q:\ S_{n,\BS{b}}\SLF{q}{\varphi(f)}=0
\Longleftrightarrow
f\in\varphi^{-1}(Q^{\bot_n})
.\end{gather*}
\begin{enumerate}
\item
If $Q$ is $D$-invariant, $Q^{\bot_n}$ is a proper ideal 
by Theorem \ref{LEMdclosed-ideal}. 
The algebra $\CAL{O}_{n,\BS{b}}/Q^{\bot_n}$ is 
Artinian because 
$\dim_{\BB{C}}\CAL{O}_{n,\BS{b}}/Q^{\bot_n}
=\dim_{\BB{C}}Q<\infty$ by 
\cite[4, \S 1, n${}^{\circ}5$, Proposition 5]{REFEVT}. Then 
$({}^s\varphi(Q))^{\bot_m}=\varphi^{-1}(Q^{\bot_n})$ is also 
an ideal and $\psi$ is a monomorphism of algebras. Then 
$\CAL{O}_{m,\BS{a}}/({}^s\varphi(Q))^{\bot_m}$ is 
also Artinian. 
\item
If $\varphi$ is an epimorphism, it is obvious that $\psi$ is an 
isomorphism and that the equality 
$({}^s\varphi(Q))^{\bot_m}=\varphi^{-1}(Q^{\bot_n})$ is 
equivalent to \\
$\varphi\big(({}^s\varphi(Q))^{\bot_m}\bigr)=Q^{\bot_n}$. 
Then, if $({}^s\varphi(Q))^{\bot_m}$ is an ideal, 
$Q^{\bot_n}$ is also so. 
This proves equivalence of $D$-invariant properties of 
${}^s\varphi(Q)$ and $Q$. The last assertion is trivial by (1).
\end{enumerate}
\end{proof}

%%%%%%%%%%%%%%%%%%%%%%%%%%%%%%%%%%%%%%%%%%
\section{Higher order tangents of Bos and Calvi}%
\label{section-bos-calvi-tg}
%%%%%%%%%%%%%%%%%%%%%%%%%%%%%%%
Following the method of Bos-Calvi, we introduce the space 
$D_{\BS{a}}^{\varphi,d}$ of higher order 
tangents of complex submanifold $X$ of an open subset 
$\Omega\subset\BB{C}^m$ at $\BS{a}\in X$. 
It is not an intrinsic 
object associated to $X_{\BS{a}}$ as a germ of 
a complex space but 
it reflects the properties of the embedding germ 
$X_{\BS{a}}\subset\BB{C}_{\BS{a}}^m$. It is also dependent upon 
the choice of the local parametrisation $\varphi$ of 
$X_{\BS{a}}$ in general. 
It is a dual space of the space $P^d(X_{\BS{a}})$ of the 
polynomial functions of degrees at most $d$. We will often skip 
the modifier ``higher order" for tangents.\\

Let $X_{\BS{a}}$ be the germ of a regular complex submanifold 
$X$ of an open subset $\Omega\subset\BB{C}^m$ at $\BS{a}$. 
The algebra $\CAL{O}_{X,\BS{a}}$ of germs of 
holomorphic functions 
on respective neighbourhoods (in $X$) of $\BS{a}$ at $\BS{a}$ is 
isomorphic to 
the factor algebra of $\CAL{O}_{m,\BS{a}}=\BB{C}\{\BS{x}\}$ 
$(\BS{x}:=(x_1,\dots,x_m))$ by the ideal $I_{\BS{a}}$ of 
convergent power series vanishing on the germ 
$X_{\BS{a}}$: 
$\CAL{O}_{X,\BS{a}}\cong\CAL{O}_{m,\BS{a}}/I_{\BS{a}}$. 
Hence $\CAL{O}_{X,\BS{a}}$ is an analytic local algebra. 
Let 
$$
\pi:\CAL{O}_{m,\BS{a}}\longrightarrow \CAL{O}_{X,\BS{a}}
$$
denotes the factor epimorphism. 
By the assumption that $X$ is a submanifold, 
there is an isomorphism 
$$
\psi:\,\CAL{O}_{X,\BS{a}}\longrightarrow\CAL{O}_{n,\BS{b}}
=\BB{C}\{\BS{t}-\BS{b}\}
\quad 
(\BS{t}:=(t_1,\dots,t_n),\ \dim X=n)
.$$
Then we have the epimorphism 
$$
\varphi:=\psi\COMP\pi:\,
\CAL{O}_{m,\BS{a}}\longrightarrow\CAL{O}_{n,\BS{b}}
.$$ 
This is just the epimorphism defined by the pullback 
by the germ 
of embedding 
$$
\BS{\Phi}=(\Phi_1,\dots,\Phi_m):\,\BB{C}^n_{\BS{b}}
\longrightarrow \BB{C}^m_{\BS{a}}
,$$ 
namely $\varphi(f)=f\COMP\BS{\Phi}$. 
We call this $\varphi$ or $\BS{\Phi}$ a 
\textit{local parametrisation} 
of $X$ at $\BS{a}$. 
Let $\BB{C}[\BS{\Phi}]\subset\CAL{O}_{n,\BS{b}}$ 
denote the algebras 
of pullbacks of $\BB{C}[\BS{x}]$ by $\varphi$. Let 
$$
P(X_{\BS{a}})=\BB{C}[\BS{x}]|_{X_{\BS{a}}}
=\pi(\BB{C}[\BS{x}])\subset\CAL{O}_{X,\BS{a}}
$$ 
denote the ring of germs of polynomial function on 
$X_{\BS{a}}$. It is easy to see the following. 
%%%%%%%%%%%%%%%%%%%%%%%%
\begin{lem}\label{LEMpolynomial}
We have the algebra isomorphism
$$
\BB{C}[\BS{\Phi}]:=\varphi(\BB{C}[\BS{x}])
=\psi(P(X_{\BS{a}}))\cong P(X_{\BS{a}})
.$$
\end{lem}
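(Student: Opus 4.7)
The plan is to unwind the definitions; there is essentially no obstacle. Note that $P(X_{\BS{a}}) = \BB{C}[\BS{x}]|_{X_{\BS{a}}}$ is, by construction, precisely the image $\pi(\BB{C}[\BS{x}]) \subset \CAL{O}_{X,\BS{a}}$, since restriction to the germ $X_{\BS{a}}$ is exactly reduction modulo the ideal $I_{\BS{a}}$ of germs vanishing on $X_{\BS{a}}$.

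First I would verify the first equality. By the definition $\phi := \psi \COMP \pi$, we have
\[
\phi(\BB{C}[\BS{x}]) = \psi(\pi(\BB{C}[\BS{x}])) = \psi(P(X_{\BS{a}})),
\]
which gives $\BB{C}[\BS{\Phi}] = \psi(P(X_{\BS{a}}))$. Concretely, each generator $x_i$ is sent by $\phi$ to the $i$-th component $\Phi_i$ of the local parametrisation, and a polynomial $F(\BS{x}) \in \BB{C}[\BS{x}]$ is sent to $F(\BS{\Phi}) = F(\Phi_1,\dots,\Phi_m)$, which is exactly the image under $\psi$ of the germ of the polynomial function $F|_{X_{\BS{a}}} \in P(X_{\BS{a}})$.

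Next I would observe that $\psi \colon \CAL{O}_{X,\BS{a}} \to \CAL{O}_{n,\BS{b}}$ is, by hypothesis, an isomorphism of analytic algebras; in particular it is a $\BB{C}$-algebra isomorphism. Its restriction to the subalgebra $P(X_{\BS{a}})$ is therefore an injective algebra homomorphism onto its image $\psi(P(X_{\BS{a}})) = \BB{C}[\BS{\Phi}]$. Hence $\BB{C}[\BS{\Phi}] \cong P(X_{\BS{a}})$ as $\BB{C}$-algebras, completing the proof. The only content is the bookkeeping that $P(X_{\BS{a}})$ is literally $\pi(\BB{C}[\BS{x}])$ and that $\phi$ factors as $\psi \COMP \pi$; no hard step arises.
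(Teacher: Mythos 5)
Your proof is correct and is exactly the definitional unwinding the paper has in mind; the paper itself omits the proof, saying only ``It is easy to see the following,'' and your argument (using $\phi = \psi \COMP \pi$, $P(X_{\BS{a}}) = \pi(\BB{C}[\BS{x}])$, and that $\psi$ is an algebra isomorphism restricting to an isomorphism onto its image) is precisely the content being left to the reader.
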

%%%%%%%%%%%%%%%%%%%%%

By the general property of the transposed mapping of a 
surjective 
homomorphism, ${}^t\varphi$ is injective and hence 
${}^s\varphi:\BB{C}[\BS{\tau}]\longrightarrow\BB{C}[\BS{\xi}]$ 
is also so. 
The image $D_{\BS{a}}^{\varphi}
:={}^s\varphi(\BB{C}[\BS{\tau}])$ 
is geometrically the space of higher order tangents of $X$ 
at $\BS{a}$. 
The property $D_{\BS{a}}^{\varphi}=I_{\BS{a}}^{\bot_m}$ 
shown below means 
that higher order tangents of $X_{\BS{a}}$ are just the higher 
order tangents of $\BB{C}^m_{\BS{a}}$ 
which annihilate all the functions vanishing on $X_{\BS{a}}$. 
%%%%%%%%%%%%%%%%%%%%%%%. 
%We call its element ${}^s\varphi(p)$ the 
%\textit{Bos-Calvi (higher order) tangent}. 
The sesquilinear form $S_{n,\BS{b}}$ induces a non-degenerate 
sesquilinear form
$$
{S}_{X,\BS{a}}^{\varphi}:D_{\BS{a}}^{\varphi}
\times \CAL{O}_{X,\BS{a}}\longrightarrow \BB{C}
$$
through $\psi$ and ${}^s\psi$ (see Diagram \ref{tab:4} in 
\S \ref{section-taylorian}).  
Let $I_{\BS{a}}^{\bot_m}$ and $I_{\BS{a}}^{\bot_X}$ denote 
the space of annihilators of $I_{\BS{a}}$ 
with respect to $S_{m,\BS{a}}^{\varphi}$ and 
$S_{X,\BS{a}}^{\varphi}$ respectively. 

%%%%%%%%%%%%%%%%%%%%%%%%%%%%%%%%%
\begin{prop}\label{PROindependence}
We have $(D_{\BS{a}}^{\varphi})^{\bot_m}=I_{\BS{a}}$ and 
$I_{\BS{a}}^{\bot_m}=D_{\BS{a}}^{\varphi}$. 
Hence $D_{\BS{a}}^{\varphi}$ is independent of the local 
parametrisation $\varphi$. 
\end{prop}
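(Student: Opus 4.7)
The plan is to prove the two equalities in sequence, starting with the ``easy'' one, and deduce the independence assertion as an immediate corollary of the second equality (since $I_{\BS{a}}$ is intrinsic to the germ $X_{\BS{a}}$).

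First, the inclusion $D_{X,\BS{a}}^{\phi}\subset I_{\BS{a}}^{\bot_m}$ is an immediate consequence of the defining property of the adjoint: for any $p\in\BB{C}[\BS{\tau}]$ and $f\in I_{\BS{a}}$ one has $\phi(f)=0$, whence
$$
S_{m,\BS{a}}\SLF{{}^s\phi(p)}{f}=S_{n,\BS{b}}\SLF{p}{\phi(f)}=0,
$$
so every Bos-Calvi tangent annihilates $I_{\BS{a}}$.

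Next I would establish $(D_{X,\BS{a}}^{\phi})^{\bot_m}=I_{\BS{a}}$ by a chain of equivalences. For $f\in\CAL{O}_{m,\BS{a}}$, $f\in(D_{X,\BS{a}}^{\phi})^{\bot_m}$ means $S_{m,\BS{a}}\SLF{{}^s\phi(p)}{f}=0$ for every $p\in\BB{C}[\BS{\tau}]$; rewriting via the adjoint identity, this is equivalent to $S_{n,\BS{b}}\SLF{p}{\phi(f)}=0$ for every such $p$, i.e.\ $\phi(f)\in\BB{C}[\BS{\tau}]^{\bot_n}$. By Proposition \ref{PROnondegenerate} this orthogonal is $\{0\}$, so the condition reduces to $\phi(f)=0$, which is exactly $f\in I_{\BS{a}}$.

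The main work is the reverse inclusion $I_{\BS{a}}^{\bot_m}\subset D_{X,\BS{a}}^{\phi}$. Given $q\in I_{\BS{a}}^{\bot_m}$, the weakly continuous linear form $\ell_q:=S_{m,\BS{a}}\SLF{q}{\cdot}$ on $\CAL{O}_{m,\BS{a}}$ vanishes on $I_{\BS{a}}=\Ker\phi$, so it factors uniquely through the quotient as a linear form $\overline{\ell}$ on $\CAL{O}_{m,\BS{a}}/I_{\BS{a}}\cong\CAL{O}_{n,\BS{b}}$. By Lemma \ref{LEMstrict} the factor epimorphism $\phi$ is weakly continuous and open, so the quotient weak topology on $\CAL{O}_{n,\BS{b}}$ coincides with its own weak topology, and $\overline{\ell}$ is weakly continuous. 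By the standard description of weakly continuous functionals in a dual pairing (Bourbaki \cite{REFEVT}, 4, \S 1, n${}^{\circ}$3), every weakly continuous linear functional on $\CAL{O}_{n,\BS{b}}$ is of the form $S_{n,\BS{b}}\SLF{p}{\cdot}$ for a unique $p\in\BB{C}[\BS{\tau}]$. Then, for every $f\in\CAL{O}_{m,\BS{a}}$,
$$
S_{m,\BS{a}}\SLF{q}{f}=\overline{\ell}(\phi(f))=S_{n,\BS{b}}\SLF{p}{\phi(f)}=S_{m,\BS{a}}\SLF{{}^s\phi(p)}{f},
$$
and the non-degeneracy of $S_{m,\BS{a}}$ (Proposition \ref{PROnondegenerate}) forces $q={}^s\phi(p)\in D_{X,\BS{a}}^{\phi}$. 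The expected obstacle is precisely this factorisation step: one has to recognise that weak continuity is preserved under the quotient and invoke the weak-dual identification; both facts are available from Lemma \ref{LEMstrict} and the Bourbaki reference, respectively. Since $I_{\BS{a}}$ is defined purely in terms of the embedded germ $X_{\BS{a}}$, the equality $D_{X,\BS{a}}^{\phi}=I_{\BS{a}}^{\bot_m}$ is manifestly independent of the choice of local parametrisation $\phi$, yielding the final assertion.
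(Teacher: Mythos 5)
Your proof is correct, and the first half coincides with the paper's: you establish $(D_{X,\BS{a}}^{\phi})^{\bot_m}=I_{\BS{a}}$ by the same chain of equivalences, hinging on the adjoint identity $S_{m,\BS{a}}\SLF{{}^s\phi(p)}{f}=S_{n,\BS{b}}\SLF{p}{\phi(f)}$ and the non-degeneracy of $S_{n,\BS{b}}$ from Proposition~\ref{PROnondegenerate}. Where you diverge is the hard inclusion $I_{\BS{a}}^{\bot_m}\subset D_{X,\BS{a}}^{\phi}$. The paper takes a bipolar route: from the first equality it writes $I_{\BS{a}}^{\bot_m}=(D_{X,\BS{a}}^{\phi})^{\bot_m\bot_m}$, and then reduces everything to showing that $D_{X,\BS{a}}^{\phi}={}^s\phi(\BB{C}[\BS{\tau}])$ is weakly closed in $\BB{C}[\BS{\xi}]$, which it gets from Bourbaki's theorem (EVT IV, \S 4, n${}^{\circ}1$, Prop.\ 4) that the transpose of an open continuous epimorphism has weakly closed image, with Lemma~\ref{LEMstrict} supplying the open continuity of $\phi$. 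You instead argue on the dual side directly: the form $\ell_q$ vanishes on $\Ker\phi=I_{\BS{a}}$, factors through the quotient, and since by Lemma~\ref{LEMstrict} the quotient topology is the weak topology on $\CAL{O}_{n,\BS{b}}$, the induced form is weakly continuous and therefore representable by some $p\in\BB{C}[\BS{\tau}]$; pulling back gives $q={}^s\phi(p)$. Both arguments lean on the same Lemma~\ref{LEMstrict}, but yours replaces the rather specialized Bourbaki proposition on transposed maps by the more elementary universal property of quotient topologies plus the description of the weak dual of a dual pair, so it is arguably a bit more self-contained; the paper's version is terser because it packages the whole second step into a single cited theorem. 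One small terminological point: since $S_{m,\BS{a}}$ is sesquilinear, $\ell_q$ is conjugate-linear rather than linear in $f$; this does not harm the argument (compose with the involution $u$, which is a weak homeomorphism, to reduce to genuine linear functionals, as Proposition~\ref{PROorthogonal} does), but it deserves a word so that the appeal to the Bourbaki duality theorem is literally applicable. The preliminary ``easy inclusion'' paragraph is harmless but redundant once the two equalities are in place.
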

%%%%%%%%%%%%%%%%%%%%%%%%%

Thus we may omit the superscript $\varphi$ of 
$D_{\BS{a}}^{\varphi}$. 

\begin{proof}
The first equality follows from the following implications.
\begin{gather*}
f\in I_{\BS{a}}
\Longleftrightarrow
\varphi(f)=0
\Longleftrightarrow
\forall p\in\BB{C}[\BS{\tau}]:\,
S_{n,\BS{b}}\SLF{p}{\varphi(f)}=0
\\
\Longleftrightarrow
\forall p\in\BB{C}[\BS{\tau}]:\,
S_{m,\BS{a}}\SLF{{}^s\varphi(p)}{f }=0
\Longleftrightarrow
f\in({}^s\varphi(\BB{C}[\BS{\tau}]))^{\bot_m}
\Longleftrightarrow
f\in(D_{\BS{a}}^{\varphi})^{\bot_m}
\end{gather*}

Then we have 
$I_{\BS{a}}^{\bot_m}=\left(D_{\BS{a}}^{\varphi}\right)^{\bot_m\bot_m}$. 
To see the equality $I_{\BS{a}}^{\bot_m}=D_{\BS{a}}^{\varphi}$, 
we have only to prove that 
$(D_{\BS{a}}^{\varphi})^{\bot_m\bot_m}=D_{\BS{a}}^{\varphi}$ 
or that 
$D_{\BS{a}}^{\varphi}={}^s\varphi(\BB{C}[\BS{\tau}])$ is 
weakly closed 
in $\BB{C}[\BS{\xi}]$. 
Since $\varphi$ is an open continuous epimorphism by 
{Lemma \ref{LEMstrict}}, 
${}^t\varphi(\BB{C}[\BS{\tau}])$ is weakly closed 
by Bourbaki \cite[4, \S 4, n${}^{\circ}1$, Proposition 4]{REFEVT}.  
Since the complex conjugations 
$u:\,\BB{C}[\BS{\tau}]\longrightarrow\BB{C}[\BS{\tau}]$ 
and 
$u:\,\BB{C}[\BS{\xi}]\longrightarrow\BB{C}[\BS{\xi}]$ are 
homeomorphisms, 
${}^s\varphi(\BB{C}[\BS{\tau}])
=u\COMP{}^t\varphi\COMP u(\BB{C}[\BS{\tau}])$ 
is also weakly closed. 
\end{proof}

Let 
$$
P^d(X_{\BS{a}}):=\{p\mod I_{\BS{a}}:\,p
\in\BB{C}[\BS{x}],\ \deg p\le d\}
\subset P(X_{\BS{a}})\subset\CAL{O}_{X,\BS{a}}
$$
denote the vector space of \textit{polynomial functions} on 
$X$ of degree at most $d$ at $\BS{a}$.

%%%%%%%%%%%%%%%%%
\begin{rem}\label{RMKjustify}
If $X_{\BS{a}}$ is defined by an ideal 
$I_{\BS{a}}\subset\CAL{O}_{m,\BS{a}}$, the algebra 
$$
\BB{C}[\BS{x}]/
\bigl((I_{\BS{a}}+\FRAK{m}_{\BS{a}}^{d+1})\cap
\BB{C}[\BS{x}]\bigr)
\cong
\BB{C}\{\BS{x}\}/
(I_{\BS{a}}+\FRAK{m}_{\BS{a}}^{d+1})
$$
is not isomorphic the vector space $P^d(X_{\BS{a}})$ 
in general. The canonical mapping 
$$
\pi_d:\,P^d(X_{\BS{a}})\longrightarrow
\BB{C}[\BS{x}]/
\bigl((I_{\BS{a}}+\FRAK{m}_{\BS{a}}^{d+1})
\cap\BB{C}[\BS{x}]\bigr)
$$
is surjective but not always injective. 
\end{rem}
%%%%%%%%%%%%%%%%%%%%%

Let
$$ 
\BB{C}[\BS{x}]^d:=\{f(\BS{x}):f\in \BB{C}[\BS{x}],\ 
\deg f\le d\}
\subset\BB{C}[\BS{x}]
$$
denote the vector space of polynomials of degrees at most $d$. 
If we put 
$$
\BB{C}[\BS{\Phi}]^d:=\varphi(\BB{C}[\BS{x}]^d)
=\psi(P^d(X_{\BS{a}}))
\subset\CAL{O}_{n,\BS{b}}
$$
using a local parametrisation $\BS{\Phi}$, 
we have an increasing sequence of 
finite-dimensional vector subspaces 
$$
\BB{C}=\BB{C}[\BS{\Phi}]^0\subset \BB{C}[\BS{\Phi}]^1
\subset\cdots
$$
of the $\BB{C}$-algebra $\BB{C}[\BS{\Phi}]
\subset\CAL{O}_{n,\BS{b}}$. 
Then we have a sequence 
$$
\BB{C}=\BB{C}[\BS{\Phi}]^0_{\BS{b}}\DA
\subset \BB{C}[\BS{\Phi}]^1_{\BS{b}}\DA\subset\cdots
$$
of finite-dimensional vector subspaces of 
$\BB{C}[\BS{\tau}]=\CAL{O}_{n,\BS{b}}\DA$. 
%%%%%%%%%%%%%%%%%%%
Let us fix the degree $d$ hereafter. Recall that 
$S_{n,\BS{b}}$ induces a 
non-degenerate sesquilinear form 
$$
S^{\varphi,d}_{n,\BS{b}}:
\BB{C}[\BS{\Phi}]^d_{\BS{b}}\DA\times\BB{C}[\BS{\Phi}]^d
\longrightarrow\BB{C}
$$
by Theorem \ref{THMminimal}. 
%%%%%%%%%%%%%%%%%%%%%%
\begin{defin}\label{DEFbctg}
Let us put 
$$
D_{\BS{a}}^{\varphi,d}
:={}^s\varphi(\BB{C}[\BS{\Phi}]^d_{\BS{b}}\DA)
\subset D_{\BS{a}}
.$$ 
Since there is a natural isomorphism 
$\psi|_{P^d(X_{\BS{a}})}:\,P^d(X_{\BS{a}})
\longrightarrow\BB{C}[\BS{\Phi}]^d$ 
by Lemma \ref{LEMpolynomial}, we see that 
$S^{\varphi,d}_{n,\BS{b}}$ induces 
a non-degenerate sesquilinear form
$$
{S}_{X,\BS{a}}^{\varphi,d}:
D_{\BS{a}}^{\varphi,d}\times P^d(X_{\BS{a}})
\longrightarrow \BB{C}
.$$
We call the elements of $D_{\BS{a}}^{\varphi,d}$ 
\textit{Bos-Calvi tangents of $X_{\BS{a}}$ of dual degree} $d$ 
(see Diagram \ref{tab:4} in \S \ref{section-taylorian}). 
\end{defin}
%%%%%%%%%%%%%%%%%%%%%%

Usually, some element of $D_{\BS{a}}^{\varphi,d}$ has 
a degree higher 
than $d$ as we will see in the examples below. 
%%%%%%%%%%%%%%%%%%%%%%%%%%%
\begin{lem}\label{LEMcoordtr}
Let $X_{\BS{a}}$ be the germ of a regular complex 
submanifold $X$ 
of an open subset $\Omega\subset\BB{C}^m$ at $\BS{a}$. 
Take two local parametrisations 
$$
\BS{\Phi}:\,\BB{C}_{\BS{b}}^n\longrightarrow\BB{C}_{\BS{a}}^m,
\qquad
\BS{\Psi}:\,\BB{C}_{\BS{b}'}^n\longrightarrow\BB{C}_{\BS{a}}^m
$$
of $X_{\BS{a}}$. Let 
$\BS{\Theta}:\,
\BB{C}_{\BS{b}}^n\longrightarrow\BB{C}_{\BS{b}'}^n$ 
denote the biholomorphic germ (coordinate change) such that 
$\BS{\Phi}=\BS{\Psi}\COMP\BS{\Theta}$ and 
$\varphi=\theta\COMP\psi$. 
Then we have the following. 
\begin{enumerate}
\item
The isomorphism $\theta$ induces isomorphsm 
$\theta|_{\BB{C}[\BS{\Psi}]^d}:\,
\BB{C}[\BS{\Psi}]^d\longrightarrow\BB{C}[\BS{\Phi}]^d$.  
\item
If $\BS{b}$ is a bundle points of all the jet spaces of 
$\BB{C}[\BS{\Phi}]^d$, it is the same for 
$\BB{C}[\BS{\Psi}]^d$. 
If $\BS{b}$ is an $D$-invariant point of 
$\BB{C}[\BS{\Phi}]_{\BS{b}}^d\DA$, 
it is the same for $\BB{C}[\BS{\Psi}]_{\BS{b}}^d\DA$. 
\end{enumerate}
\end{lem}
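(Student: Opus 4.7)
The plan is to work through the three parts in order, with (1) and (2) essentially bookkeeping consequences of earlier results, while (3) is the substantive step where the hypothesis $(*)$ must be upgraded from inclusion to equality via a dimension count.

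For part (1), I unwind the definitions: since $\phi=\theta\COMP\psi$,
$$
\BB{C}[\BS{\Phi}]^d=\phi(\BB{C}[\BS{x}]^d)
=\theta(\psi(\BB{C}[\BS{x}]^d))=\theta(\BB{C}[\BS{\Psi}]^d).
$$
Because $\theta$ is an isomorphism of analytic algebras (the pullback by the biholomorphic germ $\BS{\Theta}$), its restriction to the finite-dimensional subspace $\BB{C}[\BS{\Psi}]^d$ is a linear isomorphism onto $\BB{C}[\BS{\Phi}]^d$, as required.

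For part (2), the pullback $\theta$ identifies $\BB{C}[\BS{\Psi}]^d$, regarded as a family of germs near $\BS{b}'$, with $\BB{C}[\BS{\Phi}]^d$ as a family of germs near $\BS{b}=\BS{\Theta}^{-1}(\BS{b}')$; the jet bundles over a neighbourhood of $\BS{b}$ and a neighbourhood of $\BS{b}'$ correspond under $\BS{\Theta}$. Since Corollary \ref{CORbdl-nbd} asserts that the set of bundle points of all jet spaces is invariant under biholomorphic coordinate change for fixed $Z$, the bundle-point property at $\BS{b}$ for $\BB{C}[\BS{\Phi}]^d$ transfers to that at $\BS{b}'$ for $\BB{C}[\BS{\Psi}]^d$.

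For part (3), taking adjoints of $\phi=\theta\COMP\psi$ gives ${}^s\phi={}^s\psi\COMP{}^s\theta$, whence
$$
D_{X,\BS{a}}^{\phi,d}={}^s\phi\bigl(\BB{C}[\BS{\Phi}]^d_{\BS{b}}\DA\bigr)
={}^s\psi\bigl({}^s\theta(\BB{C}[\BS{\Phi}]^d_{\BS{b}}\DA)\bigr)
\subset {}^s\psi\bigl(\BB{C}[\BS{\Psi}]^d_{\BS{b}'}\DA\bigr)
=D_{X,\BS{a}}^{\psi,d},
$$
where the inclusion uses hypothesis $(*)$ (with the base point understood as $\BS{b}'$). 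The reverse inclusion I would obtain by comparing dimensions. By part (1), $\dim_{\BB{C}}\BB{C}[\BS{\Phi}]^d=\dim_{\BB{C}}\BB{C}[\BS{\Psi}]^d$; by Theorem \ref{THMgraded-space} these agree with $\dim_{\BB{C}}\BB{C}[\BS{\Phi}]^d_{\BS{b}}\DA$ and $\dim_{\BB{C}}\BB{C}[\BS{\Psi}]^d_{\BS{b}'}\DA$ respectively. Since $\theta$ is an algebra isomorphism, ${}^s\theta$ is a linear isomorphism of the ambient polynomial algebras, hence injective on $\BB{C}[\BS{\Phi}]^d_{\BS{b}}\DA$; and ${}^s\psi$ is injective because $\psi$ is surjective (as noted just before Proposition \ref{PROindependence}). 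It follows that $\dim_{\BB{C}}D_{X,\BS{a}}^{\phi,d}=\dim_{\BB{C}}D_{X,\BS{a}}^{\psi,d}$, so the inclusion is an equality.

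The main point requiring care is in (3): one must keep the two base points $\BS{b}$ and $\BS{b}'$ distinct while noting that ${}^s\theta$ is an honest isomorphism, not merely a weakly continuous map, so that the dimension count forces the inclusion supplied by $(*)$ to be an equality. Parts (1) and (2) reduce immediately to Lemma \ref{LEMpolynomial} and Corollary \ref{CORbdl-nbd}.
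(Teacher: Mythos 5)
Your proof is correct in all three parts. Parts (1) and (2) match the paper's argument exactly: (1) unwinds $\phi=\theta\COMP\psi$ to exhibit $\theta$ as an isomorphism $\BB{C}[\BS{\Psi}]^d\to\BB{C}[\BS{\Phi}]^d$, and (2) invokes the coordinate invariance of the bundle-point property from Corollary \ref{CORbdl-nbd}.

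For part (3) you take a modestly different route from the paper. The paper introduces the adjoint $\,{}^s(\theta|_{\BB{C}[\BS{\Psi}]^d}):\,\BB{C}[\BS{\Phi}]^d_{\BS{b}}\DA\to\BB{C}[\BS{\Psi}]^d_{\BS{b}'}\DA$, which is automatically an isomorphism of finite-dimensional vector spaces, and then uses the hypothesis $(*)$ to make the diagram linking ${}^s\iota$, ${}^s\iota'$, ${}^s\theta$ and ${}^s(\theta|_{\BB{C}[\BS{\Psi}]^d})$ commute; this identifies $({}^s\theta)|_{\BB{C}[\BS{\Phi}]^d_{\BS{b}}\DA}$ with $\,{}^s(\theta|_{\BB{C}[\BS{\Psi}]^d})$ and hence forces the inclusion $(*)$ to be an equality, giving $D_{X,\BS{a}}^{\phi,d}=D_{X,\BS{a}}^{\psi,d}$ by a single chain of identifications. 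You instead obtain the inclusion $D_{X,\BS{a}}^{\phi,d}\subset D_{X,\BS{a}}^{\psi,d}$ directly from ${}^s\phi={}^s\psi\COMP{}^s\theta$ and $(*)$, and then upgrade it to equality by the dimension count: Theorem \ref{THMgraded-space} equates $\dim\BB{C}[\BS{\Phi}]^d_{\BS{b}}\DA$ with $\dim\BB{C}[\BS{\Phi}]^d$ (and similarly for $\Psi$), part (1) equates the right-hand sides, and injectivity of ${}^s\phi$ and ${}^s\psi$ (from surjectivity of $\phi$, $\psi$) passes this through to the Bos--Calvi tangent spaces. Both arguments are valid; yours trades the diagram chase for an explicit count, which makes the role of finite-dimensionality and of the injectivity of the adjoints a bit more transparent, while the paper's version exhibits the structural isomorphism explicitly, which it is useful to have on record. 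There is no gap in your reasoning.
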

%%%%%%%%%%%%%%%%%%%%

\begin{proof}
There are isomorphisms 
$$
\BB{C}[\BS{\Phi}]^d
\overset{\cong}{\longrightarrow}P^d(X_{\BS{a}})
\overset{\cong}{\longleftarrow}\BB{C}[\BS{\Psi}]^d
$$
obtained as a restriction of the isomorphism in 
Lemma \ref{LEMpolynomial}. 
Since 
$$
\theta|_{\BB{C}[\BS{\Psi}]^d}:\,
\BB{C}[\BS{\Psi}]^d\longrightarrow\BB{C}[\BS{\Phi}]^d
$$ 
is compatible to these mappings, it is also an isomorphism. 
The assertion (2) follows from Theorem \ref{THMintrinsic}. 
\end{proof}

%%%%%%%%%%%%%%%%%%%%%%%%%%%%%%%%%%%%%%%%%
\begin{exa}\label{EXAdifference}
%%%%%%%%%%%%%%%%%%%%%%%%%%%%%
If $\dim X\ge 2$, $D_{\BS{a}}^{\varphi,d}$ is very sensitive to 
a change of the local parametrisation even in the case $d=1$. 
Let us consider the surface $X\subset\BB{C}^3$ defined by 
$x_3=x_2^2$. Take two global parametrisations: 
$$
\varphi:\,x_1=s_1,\ x_2=s_2,\ x_3=s_2^2;
$$$$
\psi:\,x_1=t_1+t_2,\ x_2=t_2,\ x_3=t_2^2
.$$
These are related by a simple linear transformation of local 
coordinates: $t_1=s_1-s_2$, $t_2=s_2$. 
The dimensions of the spaces $R_k(\BS{b})$ 
$(\BS{a}=\varphi(\BS{b}))$ for $\BB{C}[\BS{\Phi}]^1$ 
defined in \S \ref{section-wronskian} are independent of 
$\BS{b}\in\BB{R}^2$:
$$
\dim_{\BB{C}}R_0(\BS{b})=1,\ \dim_{\BB{C}}R_1(\BS{b})=3,\ 
\dim_{\BB{C}}R_k(\BS{b})=4\ (k\ge 2)
.$$
This means that all points are 
bundle points of all the jet spaces of $\BB{C}[\BS{\Phi}]^1$. 
Then $\BB{C}[\BS{\Psi}]^1$ has also the bundle property 
everywhere by Lemma \ref{LEMcoordtr}. Let us put 
$$
\sigma_i:=s_{i,\BS{b}}\DA,\quad \tau_i:=t_{i,\BS{b}}\DA,\quad 
\xi_i:=x_{i,\BS{a}}\DA
.$$ 
The least spaces of the pullbacks of polynomials of degree at 
most 1 with respect to them are 
\begin{gather*}
\BB{C}[\BS{\Phi}]^1_{\BS{b}}\DA=
\Span_{\BB{C}}(1,\ \sigma_1,\ \sigma_2,\ \sigma_2^2)
,\quad
\BB{C}[\BS{\Psi}]^1_{\BS{b}}\DA=
\Span_{\BB{C}}(1,\ \tau_1,\ \tau_2,\ \tau_2^2),
\end{gather*}
and they are $D$-invariant everywhere (which is also a 
consequence of Theorem \ref{THMd-closed}). 
The push-forwards of these bases are computed as follows. 
%We got the following immediately using a computer 
%algebra system: 
%%%%%%%%%%%%
\begin{gather*}
{}^s\varphi(1)
=
1,\quad
%%%%%%%%%
{}^s\varphi(\sigma_1)
=
\xi_1,\quad
%%%%%%%%%
{}^s\varphi(\sigma_2)
=
\xi_2
+2 s_2 \xi_3
\\
%%%%%%%%%%
{}^s\varphi(\sigma_2^2)
=
2 \xi_3+\xi_2^2
+4 s_2 \xi_2 \xi_3+4 s_2^2 \xi_3^2;
\end{gather*}
%%%%%%%%%%%%%%%%%%%%%
\begin{gather*}
{}^s\psi(1)
=
1,\quad
%%%%%%%%%
{}^s\psi(\tau_1)=
\xi_1,
\quad
{}^s\psi(\tau_2)
=
\xi_1+\xi_2+2t_2\xi_3,
\\
{}^s\psi(\tau_2^2)=
2\xi_3
+\xi_1^2
+2\xi_1\xi_2
+4t_2\xi_1\xi_3
+\xi_2^2
+4t_2\xi_2\xi_3
+4t_2^2\xi_3^2.
\end{gather*}
%%%%%%%%%%%%%%
The monomial $\xi_1^2$ appears in the linear span of the latter 
but not in that of the former. Hence two spaces of 
Bos-Calvi tangents 
$D_{\BS{a}}^{\varphi,1}$ and $D_{\BS{a}}^{\psi,1}$ at 
$\BS{a}=\BS{\Phi}(\BS{b})=\BS{\Psi}(\BS{b})\in X$ 
are different. 
\end{exa}
%%%%%%%%%%%%%%%%%%%
\begin{rem}\label{covariant}
In this paper let us use the word ``contravariant" 
to mean that the objects are mapped in the opposit direction to 
the geometric mapping of underlying complex spaces. 
This is an intrinsic usage and it is equivalent to 
``covariant in 
the classical sense" which pays attension to change of 
components (coefficients). 
The inconsistency in Example \ref{EXAdifference} 
originates from 
the treatment of the elements of the least space. 
An element of $\BB{C}[\BS{t}]_{\BS{b}}^1\DA$ 
is defined as a tensor of cotangents, a contravariant object. 
Then we identify it as a higher order tangent, 
a covariant one, using a positive 
sesquilinear form (see Remark \ref{REMhermitian}) 
and send it to $\BS{C}[\BS{\xi}]$ as a higher order tangent. 
Thus our higher order tangents are not geometric 
objects. 
\end{rem}
%%%%%%%%%%%%%%%%%%%%%

%%%%%%%%%%%%%%%%%%%%%%%%%%%%%%%%%%%%
\section{Taylor projector}\label{section-taylorian}
%%%%%%%%%%%%%%%%%%%%%%%%%%%%%%%%%%%%%
Now we can introduce the Taylor projector of degree $d$ 
using Bos-Calvi tangents defined in the previous section. 
In general, this projector depends upon the local 
parametrisation but, in the case of a curve, it is 
independent at a general point.  
\\

Assume the same as in the previous section for 
$\BS{a}\in X\subset\BB{C}^m$ 
and its local parametrisation 
$\varphi:\,\CAL{O}_{m,\BS{a}}
\longrightarrow\CAL{O}_{n,\BS{b}}$. 
%%%%%%%%%%%%%%%%%%%%%%%%%%
\begin{defin}
Let $\iota:\,D_{\BS{a}}^{\varphi,d}\longrightarrow D_{\BS{a}}$
denote the inclusion mapping. 
We call its adjoint linear mapping 
$$
\TAY_{\BS{a}}^{\varphi,d}:={}^s\iota:\,\CAL{O}_{X,\BS{a}}
\longrightarrow P^d(X_{\BS{a}})
$$
\textit{$\varphi$-Taylor projector of degree} $d$ at $\BS{a}$. 
This was introduced by 
Bos and Calvi \cite{REFBC-1}, \cite{REFBC-2}. 
It is a little different from ours. 
Their projector is the composition 
of our $\TAY_{\BS{a}}^{\varphi,d}$ and the factor epimorphism 
$\pi:\CAL{O}_{m,\BS{a}}\longrightarrow\CAL{O}_{X,\BS{a}}$. 
The image $\TAY_{\BS{a}}^{\varphi,d}(f)$ is called the 
$\varphi$-\textit{Taylor polynomial} of $f$ of degree $d$. 
\end{defin}
%%%%%%%%%%%%%%%%%%%%
We know the following by Proposition \ref{PROprojector}. 
\begin{enumerate}
\item
The $\varphi$-Taylor projector $\TAY_{\BS{a}}^{\varphi,d}$ is 
a weakly continuous linear mapping.
\item 
We have the equalities 
$$
\Ker \TAY_{\BS{a}}^{\varphi,d}
=(D_{\BS{a}}^{\varphi,d})^{\bot_X}
,\quad
(\Ker\TAY_{\BS{a}}^{\varphi,d})^{\bot_X}=D_{\BS{a}}^{\varphi,d}
,$$$$
\Ker \TAY_{\BS{a}}^{\varphi,d}\COMP\pi
=(D_{\BS{a}}^{\varphi,d})^{\bot_m}
,\quad
(\Ker\TAY_{\BS{a}}^{\varphi,d}\COMP\pi)^{\bot_m}=D_{\BS{a}}^{\varphi,d}
,$$
where 
$\bot_X$ (resp. $\bot_m$) denotes the space of 
annihilators with respect to 
$S_{X,\BS{a}}^{\BS{\varphi}}$ 
(resp. $S_{m,\BS{a}}^{\BS{\varphi}}$).
\item 
The $\varphi$-Taylor projector 
$\TAY_{\BS{a}}^{\varphi,d}$ is a retraction of 
a vector space, i.e. 
$\TAY_{\BS{a}}^{\varphi,d}\COMP\kappa:\, P^d(X_{\BS{a}})
\longrightarrow P^d(X_{\BS{a}})$ is 
the identity, where $\kappa:\, P^d(X_{\BS{a}})
\longrightarrow \CAL{O}_{X,\BS{a}}$ 
denotes the inclusion. 
\end{enumerate}
%%%%%%%%%%%%%%%%%%%%%%%%%%%
\renewcommand{\tablename}{\textsf{diagram}}
\begin{table}[h]
\caption{\textsf{\scriptsize%
Dualities by the sesquilinear forms} $\DA$}
\label{tab:4}    % Give a unique label
\unitlength=.3em
% \unitlength=1mm
\begin{center}%\fbox{
\begin{picture}(92,62)
%%%%%%%%%%
\put(6,48){$\CAL{O}_{n,\BS{b}}$}
%%%%%%%
\put(15,43){\rotatebox[origin=c]{-35}{$\hookleftarrow$}}
%%%%%%%%%%
\put(21,38){$\BB{C}[\BS{\Phi}]^d
\hspace{1.6em} \stackrel{\ \cong}{\longleftarrow}
\hspace{1em} P^d(X_{\BS{a}})$}
%%%%%%%
\put(57,42){\rotatebox[origin=c]{35}{$\hookrightarrow$}}
\put(53,43){\rotatebox[origin=c]{35}{$\dashleftarrow$}}
%%%%%%%
\put(59,40){$\kappa$}
\put(50,45){$\TAY_{\BS{a}}^{\varphi,d}$}
%%%%%%%%
\put(61,48){$\CAL{O}_{X,\BS{a}}\hspace{1em}
\overset{\pi}{\longleftarrow}\hspace{.8em}\CAL{O}_{m,\BS{a}}$}
%%%%%%%
\put(61,12){$D_{\BS{a}}\hspace{1.2em}
\hookrightarrow\hspace{1.2em}\BB{C}[\BS{\xi}]$}
%%%%%%%%% 
\put(6,12){$\BB{C}[\BS{\tau}]$}
%%%%%%%%%
\put(15,16){\rotatebox[origin=c]{35}{$\hookleftarrow$}}
%%%%%%%%%
\put(56,16){\rotatebox[origin=c]{-35}{$\hookrightarrow$}}
%%%%%%%%%%
\put(59,18){$\iota$}
%%%%%%%%%
\put(21,20){$\BB{C}[\BS{\Phi}]^d_{\BS{b}}\DA
\hspace{1.6em} \stackrel{\cong\ }{\longrightarrow}
\hspace{1.6em} D_{\BS{a}}^{\varphi,d}$}
%%%%%%%%
\put(15.8,48.4){$\scriptscriptstyle\BS{\langle}$}
\path(16,49)(58,49)
\put(36,50.4){$\scriptstyle\cong\ $}
\path(16,48.96)(58,48.96)
\put(35,46.2){$\psi$}
%%%%%%%%
\path(17,13)(58,13)
\put(34.5,14){$\scriptstyle\ \cong$}
\path(17,12.96)(58,12.96)
\put(57.2,12.35){$\scriptscriptstyle\BS{\rangle}$}
\put(35,10){${}^s\psi$}
%%%%%%%%%%%%
\path(16,3)(76,3)
\put(16,10){\arc{14}{1.57}{3.14}}
\put(76,10){\arc{14}{0}{1.57}}
\put(45,5){${}^s\varphi$}
\path(82.2,9.3)(83,10)(83.6,9.5)
\path(82.2,9.3)(83,9.9)(83.8,9.3)
%%%
\path(16,60.1)(76,60.1)
\put(16,53){\arc{14}{3.14}{4.71}}
\put(76,53){\arc{14}{4.71}{6.28}}
\put(45,5){${}^s\varphi$}
\path(8.4,53.6)(9,53)(9.6,53.6)
\path(8.2,53.8)(9,53.1)(9.8,53.8)
\put(45,57.5){$\varphi$}
%%%%%%%%%%%
\thicklines
\path(8.7,17)(8.7,45)
 \path(8.8,17)(8.8,45)
 \put(1.9,29){$S_{n,\BS{b}}$}
\path(25.7,25)(25.7,35)
 \path(25.8,25)(25.8,35)
 \put(19,29){$S_{n,\BS{b}}^{\varphi,d}$}
 \dashline[50]{1.5}(49.7,25)(49.7,35)
 \dashline[50]{1.5}(49.8,25)(49.8,35)
 \put(51,29){$S_{X,\BS{a}}^{\varphi,d}$}
\dashline[50]{1.5}(62.7,17)(62.7,45)
 \dashline[50]{1.5}(62.8,17)(62.8,45)
 \put(64,29){$S_{X,\BS{a}}^{\BS{\varphi}}$}
\path(82.7,17)(82.7,45)
 \path(82.8,17)(82.8,45)
 \put(84,29){$S_{m,\BS{a}}$}
%%%%%%%%%%%%
\end{picture}
%}
\end{center}
\end{table}
%%%%%%%%%%%%%%%%%%%%%

Summing up we have Diagram \ref{tab:4}, where the bold lines 
imply the dual pairings $S_{m,\BS{a}}$ and $S_{n,\BS{b}}$ 
with respect 
to the affine coordinates $\BS{x}$ and $\BS{t}$ 
and the dotted ones imply those induced 
by $S_{n,\BS{b}}$ through isomorphisms $\psi$ and ${}^s\psi$. 
The upper half and the lower half correspond mutually 
by taking adjoint, 
excepting the inclusions in the upper half. 
%%%%%%%%%%%%%%%%%%%%%%%%%
\begin{exa}\label{taylor-deg-one}
Take an analytic set 
$$
X:=\{(x,y,z)\in\BB{C}^3:\ z=x+y+x^2+y^2\}
$$
with a parametrisation $\Phi$ defined by: 
$$
\varphi:\ x=s,\ y=t,\ z=s+t+s^2+t^2
.$$
Obviously we have 
$$
\BB{C}[\BS{\Phi}]_{\BS{0}}^1\DA
=\Span_{\BB{C}}(1,\sigma,\tau,\sigma^2+\tau^2)
.$$
The pushforwards of 
$\sigma,\ \sigma^2\in\BB{C}[\BS{\Phi}]_{\BS{0}}^1\DA$ 
are calculated as follows:
$$
\dfrac{\partial f(s,t,s+t+s^2+t^2)}{\partial s}=f_x+(1+2s)f_z
,$$$$
{}^s\varphi(\sigma)=\xi+\zeta
;$$
\begin{gather*}
\dfrac{\partial^2 f(s,t,s+t+s^2+t^2)}{\partial s^2}
=\dfrac{\partial}{\partial s}(f_x+(1+2s)f_z)
\\
=f_{xx}+(1+2s)f_{xz}
+2f_z
+(1+2s)f_{zx}+(1+2s)^2f_{zz},
\end{gather*}
$$
{}^s\varphi(\sigma^2)=2\zeta+\xi^2+2\xi\zeta+\zeta^2
.$$
By the symmetry of $\sigma$ and $\tau$, we have the following: 
$$
{}^s\varphi(\tau)=\eta+\zeta,\quad 
{}^s\varphi(\tau^2)=2\zeta+\eta^2+2\eta\zeta+\zeta^2
.$$
Thus we have 
\begin{gather*}
D_{\BS{0}}^{\varphi,1}
=\Span_{\BB{C}}({}^s\varphi(1),{}^s\varphi(\sigma),
{}^s\varphi(\tau),{}^s\varphi(\sigma^2+\tau^2))
\\
=\Span_{\BB{C}}(1,\xi+\zeta,\eta+\zeta,
4\zeta+\xi^2+\eta^2+2\xi\zeta+2\eta\zeta+2\zeta^2)
.\end{gather*}
Then the element 
$f:=\sum_{i,j,k=0}^{\infty}a_{ijk}x^iy^jz^k
\in\CAL{O}_{3,\BS{0}}$ 
belongs to 
$(\BB{C}[\BS{\Phi}]_{\BS{0}}^1\DA)^{\bot_3}$ 
if and only if the following hold:
$$
a_{000} = a_{100}+a_{001} = a_{010}+a_{001} 
= 2a_{001}+a_{200}+a_{020}+a_{101}+a_{011}+2a_{002} = 0
.$$
All the functions $f(x,y,z)\in\CAL{O}_{3,\BS{0}}$ 
of degree greater than 2 
at $\BS{0}$ is contained in 
$(\BB{C}[\BS{\Phi}]_{\BS{0}}^1\DA)^{\bot_3}$ 
and hence the Taylor expansion of their restrictions to 
$X$ are identically 0. 
Taylor expansions of the restrictions to $X$ of 
linear functions are 
identity by the property (3) above. 
Let us calculate the Taylor expansion of $x^2|_X$. We have  
\begin{gather*}
S_{3,\BS{0}}\SLFF{1}{x^2}=0
,\quad
S_{3,\BS{0}}\SLFF{\xi+\zeta}{x^2}=0
,\quad
S_{3,\BS{0}}\SLFF{\eta+\zeta}{x^2}=0
,\\
S_{3,\BS{0}}\SLFF{4\zeta+\xi^2+\eta^2%
+2\xi\zeta+2\eta\zeta+2\zeta^2}{x^2}=2
,\end{gather*}
on the other hand, we have 
\begin{gather*}
S_{3,\BS{0}}\SLF{1}{a+bx+cy+dz}=a,
\\
S_{3,\BS{0}}\SLF{\xi+\zeta}{a+bx+cy+dz}=b+d,
\\
S_{3,\BS{0}}\SLF{\eta+\zeta}{a+bx+cy+dz}=c+d,
\\
S_{3,\BS{0}}\SLFF{4\zeta+\xi^2+\eta^2+2\xi\zeta
+2\eta\zeta+2\zeta^2}{a+bx+cy+dz}=4d
.\end{gather*}
Solving the equations 
$a=0,\ b+d=0,\ c+d=0,\ 4d=2$, 
we have $a=0,\ b=c=-1/2,\ d=1/2$. This implies that 
$\TAY_{X,\BS{0}}^{\varphi,1}\left(x^2|_X\right)
=\dfrac{-x-y+z}{2}\biggm|_X$. 
\end{exa}

%%%%%%%%%%%%%%%%%%%%%%%%%
Let us recall that, even in the 1-dimensional case, 
there can exist 
a point with two different Taylor projectors as follows. 
%%%%%%%%%%%%%%%%%%%%%%%%%%
\begin{exa}\label{EXAgap}%
{\rm (Bos-Calvi \cite[Example 4.2]{REFBC-2}})  
Let $X\subset\BB{C}^2$ be the plane curve defined by 
$y-x^2-x^6=0$. 
Take two local parametrisations of $X$ at $\BS{0}$:
\begin{gather*}
\varphi:\BB{C}_{\BS{0}}\longrightarrow\BB{C}_{\BS{0}}^2,\quad
s\longmapsto\BS{\Phi}({s}):=(s,\, s^2+s^6),
\\
\psi:\BB{C}_{\BS{0}}\longrightarrow\BB{C}_{\BS{0}}^2,\quad
t\longmapsto\BS{\Psi}({t})
:=\left(t+t^2,\, (t+t^2)^2+(t+t^2)^6\right)
.\end{gather*} 
Then, putting $\sigma:=s_{\BS{0}}\DA,\ 
\tau:=t_{\BS{0}}\DA$, we have 
$$
\BB{C}[\BS{\Phi}]^2_{\BS{0}}\DA
=\Span_{\BB{C}}(1,\sigma,\sigma^2,\sigma^3,\sigma^4,\sigma^6)
,\qquad
\BB{C}[\BS{\Psi}]^2_{\BS{0}}\DA
=\Span_{\BB{C}}(1,\tau,\tau^2,\tau^3,\tau^4,\tau^6)
.$$
These lack the degree 5 term and are not $D$-invariant. 
We can confirm that 
$$
\TAY_{\BS{0}}^{\varphi,2}(x^5|_X)=0,\qquad
\TAY_{\BS{0}}^{\psi,2}(x^5|_X)=5(y-x^2)|_X
.$$ 
This means that two different local parametrisations 
sometimes define 
different Taylor projectors and that our Taylor projector does 
not necessarily coincide with the ordinary Taylor projector. 
%\vspace*{1ex}
\end{exa}
%%%%%%%%%%%%%%%%%%%%%%%%%%%%%%%%%%%
\begin{defin}
Let us put 
\begin{gather*}
\lambda_{\BS{\Phi}}(d):=\max\left\{k:\,
\bigoplus_{i=0}^{k}\FRAC{\FRAK{m}_{n,\BS{b}}^i}%
{\FRAK{m}_{n,\BS{b}}^{i+1}}
\subset\BB{C}[\BS{\Phi}]^d_{\BS{b}}\DA\right\}
=
\max\left\{k:\,\BB{C}[\BS{\tau}]^k
\subset\BB{C}[\BS{\Phi}]^d_{\BS{b}}\DA\right\}
\\
=
\max\left\{k:\,
\left(\BB{C}[\BS{\Phi}]^d_{\BS{b}}\DA\right)^{\bot_n}
\subset\FRAK{m}_{n,\BS{b}}^{k+1}\right\}
=
\max\left\{k :\, \CAL{O}_{X,\BS{a}}\subset
P^d(X_{\BS{a}})+\FRAK{m}_{X,\BS{a}}^{k+1}\right\}
.\end{gather*}
This is independent of the parametrisation $\BS{\Phi}$. 
\end{defin}
%%%%%%%%%%%%%%%%%
\begin{prop}\label{PROinclusions}
We have $\lambda_{\BS{\Phi}}(d)\ge d$. 
%Hence $\BB{C}[\BS{\Phi}]^d_{\BS{b}}\DA\cap\BB{C}[\BS{\tau}]^{d+1}$ 
%is a $D$-invariant subspace. 
\end{prop}
% \vspace*{2ex}
%%%%%%%%%%%%%%%%%%%%

\begin{proof}
Since $\BS{\Phi}$ generates $\FRAK{m}_{n,\BS{b}}$, all 
$\tau_1,\dots,\tau_n$ appear in the linear terms of 
$\BB{C}[\BS{\Phi}]^1$. 
Hence all terms of $\BB{C}[\BS{\tau}]^d$ appear 
in $\BB{C}[\BS{\Phi}]^d_{\BS{b}}\DA$. 
%This proves the first assertion. 
%Since the partial derivatives of 
%an element of $\BB{C}[\BS{\tau}]^{k+1}$ $(k\le d)$ 
%belongs to $\BB{C}[\BS{\tau}]^k$, 
%it belongs to $\BB{C}[\BS{\Phi}]^d_{\BS{b}}\DA$ also. 
\end{proof}
%%%%%%%%%%%%%%%%%
%\begin{prop}\label{PROPcomparison}

By the inequality $\lambda_{\BS{\Phi}}(d)\ge d$, 
we have the following formal error bound for our 
$\varphi$-Taylor 
polynomial $\TAY_{\BS{a}}^{\varphi,d}(f)$: 
$$
f-\kappa\COMP\TAY_{\BS{a}}^{\varphi,d}(f)
\in
(D_{\BS{a}}^{\varphi,d})^{\bot_{X}}
\subset
\FRAK{m}_{X,\BS{a}}^{\lambda_{\BS{\Phi}}(d)+1}
\quad(f\in\CAL{O}_{X,\BS{a}})
,$$
where $\kappa:\,P^d(X_{\BS{a}})
\longrightarrow\CAL{O}_{X,\BS{a}}$ 
denotes the inclusion. 
This formal error bound is equal to or smaller than that of 
the ordinary Taylor polynomial $\TAY_{\BS{a}}^{*d}(f)$: 
$$
f-\kappa\left(\TAY_{\BS{a}}^{*d}(f)\mod I_{\BS{a}}\right)
\in
\FRAK{m}_{X,\BS{a}}^{d+1}
\quad(f\in\CAL{O}_{X,\BS{a}})
.$$
The author does not know whether 
$$
\forall f\in\CAL{O}_{X,\BS{a}}:\ 
\ord_{X,\BS{a}}
\left(f-\kappa\COMP\TAY_{\BS{a}}^{\varphi,d}(f)\right)
\ge\ord_{X,\BS{a}}\left(f-\kappa
\left(\TAY_{\BS{a}}^{*d}(f)\mod I_{\BS{a}}\right)\right)
$$
holds or not, where $\ord_{X,\BS{a}}$ is the order on 
$X_{\BS{a}}$ defined by 
$$
\ord_{X,\BS{a}}(f)=\max\{k:f\in\FRAK{m}_{X,\BS{a}}^k\}
\quad 
(f\in\CAL{O}_{X,\BS{a}})
.$$
%%%%%%%%%%%%%%%%%%%%%%%%%%
\begin{defin}\label{DEFinvtaylor}
If $\BB{C}[\BS{\Phi}]^d_{\BS{b}}\DA$ is $D$-invariant for all 
(or some by Lemma \ref{LEMcoordtr}) local parametrisation 
$\varphi$, 
we call $\BS{a}=\BS{\Phi}(\BS{b})\in X$ \textit{$D$-invariant 
point of degree $d$.} 
If $\BS{a}$ is $D$-invariant of degree $d$ for all 
$d\in\BB{N}$, 
we call it \textit{$D$-invariant point of degree $\infty$.} 
If the $\varphi$-Taylor projector of degree $d$ at 
$\BS{a}\in X$ is 
independent of the local parametrisation, we call $\BS{a}$ 
\textit{Taylorian of degree $d$} (following Bos and Calvi). 
If $\BS{a}$ is Taylorian of degree $d$ for all $d\in\BB{N}$, 
we call it \textit{Taylorian of degree $\infty$.} 
\end{defin}
%%%%%%%%%%%%%%%%%%%%%%%%%
\begin{prop}\label{PROideal-closed}
Let $X$ be a regular complex submanifold of an 
open subset of $\BB{C}^m$ 
with $\dim X\ge 1$. 
For a point $\BS{a}\in X$, the following conditions (1),\dots,
(4) are equivalent for a fixed $d\in\BB{N}$. 
\begin{enumerate}
\item
$\BS{a}$ is a $D$-invariant point of degree $d$.  
\item
$(\BB{C}[\BS{\Phi}]^d_{\BS{b}}\DA)^{\bot_n}$ is an ideal of 
$\CAL{O}_{n,\BS{b}}$ for all (or some) 
local parametrisation $\varphi$ 
of $X_{\BS{a}}$. 
\item
$\Ker\TAY_{\BS{a}}^{\varphi,d}
=(D_{\BS{a}}^{\varphi,d})^{\bot_X}$ 
is an ideal of $\CAL{O}_{X,\BS{a}}$ for all (or some) local 
parametrisation $\varphi$.  
\item
$D_{\BS{a}}^{\varphi,d}$ is $D$-invariant in $\BB{C}[\BS{\xi}]$ 
for all (or some) local parametrisation $\varphi$. 
\end{enumerate}
If $\BS{b}$ is a bundle point of all the jet spaces of 
$\BB{C}[\BS{\Phi}]^d$, all of these conditions hold. 
\end{prop}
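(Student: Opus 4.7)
The plan is to set up the implications by transferring between three spaces: $\CAL{O}_{n,\BS{b}}$ (where $\BB{C}[\BS{\Phi}]^d_{\BS{b}}\DA$ lives), $\CAL{O}_{X,\BS{a}}$ (which is identified with $\CAL{O}_{n,\BS{b}}$ via $\psi$), and $\CAL{O}_{m,\BS{a}}$ (where $D_{X,\BS{a}}^{\phi,d}$ is dually paired). The equivalence (1) $\Leftrightarrow$ (2) is a direct application of Lemma \ref{LEMdclosed-ideal} to the vector subspace $\BB{C}[\BS{\Phi}]^d \subset \CAL{O}_{n,\BS{b}}$: condition (1) is by definition $D$-invariance of $\BB{C}[\BS{\Phi}]^d_{\BS{b}}\DA$, which is the condition (1) of Lemma \ref{LEMdclosed-ideal}; condition (2) here is its condition (2) verbatim.

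For (2) $\Leftrightarrow$ (3), I would transport everything through the algebra isomorphism $\psi:\CAL{O}_{X,\BS{a}}\longrightarrow\CAL{O}_{n,\BS{b}}$ coming from the local parametrisation. By construction (see Diagram \ref{tab:5}), $S_{X,\BS{a}}^{\phi,d}$ is the pull-back of $S_{n,\BS{b}}^{\phi,d}$ through ${}^s\psi$ on one side and $\psi$ on the other, so $\psi$ carries the orthogonal $(D_{X,\BS{a}}^{\phi,d})^{\bot_X}=\Ker\TAY_{\BS{a}}^{\phi,d}$ onto $(\BB{C}[\BS{\Phi}]^d_{\BS{b}}\DA)^{\bot_n}$. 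Since $\psi$ is an isomorphism of local analytic algebras, the ideal property transfers directly, so (2) and (3) are literally the same condition on isomorphic algebras. The ``for all (or some) $\phi$'' clause is automatic once the equivalence with (1) is established, since (1) is $\phi$-independent by Theorem \ref{THMparam-indep}.

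For (1) $\Leftrightarrow$ (4), the forward direction (1) $\Rightarrow$ (4) is exactly Theorem \ref{THMdmorphism}(2), applied to the surjective homomorphism $\phi:\CAL{O}_{m,\BS{a}}\longrightarrow\CAL{O}_{n,\BS{b}}$ and $Q:=\BB{C}[\BS{\Phi}]^d_{\BS{b}}\DA$. For the converse (4) $\Rightarrow$ (1), I would use the identity $({}^s\phi(Q))^{\bot_m}=\phi^{-1}(Q^{\bot_n})$ established inside the proof of Theorem \ref{THMdmorphism}. If $D_{X,\BS{a}}^{\phi,d}={}^s\phi(Q)$ is $D$-invariant, then Lemma \ref{LEMdclosed-ideal} makes $\phi^{-1}(Q^{\bot_n})$ a proper ideal of $\CAL{O}_{m,\BS{a}}$; since $\phi$ is a surjective ring homomorphism, the direct image $\phi(\phi^{-1}(Q^{\bot_n}))=Q^{\bot_n}$ is likewise a proper ideal of $\CAL{O}_{n,\BS{b}}$, which is (2), hence (1). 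This is the step where I have to be most careful, because the natural move is to apply Lemma \ref{LEMdclosed-ideal} in the ``wrong'' algebra; the surjectivity of $\phi$ is what makes the descent to $\CAL{O}_{n,\BS{b}}$ legal and is really the only subtle point in the argument.

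The final sentence is immediate: Theorem \ref{THMd-closed}, applied to the finite-dimensional subspace $Z:=\BB{C}[\BS{\Phi}]^d\subset\CAL{O}_{n,\BS{b}}$, says that at a bundle point $\BS{b}\in U_Z^{\rm{bdl}}$ (in the sense of Definition \ref{bdle-pt}) the least space $\BB{C}[\BS{\Phi}]^d_{\BS{b}}\DA$ is $D$-invariant, i.e.\ condition (1) holds, hence so do (2), (3), (4). No further obstacle arises here.
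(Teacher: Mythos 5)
Your proposal is correct and follows essentially the same route as the paper: (1)$\Leftrightarrow$(2) via Lemma \ref{LEMdclosed-ideal}, (2)$\Leftrightarrow$(3) by transporting through the algebra isomorphism $\psi$, (1)$\Leftrightarrow$(4) via Theorem \ref{THMdmorphism} and the identity $({}^s\phi(Q))^{\bot_m}=\phi^{-1}(Q^{\bot_n})$ from its proof, and the final sentence via Theorem \ref{THMd-closed}. The only notable difference is that you spell out the (4)$\Rightarrow$(1) direction (including the descent of the proper-ideal property along the surjection $\phi$), which the paper leaves implicit under its terse citation of Theorem \ref{THMdmorphism}; this is a useful clarification but not a different method.
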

%%%%%%%%%%%%%%%%%%%%%%%%%%%

\begin{proof}
Note that the condition of bundle point is independent 
of the local 
parametrisation (Theorem \ref{THMintrinsic} 
or Lemma \ref{LEMcoordtr}). 
The equivalence 
$(1)\Longleftrightarrow(2)$ is proved in 
Theorem \ref{LEMdclosed-ideal}. 
Since the image or the inverse image of an ideal by 
a ring epimorphism is an ideal, the equivalences 
$(2)\Longleftrightarrow(3)$ holds. 
The equivalence (1)$\Longleftrightarrow$(4) follows from 
Theorem \ref{THMdmorphism}, (2). 
The last assertion on a bundle point follows from 
Theorem \ref{THMintrinsic}. 
\end{proof}
%%%%%%%%%%%%%%%%%%%%%%%%%
\begin{exa}\label{EXAdinv-but-notbdle}
Let us take the global parametrisation 
$\BS{\Phi}:=(s,\,t,\,t^2+st^2,\,t^3)$ of the surface 
$X:=\{(x,\,y,\,z,\,w):\,z=y^2+xy^2,\,y^2=w\}\subset\BB{C}^4$. 
Then $\BB{C}[\BS{\Phi}]^1$ coincides with $Z$ in 
Example \ref{EXA1}. 
All points of the form $(a,\,0)$ $(a\neq -1)$ are 
$D$-invariant ones of 
degree 1 but they are not bundle points of 1-jets of 
$\BB{C}[\BS{\Phi}]^1$. 
\end{exa}
%%%%%%%%%%%%%%%%%%%%%%%%%%

For plane algebraic curves, Bos and Calvi proves that 
$D$-invariance 
(gap-free property) is equivalent to the condition that 
the $\varphi$-Taylor projector is independent of 
the local parametrisation 
(\cite[Theorem 3.4, 4.10]{REFBC-2}). 
Unfortunately this can not be generalised. 
Taylorian property fails in the most simple 2-dimensional 
example as follows. 
%%%%%%%%%%%%%%%%%%%%%%%%
\begin{exa}\label{EXAlintr}
Let us recall the surface $X\subset\BB{C}^3$ defined by 
$x_3=x_2^2$ in Example \ref{EXAdifference}. We have seen that 
two local parametrisation $\varphi$ and $\psi$. 
Although they are related by a linear transformation, 
their sets of Bos-Calvi tangents of order 1 are different:
$$
 (D_{\BS{a}}^{\varphi,1})^{\bot_X\bot_X} 
=
D_{\BS{a}}^{\varphi,1}
\neq
D_{\BS{a}}^{\psi,1}
=
 (D_{\BS{a}}^{\psi,1})^{\bot_X\bot_X} 
\quad(\BS{a}\in X)
.$$
Then the kernels $(D_{\BS{a}}^{\varphi,1})^{\bot_X}$ and 
$(D_{\BS{a}}^{\varphi,1})^{\bot_X}$ of the 
Taylor projectors are different. 
Hence, no point of $X$ are Taylorian although they are 
all bundle points. 
\end{exa}
%%%%%%%%%%%%%%%%%%%%%
\begin{rem}\label{REMgen}
For a general $D$-invariant point, we can only say 
that our Taylor 
projector of order $d$ defines a structure of 
an Artinian algebra on 
$P^d(X_{\BS{a}})$.  This structure is isomorphic to $\mathbb{C}\{\boldsymbol{t}\}/%
(\mathbb{C}[\boldsymbol{\Phi}]_{\boldsymbol{b}}^d\DA)^{\bot_n}$, 
which is transformed by the isomorphism described in Theorem 
\ref{THMvector} as a contravariant tensor by a change of the local parametrisation. 
\end{rem}
%%%%%%%%%%%%%%%%%%%%%%%%%%%%%%%%%%%%%%%%
\section{Taylorian property of points on 
embedded curves}\label{section-curves}
%%%%%%%%%%%%%%%%%%%%%%%%%%%%%%%
If we restrict ourselves to the case of embedded 
analytic curves, the Taylor projector is well-defined 
at a general 
point, which generalise Bos and Calvi \cite{REFBC-2}, 
Theorem 3.4 on plane algebraic curves. 

%%%%%%%%%%%%%%%%%%%%%%%
\begin{thm}\label{THMequivalence}
Let $X$ be a $1$-dimensional regular complex submanifold of a 
neighbourhood of $\BS{a}\in\BB{C}^m$ and let 
$\BS{\Phi}=(\Phi_1,\dots,\Phi_m):\,\BB{C}_{\BS{b}}
\longrightarrow \BB{C}_{\BS{a}}^m$ be its local 
parametrization. 
Then, for any fixed $d\in\BB{N}$, the following 
three properties 
on $\BS{a}\in X$ are equivalent.
\begin{enumerate}
\item
$\BS{a}$ is a bundle point of all the jet spaces of 
$\BB{C}[\BS{\Phi}]_{\BS{b}}^d\DA$. 
\item
The powers of monomials appearing in 
$\BB{C}[\BS{\Phi}]^d_{\BS{b}}\DA\subset\BB{C}[{\tau}]$ 
form a \textit{gap-free} sequence i.e. $\BS{a}$ is 
$D$-invariant 
point of degree $d$ 
\item
$\BS{a}$ is Taylorian of degree $d$ 
(see Definition \ref{DEFinvtaylor}). 
\end{enumerate}
\end{thm}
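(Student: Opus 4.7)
The plan is to exploit that $\dim X = 1$, so that $\CAL{O}_{n,\BS{b}} = \BB{C}\{t-b\}$ is a discrete valuation ring and the least part of every nonzero $f$ is a scalar multiple of a single monomial $\tau^{\ord_{\BS{b}} f}$. Consequently $\BB{C}[\BS{\Phi}]^d_{\BS{b}}\DA = \Span(\tau^{e_1},\ldots,\tau^{e_k})$ for a unique strictly increasing sequence of exponents $0 = e_1 < e_2 < \cdots < e_k$, where $k := \dim P^d(X_{\BS{a}})$ and $0 \in E := \{e_1,\ldots,e_k\}$ because $1 \in \BB{C}[\BS{\Phi}]^d$. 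Gap-freeness in (2) is then exactly $e_i = i-1$. For $(1) \Rightarrow (2)$ I apply Theorem \ref{THMd-closed} to deduce that $\BB{C}[\BS{\Phi}]^d_{\BS{b}}\DA$ is $D$-invariant; in one variable this forces $E$ to be downward-closed in $\BB{N}_0$, hence $E = \{0,1,\ldots,k-1\}$ by cardinality. For $(2) \Rightarrow (1)$ I use Theorem \ref{THMgraded-space} applied to $\BB{C}[\BS{\Phi}]^d \cap \FRAK{m}_{n,\BS{b}}^{j+1}$ to obtain $\dim R_j(\BS{b}) = \#\{i : e_i \le j\} = \min(j+1,k)$; since trivially $\dim R_j(\BS{b}') \le \min(j+1,k)$ at any $\BS{b}'$, the point $\BS{b}$ attains the maximum $r_j$ for every $j$, so $\BS{b} \in U_Z^{\rm{bdl}}$ by Corollary \ref{CORbdl-nbd}.

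For $(2) \Rightarrow (3)$: under gap-freeness $\bigl(\BB{C}[\BS{\Phi}]^d_{\BS{b}}\DA\bigr)^{\bot_n} = \FRAK{m}_{n,\BS{b}}^k$, a principal ideal. Transporting through the isomorphism $\phi$ gives $\Ker \TAY_{\BS{a}}^{\phi,d} = \FRAK{m}_{X,\BS{a}}^k$, which is intrinsic to the germ $X_{\BS{a}}$. Both the image $P^d(X_{\BS{a}})$ and this kernel are intrinsic, and the retraction property of Proposition \ref{PROprojector}(3) forces the direct sum $\CAL{O}_{X,\BS{a}} = P^d(X_{\BS{a}}) \oplus \FRAK{m}_{X,\BS{a}}^k$; hence $\TAY_{\BS{a}}^{\phi,d}$ is uniquely determined as the projection onto $P^d(X_{\BS{a}})$ along this decomposition, and is therefore independent of $\phi$.

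The main obstacle is $(3) \Rightarrow (2)$, which I argue by contrapositive. Suppose (2) fails; let $i$ be the smallest index with $e_{i+1} > e_i + 1$ and set $g_0 := e_i + 1$, $e^* := e_{i+1} \in E$. By Proposition \ref{PROinclusions} one has $\{0,\ldots,d\} \subset E$, so $g_0 > d$ and $\alpha := e^* - g_0 + 1 \ge 2$. Take the local biholomorphism $\BS{\Theta}(s) := s + s^{\alpha}$ and introduce a second parametrisation $\BS{\Psi}$ by $\BS{\Phi} = \BS{\Psi} \circ \BS{\Theta}$, so that $\phi(f) = \psi(f) \circ \BS{\Theta}$ for every $f$. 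Because $\BS{\Theta}$ preserves vanishing orders, $\BB{C}[\BS{\Psi}]^d_{\BS{b}}\DA$ has the same exponent set $E$. Pick $f \in \CAL{O}_{X,\BS{a}}$ with $\psi(f) = t^{g_0}$; since $g_0 \notin E$, $\psi(f) \in \bigl(\BB{C}[\BS{\Psi}]^d_{\BS{b}}\DA\bigr)^{\bot_n}$ and hence $\TAY_{\BS{a}}^{\psi,d}(f) = 0$. However
$$
\phi(f) = \BS{\Theta}(s)^{g_0} = s^{g_0} + g_0\, s^{e^*} + (\text{higher order}),
$$
whose coefficient $g_0$ at the occupied exponent $e^* \in E$ is nonzero, so $\phi(f) \notin \bigl(\BB{C}[\BS{\Phi}]^d_{\BS{b}}\DA\bigr)^{\bot_n}$ and $\TAY_{\BS{a}}^{\phi,d}(f) \ne 0$. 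The two projectors disagree, contradicting the Taylorian property.
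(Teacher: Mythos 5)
Your proof is correct and covers all three implications, but you diverge from the paper's route at two places. For $(1)\Leftrightarrow(2)$ you cleanly invoke Theorem~\ref{THMd-closed} (bundle implies $D$-invariant) for the forward direction, and for the reverse you compute $\dim R_j(\BS{b})=\#\{i:e_i\le j\}$ via Theorem~\ref{THMgraded-space} applied to $\BB{C}[\BS{\Phi}]^d\cap\FRAK{m}_{n,\BS{b}}^{j+1}$ and compare to the trivial bound $\min(j+1,k)$; the paper argues the same equivalence in a terser semicontinuity style. The main genuine difference is in $(3)\Rightarrow(2)$. The paper fixes the universal coordinate change $\BS{\Theta}(t')=t'+t'^2$ and then builds a bespoke test function $h := x_j^s - \sum c_i g_{s+i}$ that clears out every occupied exponent in the block $s+1,\dots,s+l$ above the maximal gap $s$, so that $\phi(h)\in(\BB{C}[\BS{\Phi}]^d_{\BS{b}}\DA)^{\bot_n}$ while $\psi(h)$ picks up a nonzero coefficient at $\tau'^{s+1}$. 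You invert the trade-off: you keep the test function maximally simple, $\psi(f)=t^{g_0}$ with $g_0$ the first missing exponent, and instead tailor the coordinate change to $\BS{\Theta}(s)=s+s^\alpha$ with $\alpha=e^*-g_0+1\ge 2$ so that the binomial expansion of $(s+s^\alpha)^{g_0}$ deposits the nonzero coefficient $g_0$ exactly at the occupied exponent $e^*=e_{i+1}$. Both are valid; your version avoids the elimination construction at the cost of a non-quadratic $\BS{\Theta}$, and makes the mechanism (pushing a gap onto a non-gap) more transparent. Your $(2)\Rightarrow(3)$, identifying $\Ker\TAY^{\phi,d}_{\BS{a}}$ with the intrinsic ideal $\FRAK{m}_{X,\BS{a}}^k$ and reading off independence from the direct-sum decomposition $\CAL{O}_{X,\BS{a}}=P^d(X_{\BS{a}})\oplus\FRAK{m}_{X,\BS{a}}^k$, is essentially the paper's argument packaged more compactly.
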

%%%%%%%%%%%%%%%%%%%%%%%

\begin{proof} 
The degree $k$ part of 
$\BB{C}[\BS{\Phi}]_{\BS{t}}^d\DA$ is 0-dimensional 
or 1-dimensional for any $k\in\BB{N}_0$. 
Suppose that $\BS{a}$ satisfies the condition (1), 
then the degree $k$ part of 
$\BB{C}[\BS{\Phi}]_{\BS{t}}^d\DA$ is constant 
dimensional in a neighbourhood of $\BS{a}$ for 
each $k$. If degree $k$ part is 
0-dimensional on a neighbourhood $V$, 
the parts with higher degrees are all 0 on $V$. Then (2) holds. 

Since $\dim_{\BB{C}}\BB{C}[\BS{\Phi}]_{\BS{b}}^d\DA=
\dim_{\BB{C}}\BB{C}[\BS{\Phi}]_{\BS{a}}^d=d+1$, 
the condition (2) implies that, for each $k$ with 
$0\le k\le d+1$, 
there is at least one $f_d\in\BB{C}[\BS{\Phi}]_{\BS{b}}^d$ 
with $f_d^{(k)}(\BS{b})\neq 0$. 
This situation does not change in a neighbourhood, 
which implies (1). 
%(If there was a gap, it would be filled by 
%some element of $\BB{C}[\BS{\Phi}]_{\BS{b}}^d\DA$ at a point 
%arbitrarily near to $\BS{b}$.) 

To prove $(2)\Longrightarrow(3)$, 
recall that every ideal of $\CAL{O}_{X,\BS{a}}\cong\BB{C}\{x\}$ 
is of the form $\FRAK{m}_{X,\BS{a}}^k$, a power of 
the maximal ideal. 
If $\BS{a}$ is a $D$-invariant point of degree $d$, 
$\Ker\TAY_{\BS{a}}^{\varphi,d}=(D_{\BS{a}}^{\varphi,d})^{\bot_X}$ 
is an ideal and it is determined by 
$\dim_{\BB{C}}\CAL{O}_{X,\BS{a}}
/(D_{\BS{a}}^{\varphi,d})^{\bot_X}
=\dim_{\BB{C}}P_{X,\BS{a}}^d$. 
Hence it is independent of $\varphi$ 
and we have 
$\Ker\TAY_{\BS{a}}^{\varphi,d}=\Ker\TAY_{\BS{a}}^{\psi,d}$ 
for any other local parametrisation $\psi$. 
Suppose that $\BS{a}$ is a $D$-invariant point of 
degree $d$ and take 
any $f\in\CAL{O}_{X,\BS{a}}$. Let us put 
$p:=\TAY_{\BS{a}}^{\varphi,d}(f)\in P^d(X_{\BS{a}})$ 
and $q:=\TAY_{\BS{a}}^{\psi,d}(f)\in P^d(X_{\BS{a}})$. 
In view of the retraction property of the projectors, we have 
\begin{gather*}
p-q
=
\TAY_{\BS{a}}^{\varphi,d}\big(\kappa(p)-\kappa(q)\big)
=
\TAY_{\BS{a}}^{\varphi,d}\big((\kappa(p)-f)-(\kappa(q)-f)\big)
\\
\in
\TAY_{\BS{a}}^{\varphi,d}
\left(\Ker\TAY_{\BS{a}}^{\varphi,d}
-\Ker\TAY_{\BS{a}}^{\psi,d}\right)
=
\TAY_{\BS{a}}^{\varphi,d}
\left(\Ker\TAY_{\BS{a}}^{\varphi,d}
-\Ker\TAY_{\BS{a}}^{\varphi,d}\right)
=
\{\BS{0}\}
,\end{gather*}
where 
$\kappa:\,P^d(X_{\BS{a}})\longrightarrow\CAL{O}_{X,\BS{a}}$ 
denotes the inclusion. This proves that 
$\TAY_{\BS{a}}^{\varphi,d}(f)=\TAY_{\BS{a}}^{\psi,d}(f)$. 
Thus $D$-invariance implies Taylorian property. 

To prove the converse $(3)\Longrightarrow(2)$, 
we follow faithfully the idea of Bos-Calvi. 
The least space $\BB{C}[\BS{\Phi}]^d_{\BS{b}}\DA$ is 
a subspace of $\BB{C}[\tau]$. 
Suppose that $\BS{a}$ is not a $D$-invariant point of 
degree $d$. 
Then there exists $s\in\BB{N}$ such that  
$\tau^s\not\in\BB{C}[\BS{\Phi}]^d_{\BS{b}}\DA$ and 
$\tau^{s+1}\in\BB{C}[\BS{\Phi}]^d_{\BS{b}}\DA$. 
Let $s$ be the maximum of such numbers. There exists 
a coordinate $x_j\in\BB{C}[\BS{x}]$ such that 
$\varphi(x_j)_{\BS{b}}\DA=\alpha\tau$ 
with $\alpha\neq 0$, 
otherwise the image of $\varphi$ does not includes 
the elements of order $1$, 
contradicting the retraction property.  
Let $l$ denote the maximal number such that 
$\tau^{s+l}\in\BB{C}[\BS{\Phi}]^d_{\BS{b}}\DA$. 
Then $l\ge 1$ and 
$$
\tau^{s+1},\,\tau^{s+2},\dots,\tau^{s+l}
\in\BB{C}[\BS{\Phi}]^d_{\BS{b}}\DA,
\quad
\tau^{s+l+1},\,\tau^{s+l+2},\dots
\not\in\BB{C}[\BS{\Phi}]^d_{\BS{b}}\DA
$$
by the maximality of $s$. 
The least non-zero monomial appearing in $\varphi(x_j^s)$ 
is $\alpha^st^s$.  
Taking $g_{s+i}\in\BB{C}[\BS{\Phi}]^d$ such that 
$\varphi(g_{s+i})_{\BS{b}}\DA=\tau^{s+i}$ $(i=1,\dots,l)$, 
we can eliminate the monomials of degree 
$s+1,\dots s+l$ appearing 
in $\varphi(x_j^s)$ by subtracting a linear combination 
$c_1\varphi(g_{s+1})+\dots+c_{s+l}\varphi(g_{s+l})$, 
beginning from 
$g_{s+i}$ with smaller $i$. Then if we put 
$$
h:=x_j^s-(c_1g_{s+1}+\dots+c_{s+l}g_{s+l})
,$$
we have 
$$
\varphi(h)=\alpha^st^s+k(t)\cdot t^{s+l+1}
\in(\BB{C}[\BS{\Phi}]^d_{\BS{b}}\DA)^{\bot_n}
$$ 
for some $k(t)\in\CAL{O}_n$. This proves that 
$\TAY_{\BS{a}}^{\varphi,d}(h)=0$. 

Now take another local parametrisation 
$$
\BS{\Psi}(t):=\BS{\Phi}(t'+t'^2)
=\Bigl(\Phi_1(t'+t'^2),\dots,\Phi_n(t'+t'^2)\Bigr)
.$$ 
The function $\psi(h)$ is expressed as 
$$
\psi(h)=\alpha^s(t'+t'^2)^s+k(t'+t'^2)\cdot(t'+t'^2)^{s+l+1}
.$$
Here the coefficient of $t'^{s+1}$ is not $0$ and it follows 
that $S_n\SLFF{\tau'^{s+1}}{\psi(h)}\neq 0$. Since 
$\tau^{s+1}\in\BB{C}[\BS{\Phi}]^d_{\BS{b}}\DA$ implies 
$\tau'^{s+1}\in\BB{C}[\BS{\Psi}]^d_{\BS{b}}\DA$ 
(see the proof of Theorem \ref{THMvector}), we see that 
$\TAY_{\BS{a}}^{\psi,d}(h)\neq 0$. This is inconsistent with 
$\TAY_{\BS{a}}^{\varphi,d}(h)=0$ and proves that $\BS{a}$ 
is not a Taylorian point. 
\end{proof}
%%%%%%%%%%%%%%%%%%%%
\section{Zero estimate and transcendency index}%
\label{section-transcendency}
%%%%%%%%%%%%%%%%%%%%%%
In this final section we recall that the growth of the space of 
Bos-Calvi tangents of dual degree $d$ measures the 
transcendence of the 
embedding of manifold germ $X_{\BS{a}}$. 
In particular, we explain that the $D$-invariance property of 
$X_{\BS{a}}$ 
implies that the embedding of $X_{\BS{a}}$ has not a 
high index of transcendency.\\

First let us recall some known facts 
on zero-estimate on local algebras. 
The following invariant $\theta_{\BS{\Phi}}(d)$ is called 
``$d$-order" by Bos and Calvi in \cite{REFBC-1}. 
It is more important 
than $\lambda_{\BS{\Phi}}(d)\,
\left(\le\theta_{\BS{\Phi}}(d)\right)$ 
defined in \S \ref{section-taylorian}. 
This invariant coincides with one treated by the 
present author in \cite[\S 1]{REFpitman}.  
%%%%%%%%%%%%%%%%%%%%
\begin{defin}[Izumi \cite{REFjacad}, \cite{REFpitman}] 
Let $X_{\BS{a}}$ be a germ of a complex submanifold of 
a neighbourhood 
of $\BS{a}\in\BB{C}^m$ defined by an ideal 
$I\subset\CAL{O}_{X,\BS{a}}$ and 
$\BS{\Phi}:=(\Phi_1,\dots,\Phi_m):\,\BB{C}^n_{\BS{b}}
\longrightarrow \BB{C}_{\BS{a}}^m$ a local parametrisation of 
the germ $X_{\BS{a}}$. 
Let us use the abbreviation $f|_X:=f\mod I$ 
(the restriction of $f\in\CAL{O}_{n,\BS{a}}$ to $X_{\BS{a}}$), 
$\BS{x}|_X:=\{x_1|_X,\dots,x_m|_X\}$ and 
$A:=\CAL{O}_{X,\BS{a}}$. 
The \textit{zero-estimate function} is defined by 
\begin{gather*}
\theta_{A,\,\BS{x}|_X}(d)
=\theta_{\CAL{O}_{n,\BS{b}},\BS{\Phi}}(d)
:=
\max\left\{\deg~p:~p\in\BB{C}%
[\BS{\Phi}]^d_{\BS{b}}\DA\setminus\{0\}\right\}
\\
=\sup\{\ord_{\BS{b}} F\COMP\BS{\Phi}:
F\in \mathbb C[\BS{x}]^d,\ F\COMP\BS{\Phi}\neq 0\}
\\
= \sup\{\ord_{X,\BS{a}} F|_X:
F\in \mathbb C[\BS{x}]^d,\ F|_X\neq 0\}
\end{gather*}
and \textit{the transcendency index} of $\BS{\Phi}$ by 
$$
\alpha(\BS{\Phi})
:=\limsup_{d\to\infty}\log_d\theta_{A,\,\BS{x}|_X}(d)
=\limsup_{d\to\infty}\log_d%
\theta_{\CAL{O}_{n,\BS{b}},\BS{\Phi}}(d)
,$$
where $\ord_{X,\BS{a}}$ is defined before  
Definition \ref{DEFinvtaylor}. 
\end{defin}
%%%%%%%%%%%%%%%%%%%%%%%%%

Note that the values of 
$\theta_{A,\,\BS{x}|_X}(d)
=\theta_{\CAL{O}_{n,\BS{b}},\BS{\Phi}}(d)$ 
are finite. The zero-estimate function $\theta_{\BS{\Phi}}(d)$ 
and the transcendency index 
$\alpha(\BS{\Phi})$ are dependent on the embedding 
$X_{\BS{a}}\subset\BB{C}_{\BS{a}}^m$ 
but they are independent of 
the local parametrisation for a fixed 
$X_{\BS{a}}\subset\BB{C}_{\BS{a}}^m$. 
We know the following. 
%%%%%%%%%%%%%%%%%%%%%
\begin{thm}[Izumi \cite{REFjacad}, 
\cite{REFpitman}]\label{algebraicity} 
Let $X$ be an $n$-dimensional regular complex 
submanifold $(n\ge 1)$ of a 
neighbourhood of $\BS{a}\in\BB{C}^m$ and 
$\BS{\Phi}:\,\BB{C}_{\BS{b}}^n\longrightarrow\BB{C}_{\BS{a}}^m$ 
a local parametrisation of $X$ at $\BS{a}$. 
Then we have 
$$
\theta_{\CAL{O}_{X,\BS{a}},\,\BS{x}|_X}(d)\ge d,
\quad 
\alpha(\BS{\Phi})\ge 1
$$
and the following conditions are equivalent. 
\begin{enumerate}
\item
The germ $X_{\BS{a}}$ is an analytic irreducible component of 
the germ of an algebraic set at $\BS{a}$. 
\item
There exists $a\ge 1$ and $b\ge 0$ such that 
$$
\theta_{\CAL{O}_{X,\BS{a}},\,\BS{x}|_X}(d)
=\theta_{\CAL{O}_{n,\BS{b}},\BS{\Phi}}(d)
\le ad+b\quad (d\in\BB{N})
.$$
\item
There exists $a\ge 1$ and $b\ge 0$ such that 
$$
\BB{C}[\BS{\Phi}]^d\cap\FRAK{m}_{n,\BS{b}}^{ad+b+1}=\{0\}
\quad\mbox{ i.e. }\ 
\BB{C}[\BS{\Phi}]^d_{\BS{b}}\DA
\cap
\FRAC{\FRAK{m}_{n,\BS{b}}^i}{\FRAK{m}_{n,\BS{b}}^{i+1}}
=\{0\}\ (i>ad+b)
.$$
\item
$\alpha(\BS{\Phi})=1.$
\end{enumerate}
\end{thm}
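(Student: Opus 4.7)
My strategy closes the circle $(1) \Rightarrow (2) \Rightarrow (4) \Rightarrow (1)$, treating $(2) \Leftrightarrow (3)$ as a routine reformulation. The equivalence $(2) \Leftrightarrow (3)$ is immediate from the definition of the least space: the condition $\BB{C}[\BS{\Phi}]^d \cap \FRAK{m}_{n,\BS{b}}^{k+1} = \{0\}$ says exactly that no non-zero element of $\BB{C}[\BS{\Phi}]^d$ has order $>k$ at $\BS{b}$, which in turn matches the statement about homogeneous components of $\BB{C}[\BS{\Phi}]^d_{\BS{b}}\DA$ of degree $>k$. The implication $(2) \Rightarrow (4)$ is the trivial estimate $\log_d(ad + b) = 1 + \log_d(a+b/d) \to 1$. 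The two substantive arrows are therefore $(1) \Rightarrow (2)$ and $(4) \Rightarrow (1)$.

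For $(1) \Rightarrow (2)$ I would apply Noether normalization to an algebraic set $V \subset \BB{C}^m$ having $X_{\BS{a}}$ as an analytic irreducible component. After a linear change of coordinates in $\BB{C}^m$, the corresponding prime $\FRAK{p}$ gives a finite integral extension $\BB{C}[y_1, \dots, y_n] \hookrightarrow \BB{C}[V]/\FRAK{p}$ of some bounded degree $\delta$, so each $x_i$ satisfies an integral relation of degree $\le \delta$ over the coordinate subring. Pulling back by $\BS{\Phi}$, any $F\COMP\BS{\Phi}$ with $F \in \BB{C}[\BS{x}]^d$ may be rewritten as a polynomial of degree $O(d)$ in $\phi(y_1), \dots, \phi(y_n)$ modulo the pulled-back relations. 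Because the induced germ $X_{\BS{a}} \to \BB{C}^n$ is a finite morphism, a standard \L ojasiewicz-type bound on the orders $\ord_{\BS{b}}\phi(y_i)$ yields a linear estimate $\theta_{\BS{\Phi}}(d) \le ad + b$.

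For the critical direction $(4) \Rightarrow (1)$, I would combine Theorem \ref{THMgraded-space} with $\alpha(\BS{\Phi}) \le 1$. Fix $\epsilon > 0$; for $d \gg 0$ we have $\theta_{\BS{\Phi}}(d) \le d^{1+\epsilon}$, hence
$$\dim_{\BB{C}} \BB{C}[\BS{\Phi}]^d = \dim_{\BB{C}} \BB{C}[\BS{\Phi}]^d_{\BS{b}}\DA \le \binom{n + \lceil d^{1+\epsilon}\rceil}{n} = O(d^{n(1+\epsilon)}).$$
The subalgebra $A := \BB{C}[\BS{\Phi}] \subset \CAL{O}_{n,\BS{b}}$ is a finitely generated $\BB{C}$-algebra with filtration $A_d := \BB{C}[\BS{\Phi}]^d$, and $A \cong \BB{C}[\BS{x}]/I_X$, where $I_X$ is the ideal of polynomials vanishing on $X_{\BS{a}}$. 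Classical Hilbert-function asymptotics, applied to the associated graded algebra $\bigoplus_d A_d/A_{d-1}$ as a finitely generated graded $\BB{C}$-algebra, show that $\log_d \dim A_d$ tends to the Krull dimension of $A$. Hence $\dim A \le n(1+\epsilon)$ for every $\epsilon > 0$, so $\dim A \le n$, while $X_{\BS{a}} \subset V(I_X)$ forces $\dim V(I_X) \ge n$. Equality, combined with smoothness and irreducibility of $X_{\BS{a}}$, makes $X_{\BS{a}}$ an analytic irreducible component of $V(I_X)_{\BS{a}}$. The main obstacle is the dimension extraction: the filtration $A_d$ is induced from the standard grading on $\BB{C}[\BS{x}]$, not from an $\FRAK{m}$-adic filtration, and relating its polynomial growth rate to $\dim A$ requires passing to the associated graded algebra, invoking its Noether-normalisation and the Hilbert-polynomial theorem, and verifying that subpolynomial slack of the form $d^{n(1+\epsilon)}$ still pins the Krull dimension to the integer $n$.
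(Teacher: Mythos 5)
The paper does not actually prove Theorem \ref{algebraicity}: it is stated as imported from the author's earlier paper \cite{REFjacad}, with the remark that the equivalence $(2)\Longleftrightarrow(3)$ is trivial and that a complete proof appears in \cite{REFpitman}, Theorem 2.3. So there is no in-paper proof to compare against, and your proposal has to stand on its own.

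Your decomposition $(1)\Rightarrow(2)\Rightarrow(4)\Rightarrow(1)$ plus $(2)\Leftrightarrow(3)$ is a sensible structure, and the direction $(4)\Rightarrow(1)$ is the strongest part of the proposal. The chain of inequalities
$$
\dim_{\BB{C}}\BB{C}[\BS{\Phi}]^d
=\dim_{\BB{C}}\BB{C}[\BS{\Phi}]^d_{\BS{b}}\DA
\le
{n+\theta_{\CAL{O}_{n,\BS{b}},\BS{\Phi}}(d)\choose n}
$$
(the first equality is Theorem \ref{THMgraded-space}, the second holds because the least space is graded and its top degree is $\theta(d)$) together with the Hilbert-function asymptotics $\dim_{\BB{C}}\BB{C}[\BS{\Phi}]^d\sim c\,d^{\dim\overline{X}_{\BS{a}}}$ indeed forces $\dim\overline{X}_{\BS{a}}\le n$ when $\alpha(\BS{\Phi})\le 1$, and equality of dimensions with the irreducible smooth germ $X_{\BS{a}}\subset\overline{X}_{\BS{a}}$ makes $X_{\BS{a}}$ an analytic component. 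This is correct, and it uses the paper's machinery (the least-space dimension count) in a clean way.

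The genuine gap is in $(1)\Rightarrow(2)$, at the sentence invoking ``a standard \L ojasiewicz-type bound on the orders $\ord_{\BS{b}}\phi(y_i)$''. Bounding the individual orders of $\phi(y_1),\dots,\phi(y_n)$ (which, for generic linear forms $y_i$, are all just $1$) tells you nothing about $\ord_{\BS{b}}(F\COMP\BS{\Phi})$: the whole difficulty is that a degree-$d$ combination of those coordinate functions can vanish to arbitrarily high order through cancellation, and ruling this out is precisely the content of a zero estimate. What the Noether-normalisation setup actually gives you is a monic integral relation $F^{\delta}+c_{\delta-1}(\BS{y})F^{\delta-1}+\dots+c_0(\BS{y})=0$ in $\BB{C}[V]$ with $\deg c_i\le Cd$ and $c_0\ne 0$ (since $\BB{C}[V]$ is a domain on the component through $\BS{a}$). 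Pulling back and comparing orders of the terms $(F\COMP\BS{\Phi})^i\,c_i(\phi(\BS{y}))$, the vanishing of the sum forces the minimal order to be attained at least twice, which yields $\ord_{\BS{b}}(F\COMP\BS{\Phi})\le\max_i\ord_{\BS{b}}\,c_i(\phi(\BS{y}))\le Cd$; here one uses that the generic $\phi(y_i)$ form a regular system of parameters so that $\ord_{\BS{b}}\,c_i(\phi(\BS{y}))\le\deg c_i$. This Newton-polygon step is the missing core of $(1)\Rightarrow(2)$, and the proposal's appeal to a \L ojasiewicz bound does not supply it.
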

%%%%%%%%%%%%%%%%%%%%
The latter condition of (3) appears for the first time. 
The assertion $(2)\Longleftrightarrow(3)$ is clear from the definition 
of the least part. 
We give $\alpha(\BS{\Phi})$ the name ``transcendency index" 
because of the equivalence $(1)\Longleftrightarrow(4)$. 
It is known that $\alpha(\boldsymbol{\Phi})$ 
is not necessarily an integer in transcendence theory 
(cf. \cite{REFp1}, \cite{REFp2}, \cite{REFpw}, 
see also a beginner's note \cite[Example 3.4]{REFpitman}). 
In \cite{REFjacad} we treat polynomial functions 
on an analytically 
irreducible germ of the analytic subset $X$ of 
an open subset of 
$\BB{C}^m$ and $X$ need not be a smooth manifold. 
The general inequalities 
$\theta_{\CAL{O}_{X,\BS{a}},\,\BS{x}|_X}(d)\ge d$ and 
$\alpha(\BS{\Phi})\ge 1$ follow from 
Proposition \ref{PROinclusions}. 
A complete proof is given in \cite[Theorem 2.3]{REFpitman} 
in a stronger form. 

The situation of this 
theorem may be well illustrated by the following. 

%%%%%%%%%%%%%%%
\begin{exa}[Izumi \cite{REFjacad}]\label{EXAexp}
Let $C$ be the transcendental plane curve defined by $y=e^x-1$. 
If we parametrise this by $\BS{\Phi}=(t,\,e^t-1)$, we have 
$$
\BB{C}[\BS{\Phi}]^d
=\Span_{\BB{C}}\left(\BB{C}[t]^d,\BB{C}[t]^{d-1}e^t,
\BB{C}[t]^{d-2}e^{2t},\dots,
\BB{C}[t]^1e^{(d-1)t},e^{dt}\right)
.$$
This is just the space of solutions of 
the differential equation
$$
D_t^{d+1}
\left(D_t-1\right)^d
\left(D_t-2\right)^{(d-1)}\cdots
\left(D_t-d+1\right)^2
\left(D_t-d\right)^1f=0
\quad(D_t:={d}/{dt})
.$$
By the elementary theory of ordinary differential equations, 
for any $a\in\BB{C}$, there exists a unique solution $f$ with 
$$
f^{(\nu)}(a)=
\begin{cases}
0 &\big(0\le\nu\le(d+1)(d+2)/2-2\big)\\
1 &\big(\nu=(d+1)(d+2)/2-1\big)
\end{cases}
$$ 
and there exists no solution $f\neq 0$ with 
$$
f^{(\nu)}(a)=0\quad\big(0\le\nu\le(d+1)(d+2)/2)-1\big)
.$$
This proves that 
$\theta_{\CAL{O}_{1,a},\,\{t-a,\,\exp(t-a)-1\}}(d)
=(d+1)(d+2)/2-1$ 
and $\alpha=2$ at all points $(a,\,e^a-1)\in C$. 
\end{exa}
%%%%%%%%%%%%%%%
\begin{rem}\label{REMueda}
An example of a plane curve with 
an extremely transcendental point 
($\alpha(\BS{\Phi})=\infty$) is given by Tetsuo Ueda 
(see \cite[Example 2]{REFjacad}), using a gap power series, a 
functional analogue of Liouville constant. 
\end{rem}
%%%%%%%%%%%%%%%%%%%%%

For an embedding germ $X_{\BS{a}}\subset\BB{C}^m$, 
let $\overline{X}_{\BS{a}}$ denote the Zariski closure of 
$X_{\BS{a}}$ in $\BB{C}^m$, namely the smallest algebraic 
subset of $\BB{C}^m$ 
that includes some representative of the germ $X_{\BS{a}}$ 
(germs are always taken with respect to 
the Euclidean topology). Then the Hilbert function of 
$\overline{X}_{\BS{a}}$ is defined by 
\begin{gather*}
\chi(\overline{X}_{\BS{a}},\,d)
:=\dim_{\BB{C}} P^d(X_{\BS{a}})
-\dim_{\BB{C}} P^{d-1}(X_{\BS{a}})
=\dim_{\BB{C}}D_{\BS{a}}^{\varphi,d}-\dim_{\BB{C}}D_{\BS{a}}^{\varphi,d-1}
\\
\left(\dim_{\BB{C}} P^{-1}(X_{\BS{a}})
=\dim_{\BB{C}} D_{\BS{a}}^{\varphi,-1}=0\right)
\end{gather*}
(cf. Definition \ref{DEFbctg}). 
This is known to coincide with a polynomial of degree 
$\dim\overline{X}_{\BS{a}}-1$ in $d$ for sufficiently 
large $d$. 

%%%%%%%%%%%%%%%%%%%%%
\begin{thm}\label{THMinequality}
Let $\varphi:\CAL{O}_{m,\BS{a}}
\longrightarrow\CAL{O}_{n,\BS{b}}$ $(n\ge 1)$ 
be a local parametrisation of an embedded manifold germ 
$X_{\BS{a}}\subset\BB{C}^m$ with component function germs
$\BS{\Phi}=(\Phi_1,\dots,\Phi_m)$. 
Suppose that $\BS{a}$ is a $D$-invariant point of degree $d$, 
we have a zero-estimate inequality: 
$$
{{n+d}\choose{n}}+\theta_{\CAL{O}_{n,\BS{b}},\BS{\Phi}}(d)-d\le
\dim_{\BB{C}}\BB{C}[\BS{\Phi}]^d
=
\sum_{i=0}^d \chi(\overline{X}_{\BS{a}},\,i)
\le
{{m+d}\choose{m}}
.$$
Hence, if $\BS{a}$ is a $D$-invariant point of degree $\infty$, 
we have an estimate of the transcendency index: 
$$
1\le\alpha(\BS{\Phi})\le\dim \overline{X}_{\BS{a}}\le m
.$$ 
\end{thm}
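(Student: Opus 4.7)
The plan is to split the stated chain into three parts: the middle equality, the trivial upper bound, and the substantive lower bound, only the last of which uses $D$-invariance. The transcendency-index statement will then be a routine consequence.

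First I would establish the middle equality and the upper bound. By Lemma \ref{LEMpolynomial}, $\phi$ restricts to an isomorphism $P^d(X_{\BS{a}}) \cong \BB{C}[\BS{\Phi}]^d$, so their dimensions agree. The defining identity $\chi(\overline{X}_{\BS{a}},i) = \dim_{\BB{C}} P^i(X_{\BS{a}}) - \dim_{\BB{C}} P^{i-1}(X_{\BS{a}})$, with $\dim_{\BB{C}} P^{-1}(X_{\BS{a}}) = 0$, telescopes to give the equality $\dim_{\BB{C}}\BB{C}[\BS{\Phi}]^d = \sum_{i=0}^d \chi(\overline{X}_{\BS{a}},i)$. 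The upper bound $\dim_{\BB{C}}\BB{C}[\BS{\Phi}]^d \le \binom{m+d}{m}$ is immediate from $\BB{C}[\BS{\Phi}]^d = \phi(\BB{C}[\BS{x}]^d)$.

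The key step is the lower bound. By Theorem \ref{THMgraded-space}, $\dim_{\BB{C}}\BB{C}[\BS{\Phi}]^d = \dim_{\BB{C}}\BB{C}[\BS{\Phi}]^d_{\BS{b}}\DA$, and the least space is graded by degree in $\BS{\tau}$ since it is spanned by homogeneous least parts. By Proposition \ref{PROinclusions} it contains all of $\BB{C}[\BS{\tau}]^d$, accounting for $\binom{n+d}{n}$ dimensions and forcing $\theta := \theta_{\CAL{O}_{n,\BS{b}},\BS{\Phi}}(d) \ge d$. By the definition of $\theta$ and the grading, there is a nonzero homogeneous $p_\theta \in \BB{C}[\BS{\Phi}]^d_{\BS{b}}\DA$ of degree exactly $\theta$. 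Since $\BS{a}$ is $D$-invariant of degree $d$, the space $\BB{C}[\BS{\Phi}]^d_{\BS{b}}\DA$ is closed under every $\partial/\partial \tau_i$. Picking, at each stage, a variable occurring in the monomial support of the polynomial at hand, iterated partial differentiation of $p_\theta$ produces a nonzero homogeneous element at every degree $k$ with $0 \le k \le \theta$. For $k = d+1,\dots,\theta$, each such element is linearly independent of $\BB{C}[\BS{\tau}]^d$, giving $\theta - d$ additional dimensions. Hence $\dim_{\BB{C}}\BB{C}[\BS{\Phi}]^d_{\BS{b}}\DA \ge \binom{n+d}{n} + \theta - d$.

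For the transcendency-index bound, assume $\BS{a}$ is $D$-invariant of degree $\infty$. Rearranging the lower bound gives
\begin{equation*}
\theta_{\CAL{O}_{n,\BS{b}},\BS{\Phi}}(d) \le \dim_{\BB{C}}\BB{C}[\BS{\Phi}]^d - \binom{n+d}{n} + d,
\end{equation*}
and the Hilbert polynomial identifies $\dim_{\BB{C}}\BB{C}[\BS{\Phi}]^d$ with a polynomial in $d$ of degree $\dim\overline{X}_{\BS{a}}$ for large $d$, while $\binom{n+d}{n}$ has degree $n \le \dim\overline{X}_{\BS{a}}$. So $\theta_{\CAL{O}_{n,\BS{b}},\BS{\Phi}}(d) = O(d^{\dim\overline{X}_{\BS{a}}})$, and taking $\limsup_{d\to\infty}\log_d$ yields $\alpha(\BS{\Phi}) \le \dim\overline{X}_{\BS{a}} \le m$.

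The main obstacle is to ensure that the iterated differentiation really produces a \emph{nonzero} homogeneous element at each intermediate degree, rather than one whose leading component cancels. The grading of $\BB{C}[\BS{\Phi}]^d_{\BS{b}}\DA$ resolves this: each $\partial/\partial \tau_i$ maps the degree-$k$ component into the degree-$(k-1)$ component, and at every step one can select a differentiation variable occurring in some monomial of the current homogeneous polynomial, so the derivative is again homogeneous and nonzero.
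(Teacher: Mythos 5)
Your proof is correct and follows essentially the same route as the paper's: the middle equality by telescoping the Hilbert function, the upper bound by surjectivity of $\phi|_{\BB{C}[\BS{x}]^d}$, and the lower bound by counting the dimension of $\BB{C}[\BS{\Phi}]^d_{\BS{b}}\DA$ as the sum of $\binom{n+d}{n}$ (from Proposition \ref{PROinclusions}) and $\theta-d$ (from iterated partial derivatives of a maximal-degree element, using $D$-invariance). You are somewhat more explicit than the paper about why the iterated differentiation yields a nonzero homogeneous element at each intermediate degree, which is a welcome clarification but not a different method.
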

%%%%%%%%%%%%%%%%%%%%%

\begin{proof} 
Note that 
$$
\dim_{\BB{C}}P^d(X_{\BS{a}})=\dim_{\BB{C}}\BB{C}[\BS{\Phi}]^d
=\dim_{\BB{C}}\BB{C}[\BS{\Phi}]_{\BS{b}}^d\DA
$$
by the isomorphism stated in Definition \ref{DEFbctg}. 
If we take $p\in\BB{C}[\BS{\Phi}]^d_{\BS{b}}\DA$ with 
$\deg p=\theta_{A,\{\Phi_1,\dots,\Phi_m\}}(d)$ 
(the maximal degree), 
the dimension $\dim_{\BB{C}}\BB{C}[\BS{\Phi}]^d_{\BS{b}}\DA$ 
majorises the sum of the dimensions of the 
following linear subspaces: 
\begin{enumerate}
\item
the space 
$\mathbb{C}[\BS{\tau}]^d$ appearted in the proof of 
Proposition \ref{PROinclusions}; 
%$\bigoplus_{i=0}^d{\FRAK{m}_{n,\BS{b}}^i}/{\FRAK{m}_{n,\BS{b}}^{i+1}}$; 
\item
the linear span of 
$$
\left\{
\FRAC{\partial^{|\BS{\nu}|} p}{\partial \BS{\tau}^{\BS{\nu}}}
:\,
d<\ord_{\!\FRAK{m}}\,
\FRAC{\partial^{|\BS{\nu}|} p}{\partial \BS{\tau}^{\BS{\nu}}}
<\infty,\ \BS{\nu}\in\BB{N}_0^n
\right\}
$$
(by $D$-invariance) 
\end{enumerate} 
(see the proof of Proposition \ref{PROinclusions}). 
Since the intersection of these spaces are $\{\BS{0}\}$, 
we have the left inequality of the first. 
The right inequality follows from 
$$
\dim_{\BB{C}}P^d(X_{\BS{a}})\le \dim_{\BB{C}}\BB{C}[\BS{x}]^d
={{m+d}\choose{m}}
.$$ 
The first inequality in the theorem implies that
$$
\theta_{\CAL{O}_{n,\BS{b}},\{ \Phi_1,\dots,\Phi_m \}}(d)
\le \dim_{\BB{C}}\BB{C}[\BS{\Phi}]^d
=\dim_{\BB{C}}P^d(X_{\BS{a}})
=\sum_{i=0}^d \chi(\overline{X}_{\BS{a}},\,i)
.$$
Since the Hilbert function of $\overline{X}_{\BS{a}}$ 
coincide with a polynomial of degree 
$\dim\overline{X}_{\BS{a}}-1$ 
for sufficiently large $d$, the last term is comparable to 
$d^{\,\dim\overline{X}_{\BS{a}}}$ 
and the inequality 
$\alpha(\BS{\Phi})\le\dim \overline{X}_{\BS{a}}$ 
follows. 
\end{proof}

Let $\BS{\Phi}:\,U\longrightarrow\BB{C}^m$ be an embedding 
of an open subset of $\BB{C}^n$ 
onto a submanifold $X\subset\BB{C}^m$ i.e. $\BS{\Phi}$ induces 
a biholomorphic homeomorphism onto the image. 
We have seen that the complement of the set of 
bundle points of all the 
jet spaces of $\BB{C}[\BS{\Phi}]^d$ for all $d\in\BB{N}$ 
is contained in a countable union of thin closed 
analytic subsets of $X$. Then the zero-estimate inequality in 
Theorem \ref{THMinequality} implies the following 
global result. 

%%%%%%%%%%%%%%%%%%%%%%%
\begin{cor}\label{CORtame}
Let $X$ be an $n$-dimensional regular complex submanifold of 
an open subset 
$\Omega\subset\BB{C}^m$ $(n\ge 1)$. 
Then there exists a countable union $A$ of thin closed 
analytic subsets of $X$ such that, for any local 
parametrisation 
$\BS{\Phi}$ at $\BS{a}\in \Omega\setminus A$, we have 
$\alpha(\BS{\Phi})\le\dim \overline{X}_{\BS{a}}\le{m}$. 
Note that the set $A$ is of first category in Baire's sense and 
with Lebesgue measure 0 in $X$. 
\end{cor}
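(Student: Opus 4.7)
The plan is to combine the pointwise bound of Theorem \ref{THMinequality} with the openness/density of the $D$-invariance condition coming from Corollary \ref{CORbdl-nbd} and Theorem \ref{THMd-closed}. Since Theorem \ref{THMinequality} already establishes $\alpha(\BS{\Phi})\le\dim\overline{X}_{\BS{a}}\le m$ at every point which is $D$-invariant of degree $\infty$, it will suffice to exhibit the set of points of $X$ failing this property as a countable union of thin closed analytic subsets.

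First I would pick a countable open covering $\{V_j\}_{j\in\BB{N}}$ of $X$ such that each $V_j$ is the image of a biholomorphic local parametrisation $\BS{\Phi}_j:U_j\to V_j$, with $U_j$ a connected open subset of $\BB{C}^n$. For each $(j,d)\in\BB{N}\times\BB{N}$, the finite-dimensional vector subspace $Z_{j,d}:=\BB{C}[\BS{\Phi}_j]^d\subset\CAL{O}_n(U_j)$ has, by Corollary \ref{CORbdl-nbd}, a non-empty analytically open subset $U_{Z_{j,d}}^{\rm{bdl}}\subset U_j$ of bundle points of all its jet spaces; its complement $B_{j,d}$ is therefore a thin closed analytic subset of $U_j$. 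Theorem \ref{THMd-closed} ensures that at every point of $U_{Z_{j,d}}^{\rm{bdl}}$ the least space $\BB{C}[\BS{\Phi}_j]^d_{\BS{b}}\DA$ is $D$-invariant, and by Theorem \ref{THMparam-indep} this property is intrinsic to the embedding germ at $\BS{a}=\BS{\Phi}_j(\BS{b})$.

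Next I would form the biholomorphic images $A_{j,d}:=\BS{\Phi}_j(B_{j,d})\subset V_j$, each a thin closed analytic subset of $X$, and set $A:=\bigcup_{j,d\in\BB{N}} A_{j,d}$. For any $\BS{a}\in X\setminus A$, selecting some $j$ with $\BS{a}\in V_j$ shows that $\BS{a}$ is $D$-invariant of degree $d$ for every $d$, hence of degree $\infty$; Theorem \ref{THMinequality} then delivers the desired bound on $\alpha(\BS{\Phi})$ for any local parametrisation $\BS{\Phi}$ at $\BS{a}$, using that $\alpha$ is itself intrinsic to the embedding germ. The Baire category and Lebesgue measure-zero statements follow because each thin closed analytic subset of $X$ is nowhere dense and has $(2n)$-dimensional Lebesgue measure zero, so that a countable union of them is of first category and of Lebesgue measure zero.

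I do not anticipate a serious obstacle. The only mild subtlety is that the sets $A_{j,d}$ are constructed via specific parametrisations $\BS{\Phi}_j$, while the conclusion must hold for arbitrary local parametrisations at $\BS{a}$; this is handled automatically by the parametrisation-independence of both the $D$-invariance property (Theorem \ref{THMparam-indep}) and of the transcendency index $\alpha$. One might also worry that the thin sets $A_{j,d}$ could accumulate badly as $d\to\infty$, but no refinement is required: being a countable union of nowhere dense analytic sets already yields first category and measure zero, which is precisely the asserted conclusion.
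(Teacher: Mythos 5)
Your argument is correct and follows the paper's own route: combine the pointwise bound of Theorem \ref{THMinequality} at $D$-invariant points of degree $\infty$ with the fact that bundle points (hence $D$-invariant points, by Theorem \ref{THMd-closed} and Corollary \ref{CORbdl-nbd}) form a non-empty analytically open set for each $d$, taking a countable union of the thin complements over $d$. You are in fact slightly more careful than the paper's terse discussion preceding the corollary, which assumes a global parametrisation of $X$; your countable atlas handles the general case, and since the bundle-point condition is parametrisation-independent (Lemma \ref{LEMcoordtr}), the set $\bigcup_j A_{j,d}$ is genuinely the set of non-bundle points of degree $d$ and hence a thin closed analytic subset of $X$.
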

%%%%%%%%%%%%%%%%%%%%%%%%%%%
\begin{rem}
Gabrielov gives a zero-estimate 
\cite[Theorem 5]{REFgabrielov} of 
Noetherian functions on an integral curve of a 
Noetherian vector field 
(see \cite{REFgk} also). 
It yields a zero-estimate of Noetherian functions on $\BB{C}^n$ 
immediately as follows. Suppose that 
$$
\BS{\Psi}:=
\{\BS{x},\,\BS{\Phi}\}\subset\CAL{O}_{n,\BS{b}}
$$
is a join of an affine coordinate system 
$\BS{x}:=(x_1,\dots,x_n)$ 
and a Noetherian chain
$\BS{\Phi}:=\{\Phi_1,\dots,\Phi_m\}$ of order $m$, 
which means that 
$$
\FRAC{\partial \Phi_i}{\partial x_j}
=P_{ij}(x_1,\dots,x_n,\Phi_1,\dots,\Phi_m)
\quad (i=1,\dots,m;\ j=1,\dots,n)
$$
for some polynomials $P_{ij}$. 
This $\BS{\Psi}$ is the set of the 
mapping components of the embedding onto the graph 
$X\subset\BB{C}^{m+n}$ 
of the Noetherian chain $\{\Phi_1,\dots,\Phi_m\}$. Then we have 
$$
\alpha(\BS{\Psi})\le 2(m+n)
$$ 
(cf. \cite[Corollary 12]{REFzero-estimate}). 

Our Corollary \ref{CORtame} gives a slightly stronger estimate 
$$
\alpha(\BS{\Psi})\le \dim\overline{X}_{\BS{a}}\le m+n
$$ 
for (only for) a complement of a small subset of $X$ without 
the Noetherian condition. 
In view of Remark \ref{REMueda}, 
exclusion of some point set is inevitable 
for our general analytic case. 
\end{rem}
%%%%%%%%%%%%%%%%%%%%%%%%%%%
\begin{prop}\label{EXAaffine-eq}
Let $X_{\BS{a}}\subset\BB{C}^m$ and 
$X'_{\BS{a}'}\subset\BB{C}^m$ be affine 
equivalent germs of embedded manifolds, i.e. 
there exists an affine 
transformation $\Theta:\,\BB{C}^m\longrightarrow\BB{C}^m$ 
which maps 
$X_{\BS{a}}\subset\BB{C}^m$ to $X'_{\BS{a}'}\subset\BB{C}^m$ 
biholomorphically. Then $X_{\BS{a}}\subset\BB{C}^m$ 
and $X'_{\BS{a}'}\subset\BB{C}^m$ coincide to have 
the properties of 
bundle point, $D$-invariance, Taylorian and they have the same 
$\theta(d)$ and $\alpha$. 
\end{prop}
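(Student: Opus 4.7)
The plan is to exhibit a canonical pairing of local parametrisations of $X_{\BS{a}}$ and $X'_{\BS{a}'}$ under which the ambient subspaces of $\CAL{O}_{n,\BS{b}}$ literally coincide, then invoke the results already in the paper. Write the affine map as $\Theta(\BS{x})=A\BS{x}+\BS{c}$ with $A$ invertible. If $\BS{\Phi}\colon\BB{C}^n_{\BS{b}}\to\BB{C}^m_{\BS{a}}$ is any local parametrisation of $X$ at $\BS{a}$, then $\BS{\Psi}:=\Theta\COMP\BS{\Phi}$ is a local parametrisation of $X'$ at $\BS{a}'$, and since $\Theta$ is a biholomorphism of $\BB{C}^m$ every local parametrisation of $X'$ at $\BS{a}'$ is obtained this way. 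At the algebra level the induced homomorphism factors as $\psi=\phi\COMP\theta$, where $\theta\colon\CAL{O}_{m,\BS{a}'}\to\CAL{O}_{m,\BS{a}}$ is the pullback $g\mapsto g\COMP\Theta$.

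The only nontrivial observation is that because $\Theta$ is affine, $\theta$ preserves polynomial total degree, so $\theta(\BB{C}[\BS{x}]^d)=\BB{C}[\BS{x}]^d$. Therefore
$$\BB{C}[\BS{\Psi}]^d=\psi(\BB{C}[\BS{x}]^d)=\phi\bigl(\theta(\BB{C}[\BS{x}]^d)\bigr)=\phi(\BB{C}[\BS{x}]^d)=\BB{C}[\BS{\Phi}]^d$$
as subspaces of $\CAL{O}_{n,\BS{b}}$. This literal equality of filtered subspaces transfers immediately every notion defined purely in terms of the sequence $\{\BB{C}[\BS{\Phi}]^d\}\subset\CAL{O}_{n,\BS{b}}$: the least spaces $\BB{C}[\BS{\Phi}]^d_{\BS{b}}\DA$ and $\BB{C}[\BS{\Psi}]^d_{\BS{b}}\DA$ agree, hence so do the $D$-invariance property (Proposition \ref{PROideal-closed}), the bundle-point property (Definition \ref{bdle-pt}), the zero-estimate function $\theta_{\CAL{O}_{n,\BS{b}},\BS{\Phi}}(d)$, and consequently $\alpha(\BS{\Phi})=\alpha(\BS{\Psi})$.

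For the Taylorian property one needs a small extra step, because it concerns independence of the projector $\TAY_{\BS{a}}^{\phi,d}\colon\CAL{O}_{X,\BS{a}}\to P^d(X_{\BS{a}})$ as $\phi$ varies, and $\CAL{O}_{X,\BS{a}}$ and $\CAL{O}_{X',\BS{a}'}$ are a priori distinct algebras. Here I would use the canonical algebra isomorphism $\bar{\theta}\colon\CAL{O}_{X',\BS{a}'}\to\CAL{O}_{X,\BS{a}}$ that $\Theta$ induces, which restricts to a linear isomorphism $P^d(X'_{\BS{a}'})\to P^d(X_{\BS{a}})$ (since $\bar{\theta}$ preserves the total degree of the restricted polynomial functions). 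Tracing the diagram of sesquilinear pairings of \S\ref{section-taylorian} one obtains the intertwining relation $\TAY_{\BS{a}'}^{\psi,d}=\bar{\theta}^{\,-1}\COMP\TAY_{\BS{a}}^{\phi,d}\COMP\bar{\theta}$, so independence of $\TAY_{\BS{a}'}^{\psi,d}$ from $\psi$ is equivalent to independence of $\TAY_{\BS{a}}^{\phi,d}$ from $\phi$.

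The main (if mild) obstacle is verifying this intertwining relation; it reduces to checking that the adjoint of $\bar{\theta}$ sends $D_{X',\BS{a}'}^{\psi,d}$ to $D_{X,\BS{a}}^{\phi,d}$, which in turn follows from the factorisation $\psi=\phi\COMP\theta$ and the equality $\BB{C}[\BS{\Psi}]^d_{\BS{b}}\DA=\BB{C}[\BS{\Phi}]^d_{\BS{b}}\DA$ already established, via ${}^{s}\psi={}^{s}\theta\COMP{}^{s}\phi$ together with the fact that ${}^{s}\theta$ restricts to a linear isomorphism between the relevant spaces of Bos-Calvi tangents. Once this is in hand every assertion of the proposition follows immediately from the constructions already performed in the paper.
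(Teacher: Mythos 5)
Your proposal is correct and follows the same route as the paper: compose the affine transformation $\Theta$ with a local parametrisation, note that affinity gives the literal equality $\BB{C}[\BS{\Phi}]^d=\BB{C}[\BS{\Psi}]^d$ inside $\CAL{O}_{n,\BS{b}}$, and transfer every notion defined from that filtered subspace. The paper dispatches the Taylorian case with ``This implies everything''; your intertwining relation $\TAY_{\BS{a}'}^{\psi,d}=\bar{\theta}^{\,-1}\COMP\TAY_{\BS{a}}^{\phi,d}\COMP\bar{\theta}$ is a correct and useful elaboration of that hidden step.
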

%%%%%%%%%%%%%%%%%%%%%%%%%

\begin{proof}
Let $\BS{\Phi}$ be a local parametrisation of $X_{\BS{a}}$. 
Then $\BS{\Phi}':=\Theta\COMP\BS{\Phi}$ is a 
local parametrisation of $X'_{\BS{a}'}$. 
Since $\Theta$ is affine, 
$\BB{C}[\BS{\Phi}]^d=\BB{C}[\BS{\Phi}']^d$ and 
hence $\BB{C}[\BS{\Phi}]^d_{\BS{b}}\DA
=\BB{C}[\BS{\Phi}']^d_{\BS{b}}\DA$. 
This implies every thing. 
\end{proof}
%%%%%%%%%%%%%%%%%%%%%%%%%
It is easy to see that germs of a quadratic curve 
in $\BB{C}^2$ at any pair of points are affine equivalent. 
Of course $\alpha=1$ in this cases because they are algebraic. 
It is interesting that the germs at points on 
the transcendental curve 
$y=\exp x-1$ are also affine equivalent. 
In this case $\theta(d)$ 
and $\alpha$ are already given in Example \ref{EXAexp}. 
%%%%%%%%%%%%%%%%%%%%

\

{\bf\small Acknowledgements}
The author thanks Tohru Morimoto for invaluable suggestions 
and Takashi Aoki for computation in examples 
using a certain computer algebra system. 
The author is also grateful to Yutaka Matsui and 
Yayoi Nakamura for helpful discussions. 
%This study could not be completed without the physical 
%help of Doctor Kotaro Kitani through his skill. 
%%%%%%%%%%%%%%%%%%%%%%%%%%

%%%%%%%%%%%%%%%%%%%%%%%%%%%%%%
%%%%%%%%%%%%%%%%%%%%%%%
\end{document}